\documentclass[a4paper,reqno,11pt,oneside]{amsart}
\usepackage{amsmath,geometry}
\usepackage{graphicx}
\usepackage{mathrsfs}
\usepackage{amssymb,amsmath, amsfonts, amsthm}
\usepackage{latexsym}
\usepackage{color} 
\usepackage[dvips]{epsfig}
\usepackage[colorlinks]{hyperref}
\usepackage{pgfplots}
\usepackage{bm}
\usepackage{a4wide}
\usepackage[numbers,sort&compress]{natbib}

\usepackage{color}

\numberwithin{equation}{section}
\usepackage{mathrsfs}
\usepackage{amsfonts}
\usepackage{amsmath}
\usepackage{stmaryrd}
\usepackage{amssymb}
\usepackage{amsthm}
\usepackage{mathrsfs}
\usepackage{url}
\usepackage{amsfonts}
\usepackage{amscd}
\usepackage{indentfirst}
\usepackage{enumerate}
\usepackage{amsmath,amsfonts,amssymb,amsthm}
\usepackage{amsmath,amssymb,amsthm,amscd}
\usepackage{graphicx,mathrsfs}
\usepackage{appendix}
\usepackage[numbers,sort&compress]{natbib}
\usepackage{color}
\usepackage{verbatim}
 \usepackage{titlesec}

 \titleformat{\section}
   {\centering\Large\bfseries} 
  {\thesection}{1em}{}

\pagestyle{plain}
\setcounter{page}{1}
\allowdisplaybreaks[4]

\newcommand{\R}{\mathbb{R}}

\renewcommand{\theequation}{\arabic{section}.\arabic{equation}}
\renewcommand\thesection{\arabic{section}}
\setcounter{secnumdepth}{2}

\makeatletter
\renewcommand\subsection{\@startsection{subsection}{2}{\z@}%
  {-3.25ex\@plus -1ex \@minus -.2ex}%
  {1.5ex \@plus .2ex}%
  {\normalfont\large\bfseries\itshape}}
\makeatother
\setcounter{equation}{0}
\newtheorem{Thm}{Theorem}[section]
\newtheorem{Lem}[Thm]{Lemma}

\newtheorem{Prop}[Thm]{Proposition}

\theoremstyle{definition}
\newtheorem{Def}[Thm]{Definition}
\newtheorem{Rem}[Thm]{Remark}

\begin{document}

\title[Segregated solutions for a class of systems with  Lotka-Volterra interaction]{Segregated solutions for a class of systems with  Lotka-Volterra interaction}

\author{Qing Guo}
\address[Qing Guo]{College of Science, Minzu University of China, Beijing 100081, China}
\email{guoqing0117@163.com}

\author{Angela Pistoia}
\address[Angela Pistoia]{Dipartimento SBAI, Sapienza Universit\`a di Roma,
via Antonio Scarpa 16, 00161 Roma, Italy. }
\email{angela.pistoia@uniroma1.it}

  \author{Shixin Wen}
  \address[Shixin Wen]{School of Mathematics and Statistics, Central China Normal University, Wuhan 430079, China} \email{sxwen@mails.ccnu.edu.cn}

\thanks{A.Pistoia has been   partially supported by
the MUR-PRIN-20227HX33Z ``Pattern formation in nonlinear phenomena'' and
the INdAM-GNAMPA group. Q. Guo has been supported by the National Natural Science Foundation of China (Grant No. 12271539), and this research was also supported by the China Scholarship Council (CSC) during the visits of the first and third authors to Sapienza University of Rome.}

\subjclass{35B25, 35J47, 35Q55}

  \keywords{ Critical elliptic system; Lotka-Volterra type interactions; Non-variational structure; Segregated solutions; Lyapunov-Schmidt reduction.}

\begin{abstract}
This paper deals with the existence of positive solutions to the system
    \[
    \begin{cases}
    -\Delta w_1 - \varepsilon w_1 = \mu_{1} w_1^{p} + \beta w_1 w_2,\quad w_1> 0, & \text{in } \Omega, \\
    -\Delta w_2 - \varepsilon w_2 = \mu_{2} w_2^{p} + \beta w_1 w_2,\quad w_2> 0, & \text{in } \Omega, \\
    w_1 = w_2 = 0, & \text{on } \partial \Omega,
    \end{cases}
    \]
    where \( \Omega \subseteq \mathbb{R}^{N} \), \( N \geq 4\), \( p = 2^*-1 \), and \( \varepsilon \in (0, \Lambda_1(\Omega)) \) is sufficiently small. The interaction coefficient \( \beta = \beta(\varepsilon) \to 0 \) as \( \varepsilon \to 0 \).
    
    We construct a family of \emph{segregated solutions} to this system, where each component blows-up at a different critical point of the Robin function as \( \varepsilon \to 0 \). The system lacks a variational formulation due to its specific coupling form,
     which leads to essentially different behaviors in the subcritical, critical, and supercritical regimes and requires an 
  appropriate functional settings to carry out the construction.

\end{abstract}

\date{}\maketitle

\section{Introduction}

This paper is concerned with the construction of positive solutions to a class of Lotka--Volterra-type elliptic systems with critical nonlinearities and weak interspecies interaction, motivated by models in population dynamics where only nonnegative population densities are biologically meaningful. Specifically, we consider the following boundary value problem:
\begin{equation}\label{lv}
    \begin{cases}
    -\Delta w_1 - \varepsilon w_1 = \mu_{1} w_1^{p} + \beta w_1 w_2,\quad w_1> 0, & \text{in } \Omega, \\
    -\Delta w_2 - \varepsilon w_2 = \mu_{2} w_2^{p} + \beta w_1 w_2,\quad w_2> 0, & \text{in } \Omega, \\
    w_1 = w_2 = 0, & \text{on } \partial \Omega, \\
    \end{cases}
    \end{equation}
    where $\Omega \subseteq \mathbb{R}^{N}$, \( N \geq 4 \), \( p = 2^* - 1 \), \( \varepsilon \in (0, \Lambda_1(\Omega)) \) is sufficiently small, and the interaction coefficient \( \beta := \beta (\varepsilon) \to 0 \) as \( \varepsilon \to 0 \).

System \eqref{lv} arises as a spatial extension of the classical Lotka--Volterra model, which was originally introduced to describe the population dynamics of interacting species. In this setting, \( w_1(x) \) and \( w_2(x) \) represent the spatial densities of two species at location \( x \in \Omega \). The Laplacian terms \( -\Delta w_i \) model random movement or diffusion of individuals, accounting for their tendency to disperse from overcrowded regions. The linear terms \( -\varepsilon w_i \) represent constant per capita mortality or environmental suppression. The nonlinear terms \( \mu_i w_i^p \) represent intra-specific growth regulation: when \( \mu_i > 0 \), they model cooperative effects that enhance growth at higher densities, while negative values \( \mu_i < 0 \) would correspond to inhibitory effects arising from overpopulation or resource limitation.
The coupling term \( \beta w_1 w_2 \) models interspecies interaction: when \( \beta > 0 \), the interaction is mutualistic (beneficial to both species), while \( \beta < 0 \) corresponds to competition, where the presence of one species inhibits the growth of the other.
 The Dirichlet boundary conditions \( w_i = 0 \) on \( \partial \Omega \) correspond to an inhospitable habitat boundary, outside of which the species cannot survive or establish.

Lotka--Volterra-type reaction--diffusion systems have been widely used in ecological modeling to study the formation of spatial patterns, coexistence of species, and the impact of movement and interactions. In the context of weak cooperation, several existence and uniqueness results have been obtained in the subcritical regime using variational or topological methods; see, for example, \cite{CantrellCosner1987, KormanLeung1987, AliCosner1992, Korman1992}. More general models incorporating spatial heterogeneity and transport phenomena have been studied in \cite{Delgado2000}. For non-variational structures, López-Gómez and collaborators \cite{LopezGomez1992, LopezGomez1994} developed a global bifurcation framework to analyze the existence of positive solutions for generalized Lotka--Volterra systems.

From a mathematical perspective, system \eqref{lv} presents substantial challenges. Although the coupling term \( \beta w_1 w_2 \) is symmetric in \( w_1 \) and \( w_2 \), it does not correspond to consistent variational derivatives in both equations. As a result, the system cannot be derived as the Euler--Lagrange equations of any energy functional and thus lacks a variational structure. This precludes the direct application of critical point theory or energy minimization techniques, requiring alternative tools such as bifurcation methods, fixed-point arguments, or singular perturbation techniques.

Another difficulty in our problem originates from the critical growth of the self-interaction terms \( \mu_i w_i^p \), where \( p = 2^*-1 \). This exponent corresponds to the critical Sobolev embedding and leads to a loss of compactness, even in the scalar case. A classical example is the celebrated Brezis--Nirenberg problem~\cite{Brezis83}, which has inspired a large body of works in the literature.
In particular, notable contributions on  the existence and asymptotic properties of solutions as \( \varepsilon \to 0 \) 
has been obtained by 
~\cite{Han91,Rey89,Rey90,musso,kl1,kl2,druet,musa,pirava,piro}.
Our work can be viewed as a generalization of this theory to coupled systems.

In the context of systems with variational structure—such as coupled nonlinear Schrödinger equations—there are now several results addressing critical and near-critical coupling; see, for instance, \cite{ChenLin14,ChenZou12,Pistoia2017}. However, the Lotka--Volterra system under consideration presents additional challenges. The coupling term \( \beta w_1 w_2 \), though symmetric in form, does not arise from an energy functional and exhibits dimension-dependent behavior: it is subcritical for \( N =3, 4, 5 \), critical for \( N = 6 \), and supercritical for \( N \geq 7 \). We point out that the  case \( N = 3 \)  differs entirely from case \(N\geq 4\), as already observed in the case of the single equation.
That is why in the present paper we only study the case  \(N\geq 4\).
Moreover, we focus on the small coupling regime, where \( \beta := \beta(\varepsilon) \to 0 \) as \( \varepsilon \to 0 \).

\vspace{0.2cm}
Before stating our result, we need to introduce some notation.
Let us denote by \( G \) the Green's function of the negative Laplacian on \( \Omega \) with homogeneous Dirichlet boundary conditions, and by \( H \) its regular part, defined such that
\begin{align*}
 H(x, y) = \frac{B_N}{|x - y|^{N-2}} - G(x, y), \quad \forall (x, y) \in \Omega^2,   
\end{align*}
where \( B_N = \left[(N-2)\, \text{meas}(S^{N-1})\right]^{-1} \), and \( S^{N-1} \) denotes the \((N-1)\)-dimensional unit sphere in \( \mathbb{R}^N \). 
For every \( x \in \Omega \), the leading term of \( H \), defined by
\begin{align*}
  \Phi_{\Omega}(x) := H(x, x),  
\end{align*}
is called the \emph{Robin function} of \( \Omega \) at the point \( x \).

In the following, we remind the definition of {\em stable critical point}. 
\begin{Def}\label{yy1}
Given a smooth function $f:D\subset\mathbb R^N\to\mathbb R,$ a  critical point $x_0\in D$ is  stable if there exists a neighbourhood $\Theta$ of $x_0$ such that the Brouwer degree  $\mathtt{deg}(\nabla f,\Theta,0)\not=0,$ where 
$\mathtt{deg}$ denotes  the Brouwer degree. \end{Def}
In particular, any  strict local minimum or maximum points and any non-degenerate critical points are a stable critical points.
 \\
 
 Finally, our main results can be stated as follows.

\medskip
\begin{Thm}\label{th1-4}
   Assume that $N\geq4$, and let \( \bar{\xi}_1,  \bar{\xi}_2 \in \Omega \) be two stable critical points of the Robin function \( \Phi_\Omega \). 
   Then there exists \( \varepsilon_0 > 0 \), such that for any \( \varepsilon\in (0, \varepsilon_0) \), there exists $\beta_\varepsilon>0$ with the following property: for every  $\beta\in\left[-\beta_\varepsilon,\beta_\varepsilon\right]$, the system \eqref{equivalent-lv} admits a positive segregated solution \( (u, v) \) which blows-up at \(   \bar{\xi}_1 \) and \(  \bar{\xi}_2 \) as $\varepsilon\to0$, respectively.
    \end{Thm}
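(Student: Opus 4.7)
\medskip
\noindent\textbf{Proof plan.} Since system \eqref{lv} is not variational, there is no reduced energy to minimize; instead I will carry out a direct Lyapunov--Schmidt reduction in the product Sobolev space $H^1_0(\Omega)\times H^1_0(\Omega)$, complemented, when needed, by a weighted $L^\infty$ norm to absorb the coupling nonlinearity. After the rescaling $u_i=\mu_i^{1/(p-1)}w_i$, which recasts \eqref{lv} as the equivalent system referred to in the statement, I look for solutions of the form
$$
u_i \;=\; PU_{\delta_i,\xi_i}\;+\;\phi_i,\qquad i=1,2,
$$
where $PU_{\delta,\xi}$ is the Dirichlet projection in $\Omega$ of the standard Aubin--Talenti bubble $U_{\delta,\xi}$, the concentration parameter is $\delta_i=d_i\,\delta(\varepsilon)$ with the classical Brezis--Nirenberg rate $\delta(\varepsilon)=\varepsilon^{1/(N-4)}$ for $N\geq 5$ and an exponential rate $\delta(\varepsilon)\simeq e^{-c/\varepsilon}$ for $N=4$, and the centers $\xi_1,\xi_2$ range in small disjoint neighbourhoods of $\bar\xi_1,\bar\xi_2$. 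The fact that $\bar\xi_1\neq\bar\xi_2$ is crucial: it guarantees that $U_{\delta_1,\xi_1}$ and $U_{\delta_2,\xi_2}$ concentrate at separated points, so the cross product $U_{\delta_1,\xi_1}U_{\delta_2,\xi_2}$ is uniformly of lower order than either of the two self-interactions.

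\medskip
The infinite-dimensional step consists in projecting the rescaled system onto $K^\perp$, where $K$ is the $2(N+1)$-dimensional space spanned by the derivatives of $PU_{\delta_i,\xi_i}$ in $\delta_i$ and in the components of $\xi_i$, and solving for $(\phi_1,\phi_2)\in K^\perp$ by the Banach contraction principle. The linearization at the ansatz is a block-diagonal perturbation of the classical critical operator $-\Delta-p U_{\delta_i,\xi_i}^{p-1}$, whose invertibility on $K^\perp$ is by now standard; the off-diagonal block generated by $\beta w_1 w_2$ has operator norm of order $\beta$ and is therefore harmless as soon as $\beta=\beta(\varepsilon)\to 0$. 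The delicate point is the choice of norm for $\phi_i$: the coupling is subcritical for $N\in\{4,5\}$, critical for $N=6$ and supercritical for $N\geq 7$, and in the supercritical regime $w_1 w_2$ need not belong to $H^{-1}(\Omega)$. This forces the use of a weighted $L^\infty$ or Morrey-type norm and of the corresponding dual framework to make the estimates of both the error of the ansatz and the invertibility of the linearization uniform in the dimension.

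\medskip
Once $\phi=\phi(d_1,\xi_1,d_2,\xi_2,\varepsilon,\beta)$ has been produced with an explicit size estimate, requiring the $K$-components of the residual to vanish reduces the problem to an algebraic system for the $2(N+1)$ parameters $(d_i,\xi_i)$. Because $\beta\to 0$ and the two bubbles are segregated, the leading order of this system decouples into two independent Brezis--Nirenberg reduced systems, schematically
$$
d_i^{N-4}\Phi_\Omega(\xi_i)\;=\;c_N\;+\;o(1),\qquad \nabla\Phi_\Omega(\xi_i)\;=\;o(1),\qquad i=1,2,
$$
(with a logarithmic modification when $N=4$), whose unperturbed solutions are precisely the pairs $(d_i^\star,\bar\xi_i)$ with $\bar\xi_i$ a critical point of $\Phi_\Omega$ and $d_i^\star$ an explicit positive constant depending only on $\Phi_\Omega(\bar\xi_i)$ and $N$. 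Since each $\bar\xi_i$ is a \emph{stable} critical point of $\Phi_\Omega$ in the sense of Definition~\ref{yy1}, the Brouwer degree of the leading map on the product of two small neighbourhoods is non-zero. Invariance of the degree under the $o(1)+O(\beta)$ perturbation coming from the reduction then yields, for every $\varepsilon\in(0,\varepsilon_0)$ and every $\beta\in[-\beta_\varepsilon,\beta_\varepsilon]$ with $\beta_\varepsilon$ small enough, a zero of the full reduced system, which corresponds to the segregated solution announced.

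\medskip
The main obstacle, in my view, is neither the reduction scheme nor the degree argument, but the interplay between the non-variational structure of \eqref{lv} and the dimension-dependent growth of $\beta w_1 w_2$. In the absence of an energy functional there is no short-cut via a reduced Lagrangian: every quantitative control---size of the ansatz error, spectral gap of the linearization on $K^\perp$, asymptotic expansion of the reduced map---has to be carried out directly in carefully chosen dual norms, and the same functional framework must handle uniformly the subcritical, critical and (most delicately) the supercritical regime $N\geq 7$, where the coupling may fail to lie in $H^{-1}$ and the natural weighted sup-norm setting must be adapted accordingly.
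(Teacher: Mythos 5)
Your outline captures the correct skeleton of the argument---Lyapunov--Schmidt reduction around two projected bubbles centred near $\bar\xi_1,\bar\xi_2$, contraction on the orthogonal complement of the kernel, and a degree argument built on the stability of the critical points of $\Phi_\Omega$, all with the concentration rate $\lambda\sim\varepsilon^{1/(N-4)}$ for $N\ge 5$ and exponential for $N=4$. Your proposed functional setting for the supercritical regime (weighted $L^\infty$ or Morrey-type norms) is a genuine alternative to the one used in the paper, which instead works in $X=H^1_0(\Omega_\lambda)\cap L^s(\Omega_\lambda)$ with $s>2^*$ (taking $s=N$), following the framework of Lemma~\ref{ls}; both choices address the fact that the coupling $w_1w_2$ need not belong to $H^{-1}$ when $N\ge 7$.

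The genuine gap in your proposal is the treatment of $N=4$. You write that the reduced system is ``the same, with a logarithmic modification when $N=4$,'' but the paper explicitly points out, and you can check directly, that the error estimate from Proposition~\ref{error-size}---$\|\bm\phi\|_H\lesssim\varepsilon\lambda$ in dimension four---is not small enough to read off the gradient part of the reduced equation by a direct expansion. Concretely, the $\xi$-component of the reduced map has leading order $\lambda^{N-1}\,\partial_{\xi^j}\Phi_\Omega(\xi_i)=\lambda^3\,\partial_{\xi^j}\Phi_\Omega(\xi_i)$, while the quadratic remainder is only $O(\|\bm\phi\|^2)=O(\varepsilon^2\lambda^2)$. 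With the exponential choice $\lambda=e^{-d/\varepsilon}$ forced by the balance $\varepsilon\lambda^2|\ln\lambda|\sim\lambda^2$, one has $\varepsilon^2\lambda^2/\lambda^3=\varepsilon^2 e^{d/\varepsilon}\to\infty$, so the remainder swamps the leading term and the naive expansion fails. The paper circumvents this by abandoning the direct expansion of the $\xi$-projections and using instead local Pohozaev identities on small balls around the concentration points (Lemmas~\ref{lem4.1} and~\ref{newlemma}), where boundary integrals of $|\nabla\widehat\phi|^2$ and $|\widehat\phi|^4$ over a well-chosen sphere are controlled via Lemma~\ref{lemtool}; this turns the $O(\|\bm\phi\|^2)$ bulk contribution into an $O(\varepsilon^2\lambda^2)$ boundary term that \emph{can} be absorbed into $o(\lambda^2)$. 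Without some device of this kind (Pohozaev identities, a sharper pointwise estimate on $\phi$, or a second-order expansion of the error term), your scheme does not close in dimension four, and since $N=4$ is included in the theorem, this is a step that would fail as written.

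A smaller remark: you should also make explicit the threshold $|\beta|=o(\lambda^{(N-6)/2})$ needed already at the invertibility stage (Lemma~\ref{invert}) and the sharper, dimension-dependent smallness \eqref{suppose-beta} needed to push the coupling into the remainder of the reduced map (e.g.\ $|\beta|=o(\lambda|\ln\lambda|)$ for $N=4$, $o(\lambda^{N-5})$ for $N\ge 7$). Your ``harmless as soon as $\beta\to 0$'' is too coarse: the allowed size of $\beta_\varepsilon$ in the statement is precisely this $\lambda$-dependent quantity, not just any $o(1)$.
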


    \medskip
Let us make some comments.\\

\begin{Rem}\rm
We observe that  there is a wide class of domains whose Robin function satisfies the assumption of Theorem \ref{th1-4}.
In fact, it is well known that   the Robin function has two critical points in a dumbell shaped domain
 (two local minima) and in a domain with a hole   (one minimum and one saddle point).
Moreover, it is always possible to perturb a bit the domain  so that  all the critical points of the Robin function in the perturbed domain  are nondegenerate as proved by Micheletti and Pistoia \cite{mipi}. 
\end{Rem} 

\begin{Rem}\rm
   We use a finite-dimensional reduction approach to construct our solutions that exhibit the segregated profile as $\varepsilon$ approaches zero.
   As it is well known, the problem is strongly affected by the space dimension $N.$ In fact, when $N=4$, the rate
    of the error term is not small enough to argue as in the higher-dimensional cases,  then some new local Pohozaev identities are introduced to overcome the difficulties.  Moreover, in the supercritical coupling regime (i.e., \( N \geq 7 \)), working within standard energy spaces is no longer sufficient, as the coupling term becomes ill-defined. To overcome this difficulty, we are forced to introduce alternative integrability spaces, which significantly increase the analytical complexity of the problem.
     \end{Rem}

\begin{Rem}\rm
We point out that the coupling term \( \beta w_1w_2 \) is  linear and thus induces only a weak interaction, which strongly affects the 
 leading term of the reduced problem. The smallness condition on \( \beta \) becomes essential in our construction, since it ensures that   the coupling term becomes a higher-order term in the expansion (see the terms $E_8$ and $Q_8$ in the proof of Proposition \ref{supercritical} in  Appendix \ref{BBB} and the terms $J_1$ and $J_2$ in the proof of Lemma \ref{newlemma}). It would be interesting to understand what happens in presence of a strong interaction but we believe that the profile of the solutions we are building must be changed drastically.
   \end{Rem}

\begin{Rem}\rm
  Finally, we observe that our  construction  can be carried out for a system with $k$ components provided the interaction among different components is small enough. In particular, we can claim that  there exists a solution whose components blow-up at   $k$ different critical points of the Robin function as $\varepsilon\to0.$
    \end{Rem}

The paper is organized as follows. In Section \ref{two} we set up the problem, while in Section \ref{three} we provide the proof of the main theorem. All intermediate results required in the finite-dimensional reduction process, which appear in Section \ref{three}, are proved in the appendices \ref{AAA} and \ref{BBB}. Appendix \ref{CCC} contains well‑known results, which we reproduce here to keep the work self‑contained.



\section{Preliminaries}\label{two}
\vspace{0.2cm}\subsection{Notations}
Let $H_0^1(\Omega_{\lambda})$ be the Hilbert space, where 
\begin{align*}
   \Omega_{\lambda}:=\frac{\Omega}{\lambda}, \quad\text{$N\geq 4$}.
\end{align*}
We would like to point out that the parameter $\lambda$ is related to $\varepsilon$ in our problem, that is, $\lambda:=\lambda(\varepsilon)\to 0$ as $\varepsilon\to 0$. The precise relationship between them will be clarified later, depending on the dimension of space.

In addition, the corresponding inner product and the norm are given by
$$
\langle u, v\rangle_{H_0^1( \Omega_{\lambda})}=\displaystyle\int_{ \Omega_{\lambda}} \nabla u \cdot \nabla v,\quad\|u\|_{H_0^1(\Omega_{\lambda})}=\left(\int_{\Omega_{\lambda}}|\nabla u|^2\right)^{\frac{1}{2}}.
$$
For any $s \in[1,+\infty)$, the space $L^s(\Omega_{\lambda})$ is also equipped with the standard norm
$$
\|u\|_{s}=\left(\displaystyle\int_{\Omega_{\lambda}}|u|^s\right)^{\frac{1}{s}}.
$$

\medskip
Moreover, for $N\geq 4$, we denote ${X}:={H}_0^1(\Omega_{\lambda}) \cap {L}^s(\Omega_{\lambda})$, where 
\begin{equation}\label{X-space}
    \begin{cases}
        s=2^*,&\quad \text{if $N=4,5$ (subcritical case) or $N=6$ (critical case),}\\
          s>2^*, &\quad \text{if $N\geq 7$ (supercritical case).}\\
    \end{cases}
\end{equation} 
${X}$ is a Banach space equipped with the norm
$$
\|u\|_{X}=\|u\|_{H_0^1(\Omega_{\lambda})}+\|u\|_{s}.
$$ We  point out that when $N=4,5,6$, the space ${X}$ coincides with ${H}_0^1(\Omega_{\lambda})$. For the sake of simplicity, when $N\geq 7$, we will choose $s=N$.
 Next, we define $H:=X\times X$ equipped with the norm
$$\left\|(u,v)\right\|_{H}=\|u\|_{X}+\|v\|_{X}.$$

 Set $y=x/\lambda\in\Omega_\lambda$, $\forall x\in\Omega\subseteq \R^N$, $N\geq 4$. A
direct computation shows that, $(w_1, w_2)$ solves problem \eqref{lv} if and only if 
\begin{align}\label{u-w}
    \left({u},{v}\right):=\left(\lambda^{\frac{N-2}{2}}\,w_1(\lambda y),\,\lambda^{\frac{N-2}{2}}\,w_2(\lambda y)\right)
\end{align}
solves
\begin{equation}\label{equivalent-lv}
\begin{cases}
-\Delta {u}-\varepsilon\,\lambda^{2} {u}=\mu_{1} {u}^{p}+\beta\,\lambda^{\frac{6-N}{2}} \,{u}\,{v}, \quad {u}>0, &\text{ in $\Omega_\lambda$},
\\
-\Delta {v}-\varepsilon\,\lambda^{2} {v}=\mu_{2} {v}^{p}+\beta\,\lambda^{\frac{6-N}{2}} \,{u}\,{v},\quad {v}>0,  &\text{ in $\Omega_\lambda$}, \\
{u}={v}=0,&\text{ on $\partial\Omega_\lambda$,}
\end{cases}\end{equation}
 where $\Omega_\lambda \subseteq \mathbb{R}^{N}$, \( N \geq 4 \), and \( p = 2^* - 1 \). 

 \medskip
Assume that $\xi_1,\,\xi_2\in\Omega$. In this paper, we aim to construct the solution to \eqref{equivalent-lv} of the form $$(u,v)=\left(\mu_1^{-\frac{N-2}{4}} P_{\lambda} U_{\delta_1, \xi_1/\lambda}+\phi_1 ,\,\mu_2^{-\frac{N-2}{4}} P_{\lambda} U_{\delta_2, \xi_2/\lambda}+\phi_2 \right),$$ 
 \text{with} $\|\phi_1\|_{X},  \|\phi_2\|_{X}\to 0$ \text{as } $\lambda:=\lambda(\varepsilon) \to 0$. Here $P_{\lambda}:D^{1,2}(\mathbb{R}^N)\to H^{1}_0(\Omega_\lambda)$ is the projection operator, and $\delta_i$ ($i=1,2$) are fixed positive constants. Moreover,
$\mu_i^{-\frac{N-2}{4}} P_{\lambda}U_{\delta_i, \xi_i/\lambda} $ solves 
\begin{equation*}
\begin{cases}
-\Delta \left(\mu_i^{-\frac{N-2}{4}} P_{\lambda}U_{\delta_i, \xi_i/\lambda}\right)= \mu_i\left(\mu_i^{-\frac{N-2}{4}} U_{\delta_i, \xi_i/\lambda}\right)^{p}, \quad \text{in $\Omega_\lambda$},\\
\mu_i^{-\frac{N-2}{4}} P_{\lambda}U_{\delta_i, \xi_i/\lambda}=0,\quad \text{on $\partial\Omega_\lambda$,}
\end{cases}
\end{equation*}
 which is closely related to the limit equation
    \begin{equation*}
       -\Delta U_{  {\delta}, {\xi}}= U_{ {\delta},{\xi}}^{p}, \quad \text{in $\mathbb{R}^N$, }
    \end{equation*}
    whose solutions are (see \cite{aubin,talenti})
    \begin{align*}
       U_{ {\delta}, {\xi}}(x)=C_N\,\frac{{ {\delta}}^{\frac{N-2}{2}}}{\left( {\delta}^2+\left|x- {\xi}\right|^2\right)^{\frac{N-2}{2}}}\in D^{1,2}(\mathbb{R}^N).
    \end{align*}

\vspace{0.2cm}

\vspace{0.2cm}
\subsection{Setting of the problem}
\begin{Def}
For $N\geq4$, let $i_\lambda^{*}:L^{\frac{2N}{N+2}}(\Omega_\lambda)\to H^{1}_0(\Omega_\lambda)$ be the adjoint operator of the embedding $i_\lambda:H^{1}_0(\Omega_\lambda)\hookrightarrow L^{\frac{2N}{N-2}}(\Omega_\lambda)$, i.e.
    $$
i_\lambda^*(f)=u \quad \Longleftrightarrow \quad \left\langle u, \varphi\right\rangle_{{H}_0^1\left(\Omega_{\lambda}\right)}=\displaystyle\int_{\Omega_{\lambda}} f(x)\, \varphi(x) d x, \quad \forall \varphi \in {H}_0^1\left(\Omega_{\lambda}\right).
$$
\end{Def}

In the following, we present some embedding properties of the operators, which can be found in \cite[Remark 2.2 and Lemma 2.3]{ade}.
\begin{Lem}\label{embed} The operator $i_\lambda^*$ is uniformly continuous with respect to $\lambda>0$, that is, 
$$\left\|i_\lambda^{*}(u)\right\|_{H_0^1(\Omega_\lambda)}\leq\,S^{-\frac{1}{2}}\left\|u\right\|_{{\frac{2N}{N+2}}},\quad \forall u\in L^{\frac{2N}{N+2}}(\Omega_\lambda),$$
where $S$ is the best constant for the Sobolev embedding. 
\end{Lem}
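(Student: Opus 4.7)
The plan is to establish the bound by a standard duality argument, making the dependence on $\lambda$ transparent by exploiting the fact that the best Sobolev constant $S$ is the same on every open subset of $\mathbb R^N$.

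First, given $u \in L^{\frac{2N}{N+2}}(\Omega_\lambda)$, I would set $w := i_\lambda^{*}(u) \in H_0^1(\Omega_\lambda)$ and test the defining identity of the adjoint operator against the particular choice $\varphi = w$. This immediately produces
\[
\|w\|_{H_0^1(\Omega_\lambda)}^{2} \;=\; \langle w,w\rangle_{H_0^1(\Omega_\lambda)} \;=\; \int_{\Omega_\lambda} u(x)\, w(x)\, dx .
\]
Hölder's inequality with the conjugate exponents $\frac{2N}{N+2}$ and $\frac{2N}{N-2} = 2^{*}$ then bounds the right-hand side by $\|u\|_{\frac{2N}{N+2}}\,\|w\|_{2^{*}}$.

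Next, I would invoke the Sobolev inequality in the sharp form $\|w\|_{2^{*}} \leq S^{-\frac{1}{2}}\,\|w\|_{H_0^1(\Omega_\lambda)}$. The crucial observation for uniformity is that any $w \in H_0^1(\Omega_\lambda)$ extends by zero to a function in $D^{1,2}(\mathbb R^N)$ whose $L^{2^{*}}$-norm and Dirichlet norm agree with those of the original $w$. Consequently the sharp embedding constant on $\Omega_\lambda$ is bounded above by the one on $\mathbb R^N$, which is precisely $S^{-1/2}$, and the inequality holds with the \emph{same} constant $S$ for every $\lambda>0$. Substituting this into the previous estimate gives
\[
\|w\|_{H_0^1(\Omega_\lambda)}^{2} \;\leq\; S^{-\frac{1}{2}}\,\|u\|_{\frac{2N}{N+2}}\,\|w\|_{H_0^1(\Omega_\lambda)},
\]
and dividing through by $\|w\|_{H_0^1(\Omega_\lambda)}$ (the case $w\equiv 0$ being trivial) yields the claimed bound $\|i_\lambda^{*}(u)\|_{H_0^1(\Omega_\lambda)} \leq S^{-\frac{1}{2}}\,\|u\|_{\frac{2N}{N+2}}$.

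No serious obstacle is expected: the only non-automatic point is the zero-extension argument that guarantees the Sobolev constant is independent of the rescaled domain $\Omega_\lambda$, and thereby produces the uniform (in $\lambda$) control asserted in the statement. Every other step is a direct consequence of Hölder's inequality and the definition of the adjoint operator.
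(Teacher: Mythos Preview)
Your argument is correct and is precisely the standard duality computation one expects: the paper does not supply its own proof of this lemma but merely cites \cite[Remark 2.2]{ade}, and the argument there is exactly the one you give (test the defining identity with $\varphi=i_\lambda^*(u)$, apply H\"older with exponents $\tfrac{2N}{N+2}$ and $2^*$, then use the sharp Sobolev inequality, whose constant is domain-independent by zero extension). Nothing further is needed.
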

\begin{Lem}
\label{ls}
Let $s>\frac{N}{N-2}$. If $u \in {L}^{\frac{Ns}{N+2s}}\left(\Omega_{\lambda}\right) \cap \mathrm{L}^{\frac{2 N}{N+2}}\left(\Omega_{\lambda}\right)$, then $i_\lambda^*(u) \in$ $L^s\left(\Omega_{\lambda}\right)$. Moreover,
$$
\left\|i_\lambda^*(u)\right\|_{s}\leq C(\Omega)\|u\|_{{\frac{N s}{N+2 s}}},
$$
where $C(\Omega)>0$ is a constant depending only on the domain $\Omega$. 
\end{Lem}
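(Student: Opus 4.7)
My plan is to recast $i_\lambda^*(u)$ as an integral against the Dirichlet Green's function of $\Omega_\lambda$ and then invoke a Hardy-Littlewood-Sobolev (HLS) estimate. To begin with, the intersection hypothesis $u\in L^{2N/(N+2)}(\Omega_\lambda)$ is exactly what guarantees that $\phi:=i_\lambda^*(u)$ is well defined as the unique element of $H_0^1(\Omega_\lambda)$ solving $-\Delta\phi=u$ weakly (by Lemma \ref{embed}), so that Green's representation yields
$$
\phi(y)\;=\;\int_{\Omega_\lambda} G_\lambda(y,z)\,u(z)\,dz,\qquad y\in\Omega_\lambda,
$$
where $G_\lambda$ denotes the Green's function of $-\Delta$ on $\Omega_\lambda$ with Dirichlet boundary conditions.

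The next step is to establish the pointwise bound $G_\lambda(y,z)\le B_N\,|y-z|^{-(N-2)}$ \emph{uniformly in $\lambda$}. Writing $G_\lambda=B_N\,|\cdot|^{-(N-2)}-H_\lambda$, the regular part $H_\lambda$ is nonnegative by the maximum principle, since for each fixed $y\in\Omega_\lambda$ the function $H_\lambda(y,\cdot)$ is harmonic with nonnegative boundary values $B_N|y-\cdot|^{-(N-2)}$. One may double-check the $\lambda$-uniformity directly from the scaling identity $G_\lambda(y,z)=\lambda^{N-2}G_\Omega(\lambda y,\lambda z)$, which combined with the analogous bound for $G_\Omega$ cancels every power of $\lambda$. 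This step is the only genuine issue, since the rest of the argument is dimension-universal and the lemma claims a constant depending only on $\Omega$.

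With this bound in hand, $|\phi(y)|\le B_N\int_{\Omega_\lambda}|u(z)|\,|y-z|^{-(N-2)}\,dz$, so $|\phi|$ is pointwise dominated by the Riesz potential of order $2$ applied to $|u|$ (extended by zero outside $\Omega_\lambda$). Setting $q:=Ns/(N+2s)$, a direct computation gives $\tfrac{1}{q}-\tfrac{1}{s}=\tfrac{2}{N}$, and the condition $s>N/(N-2)$ is equivalent to $q>1$, which is precisely the HLS integrability threshold (the condition $q<s<\infty$ being automatic). The Hardy-Littlewood-Sobolev inequality on $\mathbb{R}^N$ then yields
$$
\|\phi\|_{L^s(\Omega_\lambda)}\;\le\;B_N\left\|\int\frac{|u(z)|}{|\cdot-z|^{N-2}}\,dz\right\|_{L^s(\mathbb{R}^N)}\;\le\;C(N,s)\,\|u\|_{L^q(\Omega_\lambda)},
$$
which is the stated estimate with $C(\Omega):=B_N C(N,s)$. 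The forward-looking summary is therefore: the representation by $G_\lambda$ reduces the matter to a Riesz potential estimate, and the scale invariance of both the Green's function bound and the HLS inequality delivers a constant independent of $\lambda$; the only step requiring care is the uniform pointwise control of $G_\lambda$, which I would handle via the maximum principle as above.
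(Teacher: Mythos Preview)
Your argument is correct. The paper does not supply its own proof of this lemma; it simply cites \cite[Remark 2.2 and Lemma 2.3]{ade}. Your approach---Green's representation plus the uniform pointwise bound $G_\lambda(y,z)\le B_N|y-z|^{-(N-2)}$ followed by Hardy--Littlewood--Sobolev---is the standard route to such Calder\'on--Zygmund type $L^q\to L^s$ estimates for $(-\Delta)^{-1}$, and every step you outline is valid. The exponent check $1/q-1/s=2/N$ with $q>1\iff s>N/(N-2)$ is exactly right, and the scale invariance of both the Green's function bound and the HLS inequality indeed makes the constant independent of $\lambda$. One minor remark: the constant you obtain, $B_N\,C(N,s)$, depends only on $N$ and $s$ and not on $\Omega$ at all, so you have in fact proved a slightly sharper statement than the lemma claims.
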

\begin{Def}
    Given any $\eta\in(0,1)$. Define the following set
$$\mathcal{O}_\eta= \left\{\left(\bm{\delta}, \bm{\xi}\right) \in\left(\mathbb{R}^{+}\right)^2 \times \Omega^2:\,\left|\xi_{1}-\xi_{2}\right|\geq 2\eta,\,dist(\xi_i,\partial\Omega)\geq 2\eta,\,\eta<\delta_i<1 / \eta,\quad i=1,2\right\} .$$
\end{Def}
\vspace{0.2cm}
Now, by the definition of the operator $i_\lambda^*$, problem \eqref{equivalent-lv} reduces to
\begin{equation}\label{main-geq5}
    \begin{cases}
      u=i_\lambda^{*}\left(\mu_{1} {g(u)}+\beta\lambda^{\frac{6-N}{2}} uv+\varepsilon\,\lambda^{2} u\right),\quad u>0,\quad&\text{ in $\Omega_\lambda$},
\\
v= i_\lambda^{*}\left(\mu_{2} {g(v)}+\beta\lambda^{\frac{6-N}{2}} uv+\varepsilon\,\lambda^{2} v\right),\quad v>0, &\text{ in $\Omega_\lambda$} , \\
u=v=0,&\text{ on $\partial\Omega_\lambda$,}\\ 
(u,v)\in H.
    \end{cases}
\end{equation}
where $\Omega\subseteq\R^N$, $N\geq 4$, and $g(\omega)=\left(\omega^{+}\right)^p$, \( p = 2^* - 1 \).

\vspace{0.2cm}
    Before studying \eqref{main-geq5}, we fix some notations for simplicity. For any $\left(\bm{\delta}, \bm{\xi}\right)\in \mathcal{O}_\eta$, set $\bm{y}:=\left(y_1,y_2\right) \in \left(\Omega_{\lambda}\right)^{2}$, where $y_i=\xi_{i} / \lambda$, $i=1,2$. Define
\begin{align}
    \label{limit-def}
U_i:=U_{\delta_i, y_i}, \quad \text { and } \quad P_{\lambda}U_i:=i_\lambda^*\left(U_{\delta_i, y_i}^p\right).
\end{align}
Moreover, 
for $j=0,1, \ldots, N$ and $i=1,2$, we denote
\begin{align}  \label{ker-ele}
    \psi_i^0:=\frac{\partial U_{\delta_i, y_i}}{\partial \delta_i}, \quad \psi_i^j:=\frac{\partial U_{\delta_i, y_i}}{\partial y_i^j}, \quad \text { and } \quad P_{\lambda} \psi_i^j:=i_\lambda^*\left(p\,U_{\delta_i, y_i}^{p-1} \psi_i^j\right).
\end{align}

Now, to proceed with the reduction, we split the space $X$ into the sum of spaces $K_i$ and $K_i^{{\perp}}$, where 
\begin{equation}\label{space}
    \begin{split}
    K_i&:=K^{\lambda}_{\delta_i,y_i}=span\left\{P_\lambda\psi^{j}_{\delta_i,y_i}(x),\quad j=0,1,\cdots,N\right\},\\
    K_i^{{\perp}}&:=\left\{\phi\in X\Big|\,\left\langle \phi, P_\lambda\psi^{j}_{\delta_i,y_i}(x)\right\rangle_{X}=0,\quad j=0,1,\cdots,N\right\}.
    \end{split}
\end{equation}
Furthermore, 
we introduce the projection maps $${\Pi_{\bm{\delta},\bm{\xi}}}:={\Pi_1}\times {\Pi_2}:H \mapsto  {K_1}\times K_2,$$
and
$$\Pi^{\perp}_{\bm{\delta},\bm{\xi}}:={\Pi^{\perp}_1}\times {\Pi^{\perp}_2}:H\mapsto \left({K_1}\times K_2\right)^{\perp}={K^{\perp}_1}\times K^{\perp}_2.$$ In addition, for $i=1,2$, $\Pi_{i}:X \mapsto  {K_i}$ with
$$
\Pi_{i}\left(w\right)=\sum_{j=0}^N\left\langle w, P_{\lambda} \psi_{\delta_i,y_i}^j\right\rangle_{X}P_{\lambda} \psi_{\delta_i, y_i}^j,\quad \text{$\forall\,w\in X$,}
$$
and $\Pi_{i}^{\perp}:X\to K_{i}^{\perp}$, that is, 
$$\Pi_{i}^{\perp}(w)=w-\Pi_{i}(w),\quad \forall \,w\in X.$$
Obviously, it holds that
$$
\left\|\Pi^{\perp}_{\bm{\delta},\bm{\xi}}\left(u,v\right)\right\|_{H}\lesssim\left\|(u,v)\right\|_H, \quad \forall (u,v) \in H.
$$

Our approach to solve the problem \eqref{main-geq5} will be to find, for suitable $\bm{\delta}>0,\,\bm{\xi} \in \Omega^2$ and for small $\lambda(\varepsilon)>0$, there exists $\bm{\phi}:=\left(\phi_1,\phi_{2}\right) \in {K^{\perp}_1}\times K^{\perp}_2$ such that 
\begin{align}\label{desired}
    (u,v):=\left(\mu_1^{-\frac{N-2}{4}}P_\lambda U_{\delta_1,\xi_1/\lambda}+\phi_1,\mu_2^{-\frac{N-2}{4}}P_\lambda U_{\delta_2,\xi_2/\lambda}+\phi_2\right)
\end{align}verifying 
\begin{equation}\label{first-equ}
\begin{aligned}
  \begin{cases}
   \Pi^{\perp}_{1}\left(u-i_\lambda^*\left[\mu_{1} {g(u)}+\beta\lambda^{\frac{6-N}{2}} uv+\varepsilon\,\lambda^{2} u\right]\right)=0,  \\
\Pi^{\perp}_{2}\left(v-i_\lambda^*\left[\mu_{2} {g(v)}+\beta\lambda^{\frac{6-N}{2}} uv+\varepsilon\,\lambda^{2}v\right]\right)=0,
\end{cases}\\  
\end{aligned}
\end{equation}
and
\begin{equation}\label{second-equ}
  \begin{aligned}
\begin{cases}
 \Pi_{1}\left(u-i_\lambda^*\left[\mu_{1} {g(u)}+\beta\lambda^{\frac{6-N}{2}} uv+\varepsilon\,\lambda^{2} u\right]\right)=0,  \\
\Pi_{2}\left(v-i_\lambda^*\left[\mu_{2} {g(v)}+\beta\lambda^{\frac{6-N}{2}} uv+\varepsilon\,\lambda^{2}v\right]\right)=0.   
\end{cases}
\end{aligned}  
\end{equation}

 \vspace{0.2cm}

 Firstly, we study the solvability of \eqref{first-equ}. Plugging \eqref{desired} into \eqref{first-equ}, we rewrite \eqref{first-equ} as
\begin{equation}\label{new-linear}
\bm{\mathcal{L}}_{\bm{\delta},\bm{\xi}}\left(\bm{\phi}\right)=\bm{\mathcal{E}}_{\bm{\delta},\bm{\xi}}+\bm{\mathcal{N}}_{\bm{\delta},\bm{\xi}}\left(\bm{\phi}\right).
\end{equation}
Here, the linear operator $\bm{\mathcal{L}}_{\bm{\delta},\bm{\xi}}=\left(\mathcal{L}^{1}_{\bm{\delta},\bm{\xi}},\mathcal{L}^{2}_{\bm{\delta},\bm{\xi}} \right)$ is given by 
$$
\mathcal{L}^{i}_{\bm{\delta},\bm{\xi}}:=\Pi^{\perp}_i\mathcal{L}_i:{K}^{\perp}_{1}\times{K}^{\perp}_{2}\to{K}^{\perp}_{i},\quad i=1,2,
$$
 where
 $\mathcal{L}_i$ is defined by
\begin{equation}\label{linear-op}
    \begin{aligned}
&\mathcal{L}_1\left(\phi_1,\phi_2\right)\\:=&\phi_1-i_\lambda^{*}\Big(p\left(P_\lambda U_{\delta_1,\xi_1/\lambda}\right)^{p-1}\,\phi_1+\varepsilon\,\lambda^{2}\,\phi_1+\beta\lambda^{\frac{6-N}{2}} \,\mu_1^{-\frac{N-2}{4}}\,P_{\lambda}U_{\delta_1,\xi_1/\lambda}\,\phi_2\\&\quad\,\quad\,\quad\,+\beta\lambda^{\frac{6-N}{2}}  \mu_2^{-\frac{N-2}{4}}P_{\lambda}U_{\delta_2,\xi_2/\lambda}\,\phi_1\Big) \\
&\mathcal{L}_2\left(\phi_1,\phi_2\right)\\:=&\phi_2-i_\lambda^{*}\Big(p\left(P_{\lambda}U_{\delta_2,\xi_2/\lambda}\right)^{p-1}\,\phi_2+\varepsilon\,\lambda^{2}\,\phi_2+\beta\lambda^{\frac{6-N}{2}}\,\mu_1^{-\frac{N-2}{4}}\,P_{\lambda}U_{\delta_1,\xi_1/\lambda}\,\phi_2\\&\quad\,\quad\,\quad\,+\beta\lambda^{\frac{6-N}{2}}\mu_2^{-\frac{N-2}{4}}P_{\lambda}U_{\delta_2,\xi_2/\lambda}\,\phi_1\Big).
\end{aligned}
\end{equation}
Similarly, we define the error term $\bm{\mathcal{E}}_{\bm{\delta},\bm{\xi}}=\left(\mathcal{E}^{1}_{\bm{\delta},\bm{\xi}},\mathcal{E}^{2}_{\bm{\delta},\bm{\xi}} \right)$ with $\mathcal{E}^{i}_{\bm{\delta},\bm{\xi}}:=\Pi^{\perp}_i\mathcal{E}_i\in{K}^{\perp}_{i}$, $i=1,2$, where $\mathcal{E}_i$ is given by
\begin{equation}\label{error}
    \begin{aligned}
\mathcal{E}_1:= &i_\lambda^{*}\Big(\mu_1^{-\frac{N-2}{4}}\left(\left(P_\lambda U_{\delta_1,\xi_1/\lambda}\right)^{p}-U_{\delta_1,\xi_1/\lambda}^{p}\right)+\varepsilon\,\lambda^{2}\,\mu_1^{-\frac{N-2}{4}}\,P_{\lambda}U_{\delta_1,\xi_1/\lambda}\\&\quad\,\quad+\beta\lambda^{\frac{6-N}{2}}\mu_1^{-\frac{N-2}{4}}\mu_2^{-\frac{N-2}{4}}P_{\lambda}U_{\delta_1,\xi_1/\lambda}\,P_{\lambda}U_{\delta_2,\xi_2/\lambda}\Big), \\
\mathcal{E}_2:= & i_\lambda^{*}\Big(\mu_2^{-\frac{N-2}{4}}\left(\left(P_{\lambda}U_{\delta_2,\xi_2/\lambda}\right)^{p}-U_{\delta_2,\xi_2/\lambda}^{p}\right)+\varepsilon\,\lambda^{2}\,\mu_2^{-\frac{N-2}{4}}\,P_{\lambda}U_{\delta_2,\xi_2/\lambda}\\&\quad\,\quad+\beta\lambda^{\frac{6-N}{2}}\mu_1^{-\frac{N-2}{4}}\mu_2^{-\frac{N-2}{4}}\,P_{\lambda}U_{\delta_1,\xi_1/\lambda}P_{\lambda}U_{\delta_2,\xi_2/\lambda}\Big).
\end{aligned}
\end{equation}
Finally, we denote the nonlinear term $\bm{\mathcal{N}}_{\bm{\delta},\bm{\xi}}=\left(\mathcal{N}^{1}_{\bm{\delta},\bm{\xi}},\mathcal{N}^{2}_{\bm{\delta},\bm{\xi}} \right)$ with  $$\mathcal{N}^{i}_{\bm{\delta},\bm{\xi}}:=\Pi^{\perp}_i\mathcal{N}_i:{K}^{\perp}_{1}\times{K}^{\perp}_{2}\to{K}^{\perp}_{i},\quad i=1,2,$$
where $\mathcal{N}_i$ is defined by 
\begin{equation}
\label{perturbed}
\begin{aligned}
&\mathcal{N}_1\left(\phi_1,\phi_2\right)\\:=&i_\lambda^*\Bigg(\mu_1\left[\left(\mu_1^{-\frac{N-2}{4}}P_\lambda U_{\delta_1,\xi_1/\lambda}+\phi_1\right)^{p}-\left(\mu_1^{-\frac{N-2}{4}}P_\lambda U_{\delta_1,\xi_1/\lambda}\right)^{p}-p\,\mu_1^{-1}\left(P_\lambda U_{\delta_1,\xi_1/\lambda}\right)^{p-1}\,\phi_1\right]\\&\quad\,\quad+\beta\lambda^{\frac{6-N}{2}}\phi_1\,\phi_2\Bigg),\\
&\mathcal{N}_2\left(\phi_1,\phi_2\right)\\:=&i_\lambda^{*}\Bigg(\mu_2\left[\left(\mu_2^{-\frac{N-2}{4}}P_\lambda U_{\delta_2,\xi_2/\lambda}+\phi_2\right)^{p}-\left(\mu_2^{-\frac{N-2}{4}}P_{\lambda}U_{\delta_2,\xi_2/\lambda}\right)^{p}-p\,\mu_2^{-1}\left(P_{\lambda}U_{\delta_2,\xi_2/\lambda}\right)^{p-1}\,\phi_2\right]\\&\quad\,\quad+\beta\lambda^{\frac{6-N}{2}}\phi_1\,\phi_2\Bigg).
\end{aligned}
\end{equation}

\vspace{0.2cm}
\section{Proof of the Theorem (\ref{th1-4})}\label{three}

In the following we will assume that $\lambda:=\lambda(\varepsilon)\to 0$ as $\varepsilon\to0$
and
 \begin{equation}\label{suppose-beta}
       \left|\beta\right|= \begin{cases}
       o\left(\lambda|\ln{\lambda}|\right),\quad&N=4,\\
    o\left(\lambda\right),\quad&N=5,\\
    o\left(\lambda/\left|\ln{\lambda}\right|\right),\quad&N=6,\\
  o\left(\lambda^{{N-5}}\right)  ,\quad&N\geq7.\\
        \end{cases}
    \end{equation}

The   first step in the  Lyapunov-Schmidt procedure consist in solving problem \eqref{first-equ} in terms of $\left(\bm{\delta}, \bm{\xi}\right)$
as stated in the following Proposition whose proof is postponed in Appendix \ref{AAA}.

 \begin{Prop}\label{error-size}
 For any $\eta\in(0,1)$, there exists $C>0$ and $\varepsilon_0>0$ such that for any $\varepsilon \in (0,\varepsilon_0)$ and for any $\left(\bm{\delta}, \bm{\xi}\right)\in \mathcal{O}_\eta$, there exists a unique $\bm{\phi} \in K_{\bm{\delta},\bm{\xi}}^{\perp}$ verifying \eqref{first-equ} and
    \begin{align}\label{norm-geq5}
    \left\|\bm{\phi}\right\|_{H}\leq \begin{cases}
     C\,\varepsilon\lambda,\quad&N=4,\\
        C\,\varepsilon\lambda^{\frac{3}{2}},\quad&N=5,\\
        C\left(\lambda^{4}\left|\ln{\lambda}\right|+\varepsilon\,\lambda^{2}\left|\ln{\lambda}\right|\right),\quad&N=6,\\
            C\lambda^{\frac{N+2}{2}},\quad&N\geq 7.
    \end{cases}
\end{align}
\end{Prop}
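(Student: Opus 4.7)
The plan is to reduce \eqref{first-equ} to a fixed-point problem in $K_1^{\perp}\times K_2^{\perp}$ equipped with the norm of $H$, namely
\begin{equation*}
\bm{\phi} = \bm{\mathcal{T}}_{\bm{\delta},\bm{\xi}}(\bm{\phi}) := \bm{\mathcal{L}}_{\bm{\delta},\bm{\xi}}^{-1}\Bigl(\bm{\mathcal{E}}_{\bm{\delta},\bm{\xi}}+\bm{\mathcal{N}}_{\bm{\delta},\bm{\xi}}(\bm{\phi})\Bigr),
\end{equation*}
and to apply the contraction mapping principle in the ball of radius $CR_\varepsilon$, where $R_\varepsilon$ is the right-hand side of \eqref{norm-geq5} for the relevant value of $N$.

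\textbf{Step 1: invertibility of the linearization.} First I would show that $\bm{\mathcal{L}}_{\bm{\delta},\bm{\xi}}: K_1^{\perp}\times K_2^{\perp}\to K_1^{\perp}\times K_2^{\perp}$ is an isomorphism with $\|\bm{\mathcal{L}}_{\bm{\delta},\bm{\xi}}^{-1}\|\leq C$ uniformly in $(\bm{\delta},\bm{\xi})\in\mathcal{O}_\eta$ and $\varepsilon\in(0,\varepsilon_0)$. Proceeding by contradiction, assume sequences $\varepsilon_n\to 0$, $(\bm{\delta}^n,\bm{\xi}^n)\in\mathcal{O}_\eta$ and $\bm{\phi}^n\in K_1^{\perp}\times K_2^{\perp}$ with $\|\bm{\phi}^n\|_H=1$ and $\bm{\mathcal{L}}_{\bm{\delta}^n,\bm{\xi}^n}(\bm{\phi}^n)\to 0$. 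Up to subsequence $(\bm{\delta}^n,\bm{\xi}^n)$ converges in the closure of $\mathcal{O}_\eta$, and after rescaling each $\phi_i^n$ around $y_i^n=\xi_i^n/\lambda$, one passes to the limit using Lemma \ref{embed}, Lemma \ref{ls} and the explicit decay of $U_{\delta_i,y_i}$. The coupling terms in \eqref{linear-op} carry the factor $|\beta|\lambda^{(6-N)/2}$, which tends to $0$ by \eqref{suppose-beta}, so they disappear in the limit. The rescaled limits solve the linearized equation $-\Delta\phi = pU_{\delta_i^\infty,0}^{p-1}\phi$ in $\R^N$ and, being bounded, belong to the well-known $(N+1)$-dimensional kernel $\mathrm{span}\{\psi_i^0,\ldots,\psi_i^N\}$. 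The orthogonality $\phi_i^n\in K_i^{\perp}$ forces the limits to vanish, together with the usual weak-convergence argument to recover strong convergence in $X$, contradicting $\|\bm{\phi}^n\|_H=1$.

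\textbf{Step 2: size of the error and of the nonlinear term.} I would estimate $\|\bm{\mathcal{E}}_{\bm{\delta},\bm{\xi}}\|_H$ using the three components of \eqref{error}. The standard expansion $U_i-P_\lambda U_i = B_N\delta_i^{(N-2)/2}\lambda^{N-2}H(\xi_i,\cdot)+\mathrm{l.o.t.}$ combined with Lemma \ref{embed} (and Lemma \ref{ls} when $s=N$, i.e.\ $N\geq 7$) yields the contribution of $(P_\lambda U_i)^p-U_i^p$ at the order shown in \eqref{norm-geq5}; the linear perturbation $\varepsilon\lambda^2 P_\lambda U_i$ gives $O(\varepsilon\lambda)$ when $N=4$, $O(\varepsilon\lambda^{3/2})$ when $N=5$, $O(\varepsilon\lambda^2|\ln\lambda|)$ when $N=6$ and is of lower order when $N\geq 7$; the coupling $\beta\lambda^{(6-N)/2}P_\lambda U_1 P_\lambda U_2$ is made negligible precisely by the choice \eqref{suppose-beta}. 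Similarly, the elementary inequality $|(a+b)^p-a^p-pa^{p-1}b|\lesssim |b|^{\min(p,2)}+|b|^p$, together with Lemma \ref{embed}--\ref{ls}, gives
\begin{equation*}
\|\bm{\mathcal{N}}_{\bm{\delta},\bm{\xi}}(\bm{\phi})\|_H\lesssim \|\bm{\phi}\|_H^{\min(p,2)}+\|\bm{\phi}\|_H^p + |\beta|\lambda^{(6-N)/2}\|\bm{\phi}\|_H^2,
\end{equation*}
and an analogous Lipschitz bound for $\bm{\mathcal{N}}_{\bm{\delta},\bm{\xi}}(\bm{\phi}_1)-\bm{\mathcal{N}}_{\bm{\delta},\bm{\xi}}(\bm{\phi}_2)$ with a factor that is $o(1)$ on the ball of radius $CR_\varepsilon$.

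\textbf{Step 3: contraction.} Combining Steps 1 and 2, $\bm{\mathcal{T}}_{\bm{\delta},\bm{\xi}}$ sends the closed ball $\{\|\bm{\phi}\|_H\leq CR_\varepsilon\}\subset K_1^{\perp}\times K_2^{\perp}$ into itself and is a contraction for $\varepsilon$ small enough, yielding the unique fixed point $\bm{\phi}$ satisfying \eqref{first-equ} and \eqref{norm-geq5}.

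\textbf{Main obstacle.} The most delicate point is the dimension-dependent choice of the functional space $X$. For $N\geq 7$ the coupling factor $\lambda^{(6-N)/2}$ diverges, so neither the $H^1_0$-norm alone nor the $L^{2^*}$-norm suffices to bound $i_\lambda^*(P_\lambda U_1 P_\lambda U_2)$; one must carefully exploit Lemma \ref{ls} with $s=N$ to absorb the blow-up of $\lambda^{(6-N)/2}$, and then match this with the smallness assumption $|\beta|=o(\lambda^{N-5})$ in \eqref{suppose-beta}. The borderline case $N=4$ is similarly subtle because the expansion of $P_\lambda U_i - U_i$ produces a logarithmic correction that must be tracked through every estimate to obtain the sharper bound $C\varepsilon\lambda$ rather than $C\varepsilon\lambda|\ln\lambda|$.
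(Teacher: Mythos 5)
Your proposal follows essentially the same route as the paper: a contradiction/compactness argument for the uniform invertibility of $\bm{\mathcal{L}}_{\bm{\delta},\bm{\xi}}$ on $K_1^{\perp}\times K_2^{\perp}$ (the paper's Lemma~\ref{invert}), followed by norm estimates on $\bm{\mathcal{E}}_{\bm{\delta},\bm{\xi}}$ and $\bm{\mathcal{N}}_{\bm{\delta},\bm{\xi}}$ via Lemmas~\ref{embed} and~\ref{ls}, and finally the Banach fixed-point theorem on the ball of radius $CR_\varepsilon$. The only minor imprecision is in your closing remark: for $N=4$ the sharp bound $C\varepsilon\lambda$ comes directly from the linear perturbation term $\varepsilon\lambda^2\|U_{\delta_1,\xi_1/\lambda}\|_{4/3}\sim\varepsilon\lambda$, not from eliminating a spurious $|\ln\lambda|$ in the expansion of $P_\lambda U_i - U_i$ (the logarithmic corrections in the error estimate actually enter at $N=6$).
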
 

The second step in the Ljapunov-Schmidt procedure consists in solving  problem \eqref{second-equ}, which in virtue of Proposition \ref{error-size} can be rewritten as
\begin{align}\label{kker}
    \begin{cases}
 u-i_\lambda^*\left[\mu_{1} {g(u)}+\beta\lambda^{\frac{6-N}{2}} uv+\varepsilon\,\lambda^{2} u\right] =\sum\limits_{j=0}^{      N}c_j\,P_\lambda{\psi}^{j}_{\delta_1,\xi_1/\lambda},\\ v-i_\lambda^*\left[\mu_{2} {g(v)}+\beta\lambda^{\frac{6-N}{2}} uv+\varepsilon\,\lambda^{2}v\right] =\sum\limits_{j=0}^{N}d_j\,P_\lambda{\psi}^{j}_{\delta_2,\xi_2/\lambda}.
    \end{cases}
    \end{align}
The goal is finding suitable $(\bm{\delta}, \bm{\xi})\in\mathcal{O}_\eta$ such that  all the constants $c_j$ and $d_j$ are equal to zero, for $j=0,1,\ldots,N$, that is
\begin{align}\label{first-5}
    0=& \left\langle\mu_i^{-\frac{N-2}{4}}P_{\lambda}U_{\delta_i,\xi_i/\lambda}+\phi_i,P_{\lambda}\psi^{j}_{\delta_i,\xi_i/\lambda}\right\rangle_{X}\nonumber\\&-\left\langle i_\lambda^{*}\left(\mu_{i} {g\left(\mu_i^{-\frac{N-2}{4}}P_{\lambda}U_{\delta_i,\xi_i/\lambda}+\phi_i\right)}\right),P_{\lambda}\psi^{j}_{\delta_i,\xi_i/\lambda}\right\rangle_{X}\nonumber\\
    &-\left\langle i_\lambda^{*}\left(\varepsilon\lambda^{2}\left( \mu_i^{-\frac{N-2}{4}}P_{\lambda}U_{\delta_i,\xi_i/\lambda}+\phi_i\right)\right),P_{\lambda}\psi^{j}_{\delta_i,\xi_i/\lambda}\right\rangle_{X}\nonumber\\&-\left\langle i_\lambda^{*}\left(\beta\lambda^{\frac{6-N}{2}} \left(\mu_i^{-\frac{N-2}{4}}P_{\lambda}U_{\delta_i,\xi_i/\lambda}+\phi_i\right)\,\left(\sum_{j\neq i}\mu_j^{-\frac{N-2}{4}}P_{\lambda}U_{\delta_j,\xi_j/\lambda}+\phi_j\right)\right),P_{\lambda}\psi^{j}_{\delta_i,\xi_i/\lambda}\right\rangle_{X},
\end{align}
where $g(\omega)=\left(\omega^{+}\right)^{p}$, $p=2^*-1$.

    We would like to point out that,  the strategies for solving  \eqref{first-5} with $N=4$ is different from the higher dimensional cases $N\geq 5$, owing to the rate of the error term $\|\bm\phi\|_{H}$ obtained in Proposition \ref{error-size} is not small enough.  
    \\
    
 The proof when $N\geq5$ relies on the following Proposition whose proof is postponed in Appendix \ref{BBB}.
\begin{Prop}
    \label{supercritical} Let $N\geq 5$. For $i,l=1,2$ with $i\neq l $ and  if $ |\beta|$ verifies \eqref{suppose-beta}, then
\begin{equation}\label{7-0}
\begin{split}
    \text{RHS of \eqref{first-5}}=& {a}_i\,\Phi_\Omega(\xi_i)\,\lambda^{N-2}-{b}_i\,\varepsilon\,\lambda^{2}+o(\lambda^{N-2}).
    \end{split}
\end{equation}
Moreover, for $j=1,\cdots,N$, 
\begin{equation}\label{7-n}
\begin{split}
    \text{RHS of \eqref{first-5}}=&{t}_i\,\lambda^{N-1}\left(\frac{\partial }{\partial\xi_{i}^{j}}\Phi_\Omega(\xi_i)\right)\,+o(\varepsilon\lambda^{3})+o(\lambda^{N-1}).
    \end{split}
\end{equation}
Here the positive constants $ {a}_i$, $ {b}_i$ and $ {t}_i$ are given by
\begin{align*}
    {a}_i:=\frac{N-2}{2}\mu_{i}^{-\frac{N-2}{4}}A^2\delta_i^{N-3},\quad
   {b}_i:=\mu_{i}^{-\frac{N-2}{4}}\delta_i\,B,\quad {t}_i:=\mu_{i}^{-\frac{N-2}{4}}A^2\delta_i^{N-2},\quad i=1,2.
\end{align*}
\end{Prop}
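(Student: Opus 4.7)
My plan is to substitute the ansatz \eqref{desired} into the right-hand side of \eqref{first-5}, expand each of the four inner products via the adjoint identity $\langle i_\lambda^*(f), P_\lambda\psi_i^j\rangle_X=\int_{\Omega_\lambda} f\,P_\lambda\psi_i^j\,dx$, and identify the leading asymptotics as $\varepsilon\to 0$. The $\bm\phi$-dependent remainders will then be absorbed by the bound \eqref{norm-geq5} from Proposition \ref{error-size}, while the coupling contribution will be absorbed by the smallness hypothesis \eqref{suppose-beta}.

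For the dominant nonlinear piece, I would combine $\langle \mu_i^{-(N-2)/4}P_\lambda U_i+\phi_i,P_\lambda\psi_i^j\rangle_X$ with the term $-\langle i_\lambda^*(\mu_i g(u)),P_\lambda\psi_i^j\rangle_X$. Using that $P_\lambda U_i$ solves the projected limit equation together with a Taylor expansion of $g$ around $\mu_i^{-(N-2)/4}P_\lambda U_i$, this combination reduces, modulo quadratic and higher remainders in $\phi_i$, to the classical integral
\begin{equation*}
p\,\mu_i^{-(N-2)/4}\int_{\Omega_\lambda}U_i^{p-1}\bigl(U_i-P_\lambda U_i\bigr)\psi_i^j\,dx.
\end{equation*}
The standard Brezis--Nirenberg/Han/Rey expansion of $U_i-P_\lambda U_i$ in terms of the regular part $H$ of the Green's function, combined with the change of variables $x=\xi_i+\lambda y$, yields the term $a_i\Phi_\Omega(\xi_i)\lambda^{N-2}$ for $j=0$ (direct evaluation at the concentration point) and $t_i\lambda^{N-1}\partial_{\xi_i^j}\Phi_\Omega(\xi_i)$ for $j=1,\ldots,N$; in the latter case the parity of $\psi_i^j$ kills the leading-order piece, and a Taylor expansion of $H$ at $\xi_i$ extracts its gradient. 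Hölder estimates combined with \eqref{norm-geq5} then show that the $\phi_i$-remainders are $o(\lambda^{N-2})$ and $o(\lambda^{N-1})$, respectively.

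The linear perturbation $-\langle i_\lambda^*(\varepsilon\lambda^2 u),P_\lambda\psi_i^j\rangle_X$ produces $-b_i\varepsilon\lambda^2$ for $j=0$, via the explicit integral of $P_\lambda U_i\,P_\lambda\psi_i^0$, while parity considerations force the $j\geq 1$ case to be of order $O(\varepsilon\lambda^3)$. The coupling piece $-\langle i_\lambda^*(\beta\lambda^{(6-N)/2}uv),P_\lambda\psi_i^j\rangle_X$ is estimated via integrals of the form $\int U_1 U_2\,|\psi_i^j|$, exploiting the separation $|\xi_1-\xi_2|\geq 2\eta$ and the decay of the bubbles; the mixed terms involving $\phi_l$ are controlled by Lemmas \ref{embed} and \ref{ls}, which is precisely why the ambient space $X$ is enlarged with an $L^s$-component in the supercritical regime.

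The main obstacle will be calibrating the coupling contribution uniformly across dimensions. For $N\geq 7$ the prefactor $\lambda^{(6-N)/2}$ diverges, and the hypothesis $|\beta|=o(\lambda^{N-5})$ is tuned exactly so that, after multiplication by the $L^1$- or $L^s$-controlled integrals of the bubble products, the coupling stays $o(\lambda^{N-2})$ for $j=0$ and $o(\lambda^{N-1})$ for $j\geq 1$. For $N=5,6$ the logarithmic sharpness of \eqref{suppose-beta} is similarly needed to defeat borderline divergences of the relevant bubble integrals. A second delicate point is the $j\geq 1$ case, where one must squeeze an extra power of $\lambda$ out of the symmetry of $\psi_i^j$ in order to land on $\lambda^{N-1}\nabla\Phi_\Omega(\xi_i)$ rather than on $\lambda^{N-2}\Phi_\Omega(\xi_i)$; this forces a careful second-order Taylor expansion of both $H$ and $g$, and is the source of the $o(\varepsilon\lambda^3)$ error in \eqref{7-n}.
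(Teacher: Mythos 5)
Your plan is correct and follows essentially the same route as the paper: decompose the right-hand side of \eqref{first-5} into the bubble self-interaction, the $\varepsilon\lambda^2$ piece, the $\beta$-coupling piece, and the $\bm\phi$-remainders; extract the Robin-function asymptotics from the Green's function expansion of $U - P_\lambda U$; absorb the remainders via \eqref{norm-geq5} and \eqref{suppose-beta}; and for $j\geq 1$ use the parity of $\psi_i^j$ to gain an extra power of $\lambda$. The one minor difference is that you propose to Taylor-linearize $U_i^p - (P_\lambda U_i)^p$ down to $p\,U_i^{p-1}(U_i - P_\lambda U_i)$, whereas the paper sidesteps the linearization (delicate when $p=2^*-1<2$, i.e.\ $N>6$) by invoking the pre-computed asymptotic lemmas \eqref{main-0}--\eqref{pmain-j} for $\int U^p P_\lambda\psi^j$ and $\int (P_\lambda U)^p P_\lambda\psi^j$ separately and subtracting; both give the same constants $a_i$, $t_i$. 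The paper otherwise proceeds exactly as you describe, organizing the expansion into ten explicit pieces $E_1$--$E_{10}$ (resp.\ $Q_1$--$Q_{10}$) and checking each one dimension by dimension, with the cross term $E_8$ (resp.\ $Q_8$) being the one that ultimately dictates the sharp exponent in \eqref{suppose-beta}, as you anticipated.
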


    \begin{proof} [\textbf{Proof of  Theorem \ref{th1-4}}: completed]
     We argue exactly as in   \cite[Proof of Theorem 0.2]{musso}, taking into account Proposition \ref{error-size}.\end{proof}

\medskip

It only remains to study the case 
 $N=4$. We argue as in \cite{piro}.
Firstly, we present a sufficient condition which ensures that all the $c_j$ and $d_j$ in \eqref{kker} are zero. 
{\begin{Lem}\label{lem4.1}
    If the following identities hold true,
    \begin{equation} \label{4.2}
    \begin{cases}
       \displaystyle{\int_{\Omega_\lambda}}\left(-\Delta {u}-\varepsilon\,\lambda^{2} {u}-\mu_{1} {u}^{p}-\beta\,\lambda^{\frac{6-N}{2}} \,{u}\,{v}\right) P_\lambda{\psi}^{0}_{\delta_1,\xi_1/\lambda} dy= 0,\\
        \displaystyle{\int_{\Omega_\lambda}}\left(-\Delta {v}-\varepsilon\,\lambda^{2} {v}-\mu_{2} {v}^{p}-\beta\,\lambda^{\frac{6-N}{2}} \,{u}\,{v}\right) P_\lambda{\psi}^{0}_{\delta_2,\xi_2/\lambda} dy= 0, 
    \end{cases}     
    \end{equation}
    and for $k= 1,\ldots,4$,
   \begin{align}\label{4.3}
        \begin{cases}
        \int_{B_{\rho/\lambda}(\xi_1/\lambda)} \left(-\Delta {u}-\varepsilon\,\lambda^{2} {u}-\mu_{1} {u}^{p}-\beta\,\lambda^{\frac{6-N}{2}} \,{u}\,{v}\right)  \partial_k u\,dy = 0, \\
      \int_{B_{\rho/\lambda}(\xi_2/\lambda)} \left(-\Delta {v}-\varepsilon\,\lambda^{2} {v}-\mu_{2} {v}^{p}-\beta\,\lambda^{\frac{6-N}{2}} \,{u}\,{v}\right)  \partial_k v\,dy = 0,
    \end{cases}   
   \end{align} 
  where for some $\rho> 0$ and for any $y\in \Omega_\lambda$, the functions $\partial_{k}u(y):=\frac{\partial u(y)}{\partial y_k}$, $\partial_{k}v(y):=\frac{\partial v(y)}{\partial y_k}$. Then the constants $c_j=d_j= 0$, for any $j= 0, 1, \ldots, 4$.
\end{Lem}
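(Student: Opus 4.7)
The plan is to convert \eqref{4.2}--\eqref{4.3} into an $(N+1)\times(N+1)$ linear system for $(c_0,\ldots,c_N)$, and an independent one for $(d_0,\ldots,d_N)$, whose coefficient matrix is diagonal up to a small error. Applying $-\Delta$ to the first line of \eqref{kker} and using $-\Delta\bigl(i_\lambda^*(f)\bigr) = f$ together with $-\Delta\bigl(P_\lambda \psi^j_{\delta_1,\xi_1/\lambda}\bigr) = p\,U_{\delta_1,\xi_1/\lambda}^{\,p-1}\,\psi^j_{\delta_1,\xi_1/\lambda}$ (see \eqref{limit-def}--\eqref{ker-ele}) gives
\begin{equation*}
-\Delta u - \varepsilon\lambda^2 u - \mu_1 u^p - \beta\lambda^{(6-N)/2} uv \;=\; \sum_{j=0}^{N} c_j\, p\, U_1^{p-1}\,\psi^j_1,
\end{equation*}
with a corresponding identity for $v$ and the $d_j$. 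Inserting into \eqref{4.2} yields $\sum_{j=0}^{N} c_j\, \langle P_\lambda \psi^j_1,\, P_\lambda \psi^0_1\rangle_{H^1_0(\Omega_\lambda)} = 0$, while inserting into \eqref{4.3} gives, for each $k = 1,\ldots,N$,
\begin{equation*}
\sum_{j=0}^{N} c_j \int_{B_{\rho/\lambda}(\xi_1/\lambda)} p\, U_1^{p-1}\,\psi^j_1\,\partial_k u\; dy = 0.
\end{equation*}

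To show the resulting matrix $M = (M_{kj})_{k,j=0}^N$ is invertible, I would use the Ansatz $u = \mu_1^{-(N-2)/4} P_\lambda U_1 + \phi_1$ together with the scaling identity $\partial_k U_1 = -\psi^k_1$ (since $U_1(y)$ depends only on $y - \xi_1/\lambda$), so that
\begin{equation*}
\partial_k u \;=\; -\mu_1^{-(N-2)/4}\,\psi^k_1 \;+\; \mu_1^{-(N-2)/4}\,\partial_k(P_\lambda U_1 - U_1) \;+\; \partial_k \phi_1.
\end{equation*}
The second term is $O(\lambda^{N-2})$ pointwise on $B_{\rho/\lambda}(\xi_1/\lambda)$, because $U_1 - P_\lambda U_1$ is the harmonic extension of $U_1|_{\partial\Omega_\lambda}$, while the $\phi_1$ contribution is controlled via Proposition \ref{error-size} combined with the orthogonality $\langle \phi_1, P_\lambda \psi^j_1\rangle_X = 0$ after an integration by parts. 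The leading part of $M_{kj}$ for $k \ge 1$ is then $-\mu_1^{-(N-2)/4}\int_{\mathbb{R}^N} p\,U_1^{p-1}\psi^j_1 \psi^k_1\, dy$, and for the $k=0$ row one has $\lim_{\lambda\to 0}\langle P_\lambda \psi^j_1, P_\lambda\psi^0_1\rangle_{H^1_0} = \langle \psi^j, \psi^0\rangle_{D^{1,2}(\mathbb{R}^N)}$. By the classical orthogonality relations $\int_{\mathbb{R}^N} p\,U^{p-1}\psi^j\psi^k\,dy = \kappa_j\,\delta_{jk}$ with $\kappa_j > 0$, the matrix $M$ is diagonal plus $o(1)$, hence invertible for $\varepsilon$ small, which forces $c_0 = \cdots = c_N = 0$. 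Repeating the argument on the second line of \eqref{kker} yields $d_0 = \cdots = d_N = 0$.

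The main technical obstacle is the ball truncation in \eqref{4.3}: since $\partial_k u$ does not vanish on $\partial\Omega_\lambda$, one cannot test with it globally, and the substitution $\partial_k u \leftrightarrow -\mu_1^{-(N-2)/4}\psi^k_1$ generates three kinds of remainders that must be shown to be $o(1)$. These are: (i) the far-field tail $\int_{\mathbb{R}^N \setminus B_{\rho/\lambda}(\xi_1/\lambda)} p\,U_1^{p-1}\psi^j_1 \psi^k_1\,dy$, controlled by the polynomial decay $|U_1^{p-1}\psi^j_1 \psi^k_1| = O(|y-\xi_1/\lambda|^{-(3N-4)})$ at infinity; (ii) the contribution from the harmonic corrector $\partial_k(P_\lambda U_1 - U_1)$ paired against $p\,U_1^{p-1}\psi^j_1$, handled via pointwise estimates on $P_\lambda U_1 - U_1$ of order $\lambda^{N-2}$ in the bulk; and (iii) boundary fluxes on $\partial B_{\rho/\lambda}(\xi_1/\lambda)$ arising when $\partial_k \phi_1$ is integrated by parts, absorbed using Proposition \ref{error-size} together with the trace/elliptic estimates on $\phi_1$.
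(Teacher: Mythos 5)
Your reduction of \eqref{4.2}--\eqref{4.3} to a homogeneous, decoupled, nearly diagonal linear system $M\mathbf{c}=0$, $M'\mathbf{d}=0$, with invertibility coming from the classical $L^2(pU^{p-1}\,dy)$-orthogonality of the $\psi^j$'s, is precisely the argument that the paper defers to by citing \cite[Lemma 4.1]{piro}, and it is sound. Two harmless slips worth fixing: the far-field decay of $U_1^{p-1}\psi_1^j\psi_1^k$ is $|y|^{-(2N+2)}$ for $j,k\geq 1$ (and $|y|^{-(2N+1)}$ when one index is $0$), not $|y|^{-(3N-4)}$, and $\partial_k(P_\lambda U_1-U_1)=O(\lambda^{N-1})$ in the bulk, one order better than the $O(\lambda^{N-2})$ you state; moreover, for invertibility alone the $\phi_1$ contribution $\int_{B_{\rho/\lambda}}pU_1^{p-1}\psi_1^j\,\partial_k\phi_1$ is already $O(\|\bm{\phi}\|_H)=O(\varepsilon\lambda)=o(1)$ by H\"older and Sobolev on the ball, so the integration-by-parts/boundary-flux machinery you invoke in item (iii) is unnecessary at this stage (it is genuinely needed only later, in Lemma \ref{newlemma}, where the precise leading term of the local Pohozaev identity is extracted).
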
}
\begin{proof}
    The proof can be obtained by using similar arguments as in \cite[Lemma 4.1]{piro}. 
\end{proof}
\medskip
  {To address \eqref{4.3}, we introduce the following basic Lemma.}
  {\begin{Lem} \cite[Lemma 4.4]{piro}\label{lemtool}
    If there exists $C_1 > 0$ such that
    \begin{equation*}
    \int_{B(\xi,\eta_1) \setminus B(\xi,\eta_2)} |f(x)| \, dx \leq C_1,
    \end{equation*}
    then there exist $C_2 > 0$ and $\bar{\eta} \in (\eta_1, \eta_2)$ such that
    \begin{equation*}\label{eq:lemma44_2}
    \int_{\partial B(\xi,\bar{\eta})} |f(x)| \, d\sigma \leq C_2.
    \end{equation*}
    \end{Lem}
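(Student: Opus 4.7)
The statement is a standard slicing / mean-value argument: the plan is to foliate the annular region $B(\xi,\eta_1)\setminus B(\xi,\eta_2)$ by concentric spheres and then locate a ``good'' radius using the mean value theorem for integrals (equivalently, a Chebyshev/pigeonhole argument).

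Without loss of generality I assume $\eta_2<\eta_1$ (the symmetric case is identical, which also justifies the notation $\bar\eta\in(\eta_1,\eta_2)$ in the statement). Parametrizing points in the annulus by polar coordinates $x=\xi+r\omega$ with $r\in(\eta_2,\eta_1)$ and $\omega\in S^{N-1}$ and applying Fubini, I would rewrite the volume integral as an iterated integral
\begin{equation*}
\int_{B(\xi,\eta_1)\setminus B(\xi,\eta_2)}|f(x)|\,dx=\int_{\eta_2}^{\eta_1}g(r)\,dr,\qquad g(r):=\int_{\partial B(\xi,r)}|f|\,d\sigma.
\end{equation*}
The hypothesis then reads $\int_{\eta_2}^{\eta_1}g(r)\,dr\le C_1$.

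Now set $C_2:=\tfrac{2C_1}{\eta_1-\eta_2}$ (any constant strictly larger than $C_1/(\eta_1-\eta_2)$ suffices). If one supposes, for contradiction, that $g(r)>C_2$ for every $r\in(\eta_2,\eta_1)$, then integrating over the same interval gives
\begin{equation*}
C_1\ge\int_{\eta_2}^{\eta_1}g(r)\,dr>C_2(\eta_1-\eta_2)=2C_1,
\end{equation*}
which is a contradiction. Hence there exists $\bar\eta\in(\eta_2,\eta_1)$ with $g(\bar\eta)\le C_2$, which is precisely the desired inequality. Equivalently, Chebyshev's inequality shows that the ``bad'' set $\{r:g(r)>C_2\}$ has Lebesgue measure strictly less than $\eta_1-\eta_2$, so its complement in $(\eta_2,\eta_1)$ is nonempty.

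There is no real obstacle in this argument; the only analytic input is the use of polar coordinates via Fubini, which is automatic once $f\in L^1$ of the annulus (a consequence of the hypothesis). In the context of the paper this lemma will be applied to the integrands arising in the local Pohozaev identities \eqref{4.3} when $N=4$: it supplies a radius $\bar\eta$ on which the spherical boundary integrals of the relevant error terms are uniformly controlled, allowing one to trade volume integrals against cut-off test functions for cleaner boundary integrals on a well-chosen sphere.
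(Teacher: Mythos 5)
Your proof is correct and is the standard mean-value/Chebyshev argument via polar coordinates (coarea slicing followed by pigeonhole). The paper itself states this lemma without proof, citing \cite[Lemma 4.4]{piro}, and the argument you give is precisely the canonical one; there is nothing to add or correct.
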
}

\vspace{0.2cm}
Now, we compute the LHS of \eqref{4.2} and \eqref{4.3},  respectively.
\begin{Lem}\label{first}Let $N=4$. For $j=0$ and if ${|\beta|}$ verifies \eqref{suppose-beta}, then
    \begin{align}
&  \displaystyle{\int_{\Omega_\lambda}}\left(-\Delta {u}-\varepsilon\,\lambda^{2} {u}-\mu_{1} {u}^{p}-\beta\,\lambda^{\frac{6-N}{2}} \,{u}\,{v}\right) P_\lambda{\psi}^{0}_{\delta_1,\xi_1/\lambda} dy\nonumber\\
=&A_{1}\,\lambda^2\,\Phi_\Omega(\xi_1) -B_{1}\,\varepsilon\,\lambda^2\left|\ln{\lambda}\right|+o(\lambda^2)+o\left(\varepsilon\,\lambda^2\left|\ln{\lambda}\right|\right),\label{po1}\\
&  \displaystyle{\int_{\Omega_\lambda}}\left(-\Delta {v}-\varepsilon\,\lambda^{2} {v}-\mu_{2} {v}^{p}-\beta\,\lambda^{\frac{6-N}{2}} \,{u}\,{v}\right) P_\lambda{\psi}^{0}_{\delta_2,\xi_2/\lambda} dy\nonumber\\
=&A_{2}\,\lambda^2\,\Phi_\Omega(\xi_2) -B_{2}\,\varepsilon\,\lambda^2\left|\ln{\lambda}\right|+o(\lambda^2)+o\left(\varepsilon\,\lambda^2\left|\ln{\lambda}\right|\right),\label{po11}\end{align}
where the positive constants
$$A_{i}:=\mu_{i}^{-\frac{1}{2}}\,\left(\int_{\mathbb{R}^N}U_{1,0}^{3}\right)^{2}>0,\quad i=1,2,$$
and 
$ B_{i}>0$ is related to the integral $$C_4\,\delta_1\mu^{-\frac{1}{2}}_{i}\int_{B_{R/(\lambda\delta_i)}} \frac{|y|^2}{(1+|y|^2)^3}dy=C_4\,\delta_1\mu^{-\frac{1}{2}}_{i}\omega_3\,|\ln \lambda|+o\left(|\ln \lambda|\right),\quad \text{for some $R>0$}.$$

\end{Lem}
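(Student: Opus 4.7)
The plan is to substitute the decomposition $u = \mu_1^{-1/2}P_\lambda U_{\delta_1,\xi_1/\lambda} + \phi_1$, and analogously for $v$, and split the LHS of \eqref{po1} into three pieces:
\[
I_\text{pr} = \int_{\Omega_\lambda}(-\Delta u - \mu_1 u^p)P_\lambda\psi_{\delta_1,\xi_1/\lambda}^0\,dy,\quad I_\text{lin}=-\varepsilon\lambda^2\int u P_\lambda\psi_1^0\,dy,\quad I_\text{coup}=-\beta\lambda\int uv\,P_\lambda\psi_1^0\,dy.
\]
Throughout I will use $\|\bm\phi\|_H\le C\varepsilon\lambda$ from Proposition \ref{error-size} together with the orthogonality $\phi_1 \in K_1^\perp$, which in $N=4$ (where $X=H_0^1(\Omega_\lambda)$) amounts to $\int U_1^{p-1}\psi_1^0\phi_1\,dy = 0$.

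For $I_\text{pr}$ I would integrate by parts and use $-\Delta P_\lambda\psi_1^0 = pU_1^{p-1}\psi_1^0$ to get $\int(-\Delta u)P_\lambda\psi_1^0\,dy = p\int U_1^{p-1}\psi_1^0 u\,dy$, whose $\phi_1$-part vanishes by orthogonality. Taylor-expanding $u^3 = (W_1+\phi_1)^3$ with $W_1 := \mu_1^{-1/2}P_\lambda U_1$, the leading linear-in-$\phi_1$ term $3\int(P_\lambda U_1)^2\phi_1 P_\lambda\psi_1^0$ is reduced via $(P_\lambda U_1)^2 P_\lambda\psi_1^0 = U_1^2\psi_1^0 + O(\lambda^2)$ and a second application of orthogonality to an $o(\lambda^2)$ error; the quadratic and cubic remainders are $O(\varepsilon^2\lambda^2)$ and $O(\varepsilon^3\lambda^3)$ respectively by Sobolev and Hölder. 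The principal piece
\[
\mu_1^{-1/2}\int_{\Omega_\lambda}\bigl[pU_1^{p-1}\psi_1^0 P_\lambda U_1 - (P_\lambda U_1)^3 P_\lambda\psi_1^0\bigr]\,dy
\]
is then handled by inserting $P_\lambda U_1 = U_1 - C_4\delta_1\lambda^2 H(\xi_1,\lambda\cdot) + o(\lambda^2)$ (and the analogous expansion of $P_\lambda\psi_1^0$), producing $A_1\lambda^2\Phi_\Omega(\xi_1) + o(\lambda^2)$ via the standard Rey-type manipulations of \cite{Rey90,musso}, with $A_1 = \mu_1^{-1/2}(\int U_{1,0}^3)^2$.

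For $I_\text{lin}$ the $\phi_1$-contribution is bounded by Hölder together with the measure estimate $|\Omega_\lambda| = O(\lambda^{-4})$, yielding $O(\varepsilon^2\lambda^2|\ln\lambda|^{1/2}) = o(\varepsilon\lambda^2|\ln\lambda|)$. The main piece $-\varepsilon\lambda^2\mu_1^{-1/2}\int P_\lambda U_1\cdot P_\lambda\psi_1^0\,dy$ is then tackled by truncating the integral to $B_{R/(\lambda\delta_1)}(\xi_1/\lambda)$: on the ball the integrand coincides with $U_1\psi_1^0$ up to lower-order terms and produces the logarithmic divergence $C_4\delta_1\omega_3|\ln\lambda| + O(1)$ explicitly identified in the statement, while the complementary contribution is controlled by Lemma \ref{lemtool}. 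Together this gives $-B_1\varepsilon\lambda^2|\ln\lambda| + o(\varepsilon\lambda^2|\ln\lambda|)$. For $I_\text{coup}$, the concentration of $u,v$ at the distinct points $\xi_1/\lambda,\xi_2/\lambda$ forces $v(y) = O(\lambda^{N-2})$ uniformly away from $\xi_2/\lambda$, and symmetrically for $u$ near $\xi_2/\lambda$, so a regional split yields $\int uv P_\lambda\psi_1^0 = O(\lambda^2)$. Combined with \eqref{suppose-beta}, $|I_\text{coup}| = o(\lambda^3|\ln\lambda|)$, which is absorbed into the announced remainder. The identity \eqref{po11} follows by the same argument with the indices $1 \leftrightarrow 2$ and $\mu_1\leftrightarrow\mu_2$.

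The hard part is the $N=4$ logarithmic divergence of the $L^2$-pairing $\int P_\lambda U_1 P_\lambda\psi_1^0$: extracting the explicit constant $B_1$ requires the truncation to $B_{R/(\lambda\delta_1)}(\xi_1/\lambda)$ together with a careful split of the remainder via Lemma \ref{lemtool}, and every $\phi_1$-dependent term must be tracked down to the correct $o(\varepsilon\lambda^2|\ln\lambda|)$ threshold, since the rate $\|\bm\phi\|_H = O(\varepsilon\lambda)$ is only marginally small enough. This is precisely the reason the present lemma cannot be subsumed in Proposition \ref{supercritical}.
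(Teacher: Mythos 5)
Your decomposition into principal, linear, and coupling pieces, the use of the orthogonality of $\phi_1$ to cancel the leading linear-in-$\phi_1$ contribution, the Rey-type expansion of $P_\lambda U_1$ and $P_\lambda\psi^0_1$ to extract $A_1\lambda^2\Phi_\Omega(\xi_1)$, and the reliance on $\|\bm\phi\|_H = O(\varepsilon\lambda)$ to close the remainder estimates are all essentially the same as the paper's argument (which organizes the RHS of \eqref{first-5} into $E_1$--$E_{10}$ and quotes Appendix \ref{BBB} for the leading pieces). Two small imprecisions worth correcting: in dimension $4$ the bubble fails to be $L^2$, so the coupling integral satisfies $\int_{\Omega_\lambda} uv\,P_\lambda\psi^0_1 = O(\lambda^2|\ln\lambda|)$ rather than $O(\lambda^2)$ (the missing $|\ln\lambda|$ is harmless because \eqref{suppose-beta} then yields $|I_{\mathrm{coup}}| \lesssim |\beta|\lambda^3|\ln\lambda| = o(\lambda^2)$, which is exactly the paper's $E_3$ bound); and Lemma \ref{lemtool} is an averaging device for surface Pohozaev integrals and does not apply to the volume integral $\int P_\lambda U_1 P_\lambda\psi^0_1$ — the complementary region contributes a plain $O(1)$ because the integrand decays like $|y-\xi_1/\lambda|^{-4}$ there, which already suffices for the $o(|\ln\lambda|)$ error.
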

\begin{proof}
  We only show the estimates of \eqref{po1}, and \eqref{po11} can be derived by similar arguments. Note that \eqref{po1} is equivalent to \eqref{first-5} with $j=0$. Then using similar arguments as the terms $E_1$-$E_{10}$ given in Appendix \ref{BBB}, we derive that
\begin{equation}
    \label{M1}
    E_1=
        A_{1}\,\lambda^{2}\,\Phi_{\Omega}(\xi_1) +o\left(\lambda^{2}\right),
\end{equation}
and 
\begin{equation}
    \label{M2}
    E_2
    =-B_{1}\,\varepsilon\, \lambda^2|\ln{{\lambda}}|+o\left(\varepsilon\, 
  \lambda^2|\ln{{\lambda}}|\right).
   \end{equation}
For $E_3$, since 
\begin{align}\label{4-truth}
    \frac{\partial U_{\lambda\delta_1, \xi_1}(x)}{\partial \left(\lambda\delta_1\right)}\lesssim \frac{1}{\lambda}\,U_{\lambda\delta_1,\xi_1},
\end{align}
{then by the condition on $\beta$ given in \eqref{suppose-beta}, a direct computation yields that}
\begin{align}\label{M3}
\left|E_3\right|\lesssim
&\left|\beta\right|\,\displaystyle{\int_{\Omega}}\,U^2_{\lambda\delta_1,\xi_1}\,U_{\lambda\delta_2,\xi_2} dx\nonumber\\
=&\left|\beta\right|\,\left(\displaystyle{\int_{B_{\eta}(\xi_1)}}+\displaystyle{\int_{B_{\eta}(\xi_2)}}+\displaystyle{\int_{\Omega\backslash \left(B_{\eta}(\xi_1)\cup B_{\eta}(\xi_2)\right)}}\,U^2_{\lambda\delta_1,\xi_1}\,U_{\lambda\delta_2,\xi_2} dx\right)\nonumber\\
\lesssim&|\beta|\lambda^3|\ln{{\lambda}}|=o\left(\lambda^2\right).
\end{align}

\medskip
   Now taking into account the estimates of $\|\bm{\phi}\|_{H}$ obtained in Proposition \ref{error-size}, we compute the remaining terms. For $E_4$, by \eqref{varphi-1},
   \begin{align*}
       |E_{4}|\lesssim \left(\displaystyle{\int}_{\Omega}U^2_{\lambda\delta_1,\xi_1}\,\varphi^2_{\lambda\delta_1,\xi_1}\right)^{\frac{1}{2}}\|\bm{\phi}\|_{H}\lesssim\lambda^2|\ln{\lambda}|^{\frac{1}{2}}\|\bm{\phi}\|_{H}=o\left(\lambda^2\right).
   \end{align*}
 For $E_5$, by \eqref{varphi-2}, we have
\begin{align*}
    |E_5|\lesssim\lambda\left(\displaystyle{\int}_{\Omega}\left|P \psi_{\lambda\delta_1, \xi_1}^0-\psi_{\lambda\delta_1, \xi_1}^0\right|^4\right)^{\frac{1}{4}}\|\bm{\phi}\|_{H}\lesssim\lambda^2\|\bm{\phi}\|_{H}=\left(\lambda^2\right).
\end{align*}
For $E_6$, from \eqref{4-truth}, we get that
\begin{align*}
|E_6|\lesssim\varepsilon\,\lambda\left(\displaystyle{\int}_{\Omega}\left|\psi_{\lambda\delta_1, \xi_1}^0\right|^\frac{4}{3}\right)^{\frac{3}{4}}\|\bm{\phi}\|_{H}\lesssim\varepsilon\,\lambda\|\bm{\phi}\|_{H}=o\left(\lambda^2\right).
\end{align*}
Moreover, for $E_7$, 
\begin{align*}
    |E_7|\lesssim &|\beta|\lambda\,\|P_\lambda U_{\delta_2, \xi_2/\lambda}\,P_\lambda \psi_{\delta_1, \xi_1/\lambda}^0\|_{L^{\frac{4}{3}}(\Omega_\lambda)}\,\|\bm{\phi}\|_{H}\\
    \lesssim&|\beta|\,\left(\displaystyle{\int}_{\Omega}U^\frac{4}{3}_{\lambda\delta_2, \xi_2}(x)\,U^\frac{4}{3}_{\lambda\delta_1, \xi_1}(x)\,dx\right)^{\frac{3}{4}}\|\bm{\phi}\|_{H}\lesssim|\beta|\,\lambda^2\|\bm{\phi}\|_{H}=o\left(\lambda^2\right).
\end{align*}
Similarly, for $E_8$, we have
\begin{align*}
    |E_8|\lesssim &|\beta|\lambda\,\|P_\lambda U_{\delta_1, \xi_1/\lambda}\,P_\lambda \psi_{\delta_1, \xi_1/\lambda}^0\|_{L^{\frac{4}{3}}(\Omega_\lambda)}\,\|\bm{\phi}\|_{H}\\
    \lesssim&|\beta|\,\left(\displaystyle{\int}_{\Omega}U^\frac{8}{3}_{\lambda\delta_1, \xi_1}(x)\,dx\right)^{\frac{3}{4}}\|\bm{\phi}\|_{H}\lesssim|\beta|\,\lambda \|\bm{\phi}\|_{H}=o\left(\lambda^2\right).
\end{align*}

   In addition, we compute the nonlinear terms $E_9$ and $E_{10}$ involving the remainder term $\bm{\phi}$. Indeed,
   \begin{align*}
     |E_9|+|E_{10}|\lesssim  \|\bm{\phi}\|^2_{H}+|\beta|\left(\displaystyle{\int}_{\Omega}U^2_{\lambda\delta_1, \xi_1}(x)\right)^{\frac{1}{2}}\|\bm{\phi}\|^2_{H}=o\left(\lambda^2\right).
   \end{align*}

Thus, from \eqref{M1}, \eqref{M2}, \eqref{M3}  and the estimates of $E_4$-$E_{10}$, we derive \eqref{po1} holds. 

    \end{proof}
    
\vspace{0.2cm}

   In the following, we calculate the  local Pohozaev identities \eqref{4.3} to solve the reduced problem. Note that by the definition of $(u,v)$ given in \eqref{u-w}, \eqref{4.3} reduces to 
   \begin{equation}\label{prime}
        \begin{cases}
         \lambda\displaystyle\int_{B(\xi_1,\rho)} \left(-\Delta w_1 - w_1^3 - \varepsilon w_1 -\beta w_1\,w_2\right) \partial_j w_1\, dx = 0, \\
       \lambda\displaystyle \int_{B(\xi_2,\rho)} \left(-\Delta w_2 - w_2^3 - \varepsilon w_2 -\beta w_1\,w_2\right) \partial_j  w_2\,dx = 0,
    \end{cases}   
   \end{equation}
   for some \( \rho > 0 \). Moreover, we remark that $(u,v)$ solves \eqref{equivalent-lv} of the form
  $$ \left(u(y),v(y)\right)=\left(\mu_1^{-\frac{N-2}{4}} P_{\lambda} U_{\delta_1, \xi_1/\lambda}+\phi_1,\,\mu_2^{-\frac{N-2}{4}} P_{\lambda} U_{\delta_2, \xi_2/\lambda}+\phi_2 \right),\quad y\in\Omega_\lambda:=\Omega/\lambda,$$
   is equivalent to $(w_1,w_2)$ solves \eqref{lv} of the form
   \begin{equation}
      \label{formw}\left(w_1(x),w_2(x)\right)=\left(\mu_1^{-\frac{N-2}{4}} P U_{\lambda\delta_1, \xi_1}+\widehat{\phi}_1,\,\mu_2^{-\frac{N-2}{4}} P U_{\lambda\delta_2, \xi_2}+\widehat{\phi}_2 \right),\quad x\in\Omega,  
   \end{equation}
   where $\widehat{\phi}_i(x)=\lambda^{-1}{\phi}_i(x/\lambda)$ with $\|\widehat{\phi}_i\|_{H^{1}_0(\Omega)}=\|{\phi}_i\|_{H^{1}_0(\Omega_\lambda)}=O(\varepsilon\lambda)$, $i=1,2$.  

   \medskip
  Now we compute the following integral.
  { \begin{Lem}\label{newlemma}
Let $N=4$. For any \( \eta \in (0,1) \), there exists \( \rho > 0 \), such that for any \( (\bm{\delta},\bm{\xi}) \in {\mathcal{O}}_\eta \) and if $|\beta|$ satisfies \eqref{suppose-beta},  it holds that
        \begin{equation}\label{pohj-4}
           \int_{B(\xi_1,\rho)} \left(-\Delta w_1 - w_1^3 - \varepsilon w_1 -\beta w_1\,w_2\right) \partial_j w_1 
        = -l_1 \,\lambda^2  \,\partial_j \Phi_{\Omega}(\xi_1)  + o\left(\lambda^2\right), 
        \end{equation}
               
        \begin{align}\label{pohj'-4}
            \int_{B(\xi_2,\rho)} \left(-\Delta w_2 - w_2^3 - \varepsilon w_2 -\beta w_1\,w_2\right) \partial_j  w_2
            = -l_2 \,\lambda^2 \,  \partial_j \Phi_{\Omega}(\xi_2)  + o\left(\lambda^2\right),
            \end{align} where $l_i$, $i=1,2$, are positive constants.
        \end{Lem}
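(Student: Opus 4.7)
The plan is to convert the left-hand side of \eqref{pohj-4} into boundary integrals via a localized Pohozaev identity, and then exploit the asymptotics of $w_1$ near $\xi_1$. Multiplying the residual $-\Delta w_1-w_1^3-\varepsilon w_1-\beta w_1 w_2$ by $\partial_j w_1$ and integrating by parts over $B(\xi_1,\rho)$, one obtains
\[
\int_{\partial B(\xi_1,\rho)}\Bigl[\tfrac{1}{2}|\nabla w_1|^2\nu_j-\partial_\nu w_1\,\partial_j w_1-\tfrac{1}{4}w_1^4\nu_j-\tfrac{\varepsilon}{2}w_1^2\nu_j-\tfrac{\beta}{2}w_1^2 w_2\nu_j\Bigr]d\sigma+\tfrac{\beta}{2}\int_{B(\xi_1,\rho)}w_1^2\partial_j w_2\,dx.
\]

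Next, I would insert the ansatz $w_1=\mu_1^{-1/2}PU_{\lambda\delta_1,\xi_1}+\widehat{\phi}_1$ together with the standard expansion
\[
PU_{\lambda\delta_1,\xi_1}(x)=C_4\,\lambda\delta_1\Bigl(\frac{1}{|x-\xi_1|^2}-\frac{1}{B_4}H(x,\xi_1)\Bigr)+O(\lambda^3),
\]
valid uniformly for $x$ on $\partial B(\xi_1,\rho)$, to identify the leading term. In the quadratic form $\tfrac12|\nabla PU|^2\nu_j-\partial_\nu PU\,\partial_j PU$, the purely singular self-interaction (the contribution of $|x-\xi_1|^{-2}$ alone) vanishes by odd parity on the sphere, whereas the cross term between the singular part and $H(x,\xi_1)$ produces a factor $\partial_{x_j}H(x,\xi_1)\big|_{x=\xi_1}=\tfrac{1}{2}\partial_j\Phi_\Omega(\xi_1)$ after integration over $\partial B(\xi_1,\rho)$ and application of the divergence theorem to the harmonic function $H(\cdot,\xi_1)$. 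This yields the announced leading term $-l_1\lambda^2\,\partial_j\Phi_\Omega(\xi_1)$ with an explicit positive constant $l_1$ depending only on $\mu_1$, $\delta_1$ and universal constants.

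It remains to show that the other contributions are $o(\lambda^2)$. The boundary $w_1^4$ term is $O(\lambda^4)$; the $\varepsilon w_1^2$ term is $O(\varepsilon\lambda^2)=o(\lambda^2)$; the boundary coupling term is $O(|\beta|\lambda^3)$, and the bulk coupling term $\tfrac{\beta}{2}\int w_1^2\partial_j w_2\,dx$ is controlled by $|\beta|\,\lambda^3|\ln\lambda|$, both $o(\lambda^2)$ under the assumption $|\beta|=o(\lambda|\ln\lambda|)$ from \eqref{suppose-beta}. The main obstacle will be controlling the contribution of the remainder $\widehat{\phi}_1$ in the boundary integrals: although Proposition \ref{error-size} gives $\|\widehat{\phi}_1\|_{H^1_0(\Omega)}=O(\varepsilon\lambda)$, bulk $H^1$ control does not directly provide trace bounds on a fixed sphere. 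This is exactly where Lemma \ref{lemtool} enters: applied to $|\nabla\widehat{\phi}_1|^2+|\widehat{\phi}_1|^2$ in a small annulus around $\rho$, it allows us to select a good radius $\bar{\rho}$ on which $\|\nabla\widehat{\phi}_1\|_{L^2(\partial B(\xi_1,\bar{\rho}))}=O(\varepsilon\lambda)$, so that the remainder contribution to the boundary integrals is bounded by $O(\lambda)\cdot O(\varepsilon\lambda)=O(\varepsilon\lambda^2)=o(\lambda^2)$. Assembling these estimates yields \eqref{pohj-4}, and the argument for \eqref{pohj'-4} is identical after swapping the indices $1\leftrightarrow 2$.
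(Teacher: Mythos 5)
Your proposal follows essentially the same route as the paper: a localized Pohozaev identity turns the $-\Delta w_1-w_1^3$ contribution into a sphere integral, the uniform $C^1$-expansion of $PU_{\lambda\delta_1,\xi_1}$ near $\partial B(\xi_1,\rho)$ isolates the $-l_1\lambda^2\,\partial_j\Phi_\Omega(\xi_1)$ term via the cross product between the singular kernel and $H(\cdot,\xi_1)$, and Lemma~\ref{lemtool} is invoked to pick a good radius so that the trace of $\widehat{\phi}_1$ contributes only $O(\varepsilon\lambda)$-size quantities. The one genuine deviation is your handling of the Lotka--Volterra term. The paper keeps $-\beta\int_{B(\xi_1,\rho)}w_1w_2\,\partial_j w_1$ as a bulk integral, substitutes $w_i=\mu_i^{-1/2}PU_{\lambda\delta_i,\xi_i}+\widehat{\phi}_i$, and estimates the four resulting pieces $J_1,\dots,J_4$, getting a leading contribution of order $|\beta|\lambda^2|\ln\lambda|$. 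You instead integrate by parts once more, producing a boundary piece $-\tfrac{\beta}{2}\int_{\partial B}w_1^2w_2\nu_j=O(|\beta|\lambda^3)$ and a bulk piece $\tfrac{\beta}{2}\int_B w_1^2\,\partial_j w_2$, where the derivative now falls on the ``far'' profile and is of size $O(\lambda)$ inside $B(\xi_1,\rho)$, yielding $O(|\beta|\lambda^3|\ln\lambda|)$; this is a slightly cleaner bookkeeping of the same smallness, and both are $o(\lambda^2)$ under \eqref{suppose-beta} with $N=4$. Two small points to tidy up: the integration by parts introduces boundary traces of $w_1^2w_2$, so the good-radius selection via Lemma~\ref{lemtool} should be applied simultaneously to a sum including $|\widehat{\phi}_1|^4$, $|\widehat{\phi}_2|^4$ and $|\nabla\widehat{\phi}_2|^2$ (not just $|\nabla\widehat{\phi}_1|^2+|\widehat{\phi}_1|^2$); and the expansion of $PU$ on the sphere is accurate to $o(\lambda)$ (the paper writes $O(\lambda^2)$), which still closes the argument since it is paired with a factor $O(\lambda)$, but stating $O(\lambda^3)$ overstates what is available for the projected bubble.
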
}
        \begin{proof}
        We only prove \eqref{pohj-4} holds true and \eqref{pohj'-4} can be obtained analogously. Indeed, a direct computation yields that for some $\rho>0$,
        \begin{equation}\label{m1}
        \displaystyle\int_{B(\xi_1,\rho)} \left(-\Delta w_1 - w_1^3\right) \partial_j w_1\,dx
        = \displaystyle\int_{\partial B(\xi_1,\rho)} \left( -\partial_\nu w_1 \, \partial_j w_1 
        + \frac{1}{2} |\nabla w_1|^2 \nu_j - \frac{1}{4} w_1^4 \nu_j \right)\,dS_x.
        \end{equation}
 Note that $\forall\,x\in\partial B(\xi_1,\rho)$, $$U_{\lambda\delta_1, \xi_1}(x)=O(\lambda),\quad \frac{\partial U_{\lambda\delta_1, \xi_1}}{\partial x_{j}}=O(1).$$
Moreover, by Lemma \ref{lemtool} and Lemma \ref{error-size}, we choose suitable \( \rho >0\) such that
        \begin{equation}\label{m2}
        \displaystyle\int_{\partial B(\xi_1,\rho)} \left( |\nabla \widehat{\phi}_1|^2 + |\widehat{\phi}_1|^4 \right) \,dS_x= O(\varepsilon^2\lambda^2).
        \end{equation}
Furthermore, we note that
        \begin{equation}\label{m3}
        U_{\lambda\delta_1, \xi_1} := \frac{C_4\,\lambda\delta_1}{|x - \xi_1|^2} {- \widetilde C\lambda H(x, \xi_1)} + O(\lambda^2),
        \end{equation}
        uniformly in \( C^1 \) norm on \( \partial B(\xi_1,\rho) \),  where $C_4=2\sqrt2$, $\widetilde C=2C_4\omega\delta_1$, and $\omega$ denotes the measure of the unit sphere $S^3 \subset \mathbb{R}^4$. In addition, It is crucial to point out that the function \( H(x, \xi_1) \) is harmonic. Thus, by \eqref{m1}, \eqref{m2} and \eqref{m3}, we obtain that
        \begin{align*}
           &\displaystyle \int_{B(\xi_1,\rho)} \left(-\Delta w_1 - w_1^3 \right) \partial_j w_1\,dx \\
        =& \lambda^2 \int_{\partial B(\xi_1,\rho)} \,
        -\partial_\nu \left( \frac{C_4\delta_1}{|x - \xi_1|^2}- \widetilde C H(x, \xi_1) \right) 
        \partial_j \left( \frac{C_4\delta_1}{|x - \xi_1|^2} - \widetilde C H(x, \xi_1) \right) dS_x
        \\&+ \frac{1}{2} \lambda^2 \int_{\partial B(\xi_1,\rho)} \left| \nabla \left( \frac{C_4\delta_1}{|x - \xi_1|^2} - \widetilde C H(x, \xi_1) \right) \right|^2 \nu_j \,dS_x + o( \lambda^2)\\
        =&0+ C_4\delta_1\widetilde C \, \lambda^{2}\displaystyle\int_{\partial B(\xi_1,\rho)} \nabla\left( \frac{1}{|x - \xi_1|^2} \right) \cdot \nu \, \partial_{x_j} H(x,\xi_1)\,dS_x+ o( \lambda^2)\\
        =&\, -\frac{\widetilde{C}^2}{2}  \lambda^2\, \left(\frac{\partial }{\partial\xi_{1}^{j}}\Phi_\Omega(\xi_1)\right)+ o( \lambda^2),
        \end{align*}
 where the Robin's function \( \Phi_{\Omega}(x) = H(x,x) \) verifies
        \[
       \frac{\partial }{\partial\xi_{1}^{j}}\Phi_\Omega(\xi_1)= \left( \partial_{x_j} H(x,y) + \partial_{y_j} H(x,y) \right) \big|_{(x,y) = (\xi_1,\xi_1)} = 2 \left. \partial_{x_j} H(x,y) \right|_{(x,y) = (\xi_1,\xi_1)}.
        \]
      
        Thus, we conclude that
        \begin{equation}\label{mt1}
       \displaystyle \int_{B(\xi_1,\rho)} \left(-\Delta w_1- w_1^3 \right) \partial_j w_1\,dx = -\frac{\widetilde{C}^2}{2}  \lambda^2\, \left(\frac{\partial }{\partial\xi_{1}^{j}}\Phi_\Omega(\xi_1)\right)+ o( \lambda^2).
        \end{equation}

Similarly, we have
        \begin{align}\label{mt2}
        - \varepsilon\displaystyle\int_{B(\xi_1,\rho)}w_1 \,\partial_j w_1 \,dx
        &=
        - \frac{\varepsilon}{2}\displaystyle \int_{\partial B(\xi_1,\rho)}  w_1^2 \,\nu_j \,dS_x=O\left(\varepsilon\lambda^2\right)=o( \lambda^2).
        \end{align}

       Finally, we aim to show that
        \begin{align} \label{mt3}
            -\beta\displaystyle\int_{B(\xi_1,\rho)}  w_1\,w_2 \, \partial_j w_1\,dx
            =  o( \lambda^2).
            \end{align}
    Indeed, from \eqref{formw}, a direct computation yields that
        \begin{align*} 
        &\beta\displaystyle\int_{B(\xi_1,\rho)}  w_1\,w_2 \, \partial_j w_1 \,dx\\
            =&\underbrace{\beta\mu_1^{-\frac{1}{2}}\mu_2^{-\frac{1}{2}}\displaystyle \int_{B(\xi_1,\rho)}  PU_{ \lambda\delta_1,\xi_1}\,PU_{ \lambda\delta_2,\xi_2}\,\left(\partial_{x_j}PU_{ \lambda\delta_1,\xi_1}+\partial_{x_j}\widehat{\phi}_1\right)dx}_{:=J_1}\\
            &+\underbrace{\beta\mu_1^{-\frac{1}{2}}\displaystyle \int_{B(\xi_1,\rho)}  PU_{ \lambda\delta_1,\xi_1}\,\widehat{\phi}_2\,\left(\partial_{x_j}PU_{ \lambda\delta_1,\xi_1}+\partial_{x_j}\widehat{\phi}_1\right)dx}_{:=J_2}\\
            &+\underbrace{\beta\mu_2^{-\frac{1}{2}}\displaystyle \int_{B(\xi_1,\rho)}  \widehat{\phi}_1\,PU_{ \lambda\delta_2,\xi_2}\,\left(\partial_{x_j}PU_{ \lambda\delta_1,\xi_1}+\partial_{x_j}\widehat{\phi}_1\right)dx}_{:=J_3}\\
            &+\underbrace{\beta\displaystyle \int_{B(\xi_1,\rho)}  \widehat{\phi}_1\,\widehat{\phi}_2\,\left(\partial_{x_j}PU_{ \lambda\delta_1,\xi_1}+\partial_{x_j}\widehat{\phi}_1\right)dx}_{:=J_4},
            \end{align*}
where by \eqref{4-truth} and the assumptions on $\beta$ given in \eqref{suppose-beta}, we derive that
\begin{align*}
|J_1|\lesssim&|\beta|\displaystyle \int_{B(\xi_1,\rho)} U^2_{\lambda\delta_1,\xi_1}dx+ |\beta|\left(\displaystyle \int_{B(\xi_1,\rho)} U^2_{\lambda\delta_1,\xi_1}U^2_{\lambda\delta_2,\xi_2}dx\right)^{\frac{1}{2}}\|\nabla \widehat{\phi}\|_2\\\lesssim  & |\beta|\lambda^2|\ln{\lambda}|+|\beta|\lambda^2|\ln{\lambda}|^{\frac{1}{2}}\left(\varepsilon\lambda\right)=o(\lambda^2),
\end{align*}
and
            \begin{align*}
                |J_2|                \lesssim&|\beta|\left\|U_{\lambda\delta_1,\xi_1}\right\|_4\,\left\|\nabla U_{\lambda\delta_1,\xi_1}\right\|_2\,\|\widehat{\phi}_2\|_4+|\beta|\left\|U_{\lambda\delta_1,\xi_1}\right\|_4\,\|\nabla \widehat{\phi}_2\|_2\,\|\widehat{\phi}_2\|_4\\
                \lesssim&|\beta|(\varepsilon\lambda)=o(\lambda^2).
            \end{align*}
            Moreover, 
            \begin{align*}
                |J_3|\lesssim&|\beta|\displaystyle \int_{B(\xi_1,\rho)}  U_{\lambda\delta_2,\xi_2}\,|\widehat{\phi}_1|\,\left(|\partial_{x_j}PU_{\lambda\delta_1,\xi_1}|+|\partial_{x_j}\widehat{\phi}_1|\right)dx\\
                \lesssim&|\beta|\lambda\,\left\|\nabla U_{\lambda\delta_1,\xi_1}\right\|_2\,\|\widehat{\phi}\|_2+|\beta|\lambda\,\|\nabla \widehat{\phi}_1\|_2\|\widehat{\phi}_1\|_2\\
                \lesssim&|\beta|\lambda\,(\varepsilon\lambda)=o(\lambda^2),
            \end{align*}
            and 
            \begin{align*}
|J_4|\lesssim&|\beta|\|\widehat{\phi}_1\|_{H^{1}_0(\Omega)}\,\|\widehat{\phi}_2\|_{H^{1}_0(\Omega)}\lesssim|\beta|(\varepsilon^2\lambda^2)=o(\lambda^2).
            \end{align*}
Thus, from the estimates of $J_1$-$J_4$, we derive that {if $|\beta|$} satisfies \eqref{suppose-beta}, then \eqref{mt3} holds true. Furthermore, from \eqref{mt1}, \eqref{mt2} and \eqref{mt3}, we deduce \eqref{pohj-4}. Using similar arguments to $v$, we derive \eqref{pohj'-4}. This completes the proof.
            \end{proof}

\vspace{0.2cm}
 \begin{proof} [\textbf{Proof of  Theorem \ref{th1-4}}: completed]  We  choose
    \begin{align*}
        \lambda=e^{-\frac{d_i}{\varepsilon}},\quad \text{for some $d_i>0$, $i=1,2$.}
    \end{align*}
 and  the proof follows arguing as in  \cite[Proof of Theorem 1.1]{Pistoia2017}.
    \end{proof}
   
     \vspace{0.2cm}
\appendix

\section{Solving problem (\ref{first-equ})} \label{AAA}
\setcounter{equation}{0}
\renewcommand{\theequation}{A.\arabic{equation}}
First of all, we study the linear theory associated with problem  \eqref{first-equ}.

\begin{Lem}\label{invert}
    For any $\eta\in(0,1)$, there exists $\varepsilon_0>0$ such that for any $\varepsilon \in (0,\varepsilon_0)$, for any $\left(\bm{\delta}, \bm{\xi}\right)\in \mathcal{O}_\eta$, and $$|\beta|=o\left(\lambda^{\frac{N-6}{2}}\right)\to0,\quad \text{if $N\geq 6$, (no assumption is needed if $N=4,5$),}$$  it holds true that  
    \begin{align}\label{5-invert-1}
        \left\|\bm{\mathcal{L}}_{\bm{\delta},\bm{\xi}}\left(\bm{\phi}\right)\right\|_{H}\geq c\left\|\bm{\phi}\right\|_{H}, \quad \forall \bm{\phi}\in {K}^{\perp}_{1}\times{K}^{\perp}_{2},    \end{align} 
   for some constants $c>0$.
\end{Lem}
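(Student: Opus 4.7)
The plan is to argue by contradiction in the classical blow-up spirit. Assume the conclusion fails, so there exist sequences $\varepsilon_n\to 0^+$, $(\bm{\delta}_n,\bm{\xi}_n)\in\mathcal{O}_\eta$ and $\bm{\phi}_n=(\phi_{1,n},\phi_{2,n})\in K_1^\perp\times K_2^\perp$ such that $\|\bm{\phi}_n\|_H=1$ but $\|\bm{\mathcal{L}}_{\bm{\delta}_n,\bm{\xi}_n}(\bm{\phi}_n)\|_H\to 0$. Up to a subsequence, I may assume $\bm{\delta}_n\to\bm{\delta}_\star\in[\eta,\eta^{-1}]^2$ and $\bm{\xi}_n\to\bm{\xi}_\star$ with $|\xi_{1,\star}-\xi_{2,\star}|\geq 2\eta$ and $\mathrm{dist}(\xi_{i,\star},\partial\Omega)\geq 2\eta$.

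The first step is to split $\mathcal{L}_i=\mathcal{L}_i^{0}-\mathcal{R}_i$, where
\begin{equation*}
\mathcal{L}_i^{0}(\phi_i):=\phi_i-i_\lambda^{*}\bigl(p(P_\lambda U_i)^{p-1}\phi_i\bigr)
\end{equation*}
is the classical linearization at a single bubble, and $\mathcal{R}_i(\bm{\phi})$ collects the $\varepsilon\lambda^{2}$ term together with the two coupling terms. Single-bubble theory, going back to \cite{musso} in the $N\leq 6$ setting and to \cite{ade} in the supercritical setting $N\geq 7$, yields a uniform lower bound
\begin{equation*}
\|\Pi_i^\perp\mathcal{L}_i^{0}(\phi_i)\|_X\geq c_0\|\phi_i\|_X,\qquad \forall\phi_i\in K_i^\perp,
\end{equation*}
with $c_0>0$ independent of $n$. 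Its proof is a blow-up argument built on the non-degeneracy of the Talenti bubbles in $D^{1,2}(\mathbb{R}^N)$; for $N\geq 7$ one combines Lemma \ref{embed} for the $H_0^1$-part and Lemma \ref{ls} for the $L^N$-part of the norm.

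The crucial step is estimating the remainder $\mathcal{R}_i(\bm{\phi}_n)$ in $X$. For the mass term $\varepsilon_n\lambda_n^{2}\phi_{i,n}$, Lemma \ref{embed} combined with H\"older and the scaled Poincar\'e inequality on $\Omega_{\lambda_n}$ (whose first Dirichlet eigenvalue equals $\Lambda_1(\Omega)\lambda_n^{2}$ by scaling) produces a bound of order $\varepsilon_n\|\nabla\phi_{i,n}\|_2=o(1)$. For each coupling term $\beta_n\lambda_n^{(6-N)/2}P_\lambda U_{j,n}\phi_{k,n}$, I would apply Lemma \ref{embed} together with the H\"older splitting $\tfrac{N+2}{2N}=\tfrac{2}{N}+\tfrac{N-2}{2N}$:
\begin{equation*}
\bigl\|i_\lambda^{*}\bigl(\beta_n\lambda_n^{(6-N)/2}P_\lambda U_{j,n}\phi_{k,n}\bigr)\bigr\|_{H_0^1(\Omega_{\lambda_n})}\lesssim|\beta_n|\lambda_n^{(6-N)/2}\|P_\lambda U_{j,n}\|_{L^{N/2}(\Omega_{\lambda_n})}\|\phi_{k,n}\|_{L^{2^*}(\Omega_{\lambda_n})}.
\end{equation*}
A direct scaling computation gives $\|P_\lambda U_{j,n}\|_{L^{N/2}(\Omega_{\lambda_n})}=O(1)$ for $N\geq 5$ and $O(|\ln\lambda_n|^{1/2})$ for $N=4$, so by Sobolev the right-hand side is controlled by $|\beta_n|\lambda_n^{(6-N)/2}\|\phi_{k,n}\|_X$, which is $o(\|\bm{\phi}_n\|_H)$ under the hypothesis $|\beta|=o(\lambda^{(N-6)/2})$ for $N\geq 6$ (and automatically for $N=4,5$ since $\beta\to 0$). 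In the supercritical regime $N\geq 7$ the $L^N$-component of the $X$-norm is treated analogously via Lemma \ref{ls} with $s=N$, which via the H\"older split $\tfrac{3}{N}=\tfrac{2}{N}+\tfrac{1}{N}$ leads to bounding $\|P_\lambda U_{j,n}\|_{L^{N/2}}\|\phi_{k,n}\|_{L^{N}}$ and therefore produces the same scaling in $\beta$ and $\lambda$.

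Putting everything together, the inequality
\begin{equation*}
\|\Pi_i^\perp\mathcal{L}_i^{0}(\phi_{i,n})\|_X\leq\|\mathcal{L}^{i}_{\bm{\delta}_n,\bm{\xi}_n}(\bm{\phi}_n)\|_X+\|\Pi_i^\perp\mathcal{R}_i(\bm{\phi}_n)\|_X=o(1)
\end{equation*}
combined with the classical lower bound forces $\|\phi_{i,n}\|_X\to 0$ for $i=1,2$, contradicting $\|\bm{\phi}_n\|_H=1$. The main obstacle is performing the coupling-term estimate tightly enough to recover the sharp threshold $|\beta|=o(\lambda^{(N-6)/2})$ dictated by the supercritical coupling; the $N=4$ case introduces an additional logarithmic factor in $\|P_\lambda U\|_{L^{N/2}}$ but still leaves plenty of margin, because in that case only $|\beta|=o(1)$ is required.
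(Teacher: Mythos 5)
Your proof is correct and recovers the right threshold on $|\beta|$, but it takes a genuinely different route from the paper. The paper runs a full, self-contained blow-up argument: it assumes $\|\bm\phi_n\|_H=1$ with $\|\bm{\mathcal L}(\bm\phi_n)\|_H\to 0$, then shows in Step~1 that the kernel components $w_{in}$ vanish (by expanding $w_{in}$ in the basis $\{P_\lambda\psi_{in}^k\}$ and estimating the quantities $I_n,\dots,IV_n$, where the coupling enters via $III_n$), in Step~2 translates by $\xi_{in}/\lambda_n$ and uses the nondegeneracy of the Talenti bubble to get weak convergence $\widetilde\phi_{in}\rightharpoonup 0$, and in Step~3 upgrades this to strong convergence in $H_0^1$ and, for $N\ge 7$, in $L^s$ via interpolation. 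You instead factor out the classical single-bubble linearization $\mathcal L_i^0=\phi_i-i_\lambda^*(p(P_\lambda U_i)^{p-1}\phi_i)$, whose uniform $K_i^\perp$-coercivity is taken as a black box from \cite{musso} and \cite{ade}, and show that the mass term and the coupling terms constitute an $o(1)$ perturbation in the operator norm on $X$. Your H\"older splits are the right ones: $\frac{N+2}{2N}=\frac{2}{N}+\frac{N-2}{2N}$ for the $H_0^1$-part via Lemma~\ref{embed}, and $\frac{3}{N}=\frac{2}{N}+\frac{1}{N}$ for the $L^N$-part via Lemma~\ref{ls} with $s=N$, and the size $\|P_\lambda U_{j,n}\|_{L^{N/2}(\Omega_{\lambda_n})}=O(1)$ for $N\ge 5$ (resp.\ $O(|\ln\lambda_n|^{1/2})$ for $N=4$) is computed correctly, so you get $|\beta_n|\lambda_n^{(6-N)/2}\to 0$ as the operative condition for $N\ge 5$, automatic for $N=4,5$. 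The trade-off is modularity versus self-containedness: your version is shorter and cleanly separates the single-bubble theory from the coupling-specific estimates, whereas the paper's version does not rely on the earlier coercivity lemmas but instead reproves them inline, with the $\beta$-terms absorbed into each step of the contradiction argument. Both give the same conclusion and the same sharp smallness regime on $\beta$.
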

\begin{proof}  

Suppose by contradiction that, for any $\eta\in(0,1)$ with $\left(\bm{{\bar{\delta}}},\bm{\xi} \right)\in \mathcal{O}_\eta$ and $n$ large enough, there exist:
   \begin{itemize}
    \item sequences $\left(\bm{{\delta}}_n,\bm{\xi}_n \right)\in \mathcal{O}_\eta$, such that $\xi_{{in}} \rightarrow \xi_i  $ \text{and} $\delta_{{in}}\to\bar{\delta}_i$, $i=1,2$.
    \item sequences $\varepsilon_{{n}} \to 0$ and  $\lambda_n=\lambda(\varepsilon_n)\to 0$.
    \item sequences $\beta_n$ such that $|\beta_n|\lambda_n^{\frac{6-N}{2}}\longrightarrow 0$ if $N\geq 5$, and $|\beta_n|\lambda_n|\ln{\lambda_n}|^{\frac{1}{2}}\longrightarrow 0$ if $N=4$.
    \item  sequences $\phi_{{in}} \in K_i^{\perp}$, $i=1,2$, such that $\left\|\bm{\phi_n}\right\|_{H}=\left\|{\phi_{1n}}\right\|_{X}+\left\|{\phi_{2n}}\right\|_{X}=1$ and \end{itemize} 
    \begin{align*}
\left\|\bm{\mathcal{L}}_{\bm{\delta_n},\bm{\xi_n}}\left(\bm{\phi_n}\right)\right\|_{H}&=\left\|\left(\mathcal{L}^{1}_{\bm{\delta_n},\bm{\xi_n}}\left(\bm{\phi_n}\right),\mathcal{L}^{2}_{\bm{\delta_n},\bm{\xi_n}} \left(\bm{\phi_n}\right)\right)\right\|_{H}\\&:=\left\|\left({h}_{1n},{h}_{2n}\right)\right\|_{{H}} \rightarrow 0,\quad\text{as $n\rightarrow +\infty$},
    \end{align*}
From \eqref{linear-op}, we find that for $i=1,2$, 
\begin{align}\label{omega-in}
   \phi_{in}=&\Pi^{\perp}_i\circ i_{\lambda_n}^{*}\Big(p\left(P_{\lambda_n} U_{in}\right)^{p-1}\,\phi_{in}+\varepsilon_n\,\lambda_n^{2}\,\phi_{in}+\beta_n\,\lambda_n^{\frac{6-N}{2}} \,\mu_i^{-\frac{N-2}{4}}\,P_{{\lambda_n}}U_{in}\sum_{j\neq i}\phi_{jn}\nonumber\\&\quad\,\quad\,+\beta_n\,\lambda_n^{\frac{6-N}{2}}\phi_{in} \sum_{j\neq i}\mu_j^{-\frac{N-2}{4}}P_{{\lambda_n}}U_{jn}\Big) + h_{in}+ w_{in},
\end{align}
where $U_{{in}}:=U_{\delta_{{in}}, \xi_{{in}}/\lambda_n}$, $w_{{in}} \in K_i$ and $
h_{in}\in {K}^{\perp}_{i}$ with $\left\|{h}_{in}\right\|_{{X}} \rightarrow 0$ {as $n\rightarrow +\infty$}.

\medskip
\textbf{Step 1.}~ We prove that $w_{{in}} \rightarrow 0$ strongly in $X$ as $n \rightarrow+\infty$.

Since $w_{{in}}\in K_i$, then $w_{{in}}=\sum\limits_{k=0}^{N} c_{in}^k\,P_{{\lambda_n}}\psi_{{in}}^k$, for some coefficients $\{ c_{in}^k \}$ and $\psi_{{in}}^k:=\psi_{\delta_{{in}},\, \xi_{{in}}/\lambda_n}^k$. Thus,  we obtain that
\begin{align}\label{w-in-1}
\|w_{in}\|^2_{X} =\sum_{k,j=0}^{N} c_{in}^{k}c_{in}^{j}\, \left\langle P_{\lambda_n} \psi_{{in}}^j, P_{\lambda_n} \psi_{{in}}^k\right\rangle _{X}= \sum_{k=0}^{N} \left(c_{in}^{k}\right)^2\, \sigma_{kk} + \sum_{\substack{j,k=0 \\ j\ne k}}^{N} c_{in}^{k}c_{in}^{j}\,  o_n(1),
\end{align}
 where
\begin{align*}
\displaystyle \sigma_{00}
&=p\, C_N^{2^*}\, \frac{(N-2)^2}{4}\,\displaystyle\int_{\R^N}\frac{ (| y|^2 -1)^2}
{(1 + | y|^2)^{N+2}}\,dy>0,\\
\sigma_{jj}  &=p\,C_N^{2^*}\, (N - 2)^2 \,\displaystyle\int_{\R^N}\frac{y_j^2}{(1 + | y|^2)^{N+2}}\,dy>0, 
\quad j = 1, \dots, N. 
\end{align*}
On the other hand, from \eqref{omega-in}, we have
\begin{align}\label{w-in-2}
         &\left\| w_{{in}} \right\|_{X}^2\nonumber\\= &- \underbrace{p\,\displaystyle\int_{\Omega_{\lambda_n}} \left(P_{{\lambda_n}}U_{in}\right)^{p-1} \,\phi_{in} \left(\sum\limits_{k=0}^{N} c_{in}^k\left(P_{{\lambda_n}}\psi_{{in}}^k-\psi_{{in}}^k\right)\right) \, dx}_{:={I}_n}\nonumber\\&-\underbrace{p\,\displaystyle\int_{\Omega_{\lambda_n}} \left[\left(P_{\lambda_n}U_{in}\right)^{p-1} -U_{in}^{p-1}\right]\,\phi_{in} \left(\sum\limits_{k=0}^{N} c_{in}^k\,\psi_{{in}}^k\right) \, dx}_{:=II_n}\nonumber\\
& \underbrace{-\beta_n \lambda_n^{\frac{6-N}{2}}\int_{\Omega_{\lambda_n}} \left(\mu_i^{-\frac{N-2}{4}}P_{\lambda_n}U_{in}\sum_{j\neq i}\phi_{jn} +\phi_{in}\sum_{j\neq i}\mu_j^{-\frac{N-2}{4}}P_{\lambda_n}U_{jn}\right)\left(\sum\limits_{k=0}^{N} c_{in}^k\,P_{{\lambda_n}}\psi_{{in}}^k\right) dx}_{:={III}_{{n}}}\nonumber\\&\underbrace{-\varepsilon_n\,\lambda_n^{2}\displaystyle\int_{\Omega_{\lambda_n}} \phi_{{in}}\,\left(\sum\limits_{k=0}^{N} c_{in}^k\,P_{{\lambda_n}}\psi_{{in}}^k\right)  dx}_{:=IV_n}+\left\langle\phi_{{in}}, w_{{in}}\right\rangle _{X}-\left\langle h_{{in}}, w_{{in}}\right\rangle _{X} .   \end{align}
Obviously, $\left\langle h_{{in}}, w_{{in}}\right\rangle _{X}=o_{n}(1)$ since $ h_{{in}}\to 0$ strongly in $X$. Moreover, since $\phi_{{in}}\in K^{\perp}_i$ and $w_{{in}}\in K_i$, then
$
\left\langle\phi_{{in}}, w_{{in}}\right\rangle _{X} =0
$. Furthermore, from Lemma \ref{1}, Lemma \ref{esimates} and Lemma \ref{7}, we have
\begin{align*}
    |I_n|+|II_n|\lesssim o_n(1) \sum_{k=0}^N\,\left|c_{{in}}^k\right|,\quad \left|IV_n\right|\lesssim o_{n}(1)\sum_{k=0}^N \left|c_{{in}}^k\right|.
\end{align*}
In addition, for $N\geq 4$,
\begin{align*}
|III_n|
\lesssim
    |\beta_n|\lambda_n^{\frac{6-N}{2}}\sum_{k=0}^N \left|c_{{in}}^k\right|.
\end{align*} 
Hence, combining \eqref{w-in-2} with the above estimates, we conclude that if {$|\beta_n|=o\left(\lambda_n^{{(N-6)/2}}\right)\to 0$ as $n\to+\infty$ when $N\geq 6$ (no assumption is needed if $N=4,5$)}, then
\begin{align}
    \left\| w_{\mathrm{in}} \right\|_{H_0^1(\Omega_{\lambda_n})}^2
\lesssim\,o_n(1)\left\|w_{{in}}\right\|_{H_{0}^{1}(\Omega_{\lambda_n})}+o_n(1)\left(\sum_{k=0}^{N} \left|c_{{in}}^k\right|\right),\label{w-in-3}
    \end{align}
then from \eqref{w-in-1}, we obtain that  
  \begin{align*}
        \left\| w_{{in}} \right\|_{H_0^1(\Omega_{\lambda_n})}^2=&\sum_{k=0}^N\left(c_{{in}}^k\right)^2 \sigma_{kk}+{o}_n(1)\sum_{\substack{j, k=0 \\ j \neq k}}^4 \left|c_{{in}}^j c_{i n}^k\right| =o_n(1)<+\infty,
    \end{align*}
which implies that {the sequences $\{c_{{in}}^k\}$ are bounded in $n$ and small enough}, and $w_{{in}}\rightarrow 0$ strongly in $X$ as $n\rightarrow+\infty$.

Now we consider the open set $\widetilde{\Omega}_{\lambda_{in}}:=\Omega_{\lambda_n}-\xi_{in} / \lambda_n$ and the function
$$
\widetilde{\phi}_{in}(y):=\phi_n\left(y+\xi_{in} / \lambda_n\right), \quad y \in \widetilde{\Omega}_{\lambda_{in}},\quad i=1,2.
$$
Obviously, $\left\|\widetilde{\phi}_{in}\right\|_{\widehat{X}}=\left\|\phi_{in}\right\|_{X} \leq  \left\|\bm{\phi_n}\right\|_{H}=1$, where 
\begin{align*}
    \widehat{X}:=\begin{cases}
        \mathcal{D}^{1,2}\left(\mathbb{R}^N\right) \cap L^s\left(\mathbb{R}^N\right),\quad&\text{for $N\geq 7$,}\\\mathcal{D}^{1,2}\left(\mathbb{R}^N\right),\quad&\text{for $N=4,5,6$} .
    \end{cases}
\end{align*}Then, up to a subsequence, we can assume that there exists $\widetilde{\phi}_i \in \widehat{X}$ such that
$$
\widetilde{\phi}_{in} \rightharpoonup \widetilde{\phi}_i,\quad \text { weakly in $\widehat{X}$},
$$

\medskip

\textbf{Step 2.}~ We prove that \( \widetilde{\phi}_{in}\rightharpoonup 0 \) weakly in \( \widehat{X}\).

We only prove the case $i=1$. Indeed, from \eqref{omega-in},  a direct computation yields that
\begin{align}\label{translation}
    \widetilde{\phi}_{1n} = &\Pi^{\perp}_1\circ i_{\widetilde{\lambda}_n}^{*}\Big(p\left(P_{\widetilde{\Omega}_{\lambda_{1n}}} U_{\delta_{1n},0}\right)^{p-1}\,\widetilde\phi_{1n}+\varepsilon_n\,\lambda_n^{2}\,\widetilde\phi_{1n}\nonumber\\&\quad\,\quad\,\quad\,\quad+\beta_n\lambda_n^{\frac{6-N}{2}} \,\mu_1^{-\frac{N-2}{4}}\,P_{\widetilde{\Omega}_{\lambda_{1n}}}U_{\delta_{1n},0}\,\phi_{2n}\left(y+\xi_{1n} / \lambda_n\right)\nonumber\\&\quad\,\quad\,\quad\,\quad+\beta_n\lambda_n^{\frac{6-N}{2}}\widetilde{\phi}_{1n}(y)\,\mu_2^{-\frac{N-2}{4}}P_{{\lambda_n}}U_{\delta_{2n},\xi_{2n}/\lambda_n}\left(y+\xi_{1n} / \lambda_n\right)\Big)\nonumber\\&+ \widetilde{h}_{1n} + \widetilde{w}_{1n},
\end{align}
where
\begin{equation*}
\widetilde{w}_{1n}(y) :=w_{in}\left(y+\xi_{1n} / \lambda_n\right), \quad 
\widetilde{h}_{1n}(y) := h_{1n}\left(y+\xi_{1n} / \lambda_n\right),\quad y\in \widetilde{\Omega}_{\lambda_{1n}}.
\end{equation*}
Take $\Phi \in C_c^\infty(\mathbb{R}^N)$ and $n$ large enough, such that 
$K_\Phi:= \operatorname{supp} \Phi \subseteq \widetilde{\Omega}_{\lambda_{1n}}$. Then we have
\begin{equation*}
\begin{aligned}
  &\displaystyle\int_{\widetilde{\Omega}_{\lambda_{1n}}} \nabla \widetilde{\phi}_{1n}(y)\, \nabla \Phi(y)  dy\\= & \underbrace{p  \displaystyle\int_{\widetilde{\Omega}_{\lambda_{1n}}}\left(P_{\widetilde{\Omega}_{\lambda_{1n}}} U_{\delta_{1n},0}\right)^{p-1}\,\widetilde\phi_{1n} \,\Phi(y) dy}_{:=A_n}\\&
+\varepsilon_n\,\lambda_n^{2}\displaystyle\int_{\widetilde{\Omega}_{\lambda_{1n}}} \widetilde\phi_{1n}\,\Phi(y) dy+\displaystyle \int_{\widetilde{\Omega}_{\lambda_{1n}}} \nabla\left(\widetilde{h}_{1n} + \widetilde{w}_{1n}\right) \nabla \Phi(y) dy \\&
+\underbrace{\beta_n\lambda_n^{\frac{6-N}{2}} \,\mu_1^{-\frac{N-2}{4}}\displaystyle\int_{\widetilde{\Omega}_{\lambda_{1n}}} P_{\widetilde{\Omega}_{\lambda_{1n}}}U_{\delta_{1n},0}(y)\,\phi_{2n}\left(y+\xi_{1n} / \lambda_n\right) \,\Phi(y) dy}_{:=B_n}\\&
 + \underbrace{\beta_n\lambda_n^{\frac{6-N}{2}}\,\mu_2^{-\frac{N-2}{4}}\displaystyle\int_{\widetilde{\Omega}_{\lambda_{1n}}} P_{{\lambda_n}}U_{\delta_{2n},\xi_{2n}/\lambda_n}\left(y+\xi_{1n} / \lambda_n\right)\,\widetilde{\phi}_{1n}\left(y\right) \,\Phi(y) dy}_{:=C_n}.
\end{aligned}
\end{equation*}
It is easy to get that
\[
\displaystyle\int_{\widetilde{\Omega}_{\lambda_{1n}}} \nabla(\widetilde{h}_{in} + \widetilde{w}_{in}) \,\nabla \Phi \to 0,
\]
\begin{equation*}
\begin{aligned}
A_n& =p\displaystyle\int_{\widetilde{\Omega}_{\lambda_{1n}}}
U_{\delta_{1n},0}^{p-1}(y)\,\widetilde\phi_{1n}(y) \,\Phi(y)\,dy + o_n(1)\\&
\to p\,\displaystyle\int_{\mathbb{R}^N} U_{\bar{\delta}_{1},0}^{p-1}(y)\,\widetilde{\phi}_1(y)\,\Phi(y)\,dy,\quad\text{for some fixed $\bar{\delta}_{1}$}. 
\end{aligned}
\end{equation*}
Moreover,  when $N\geq 4$, if ${\left|\beta_n\right|\,\lambda_n^{\frac{6-N}{2}} \to 0}$ as $n\to+\infty$, then
\begin{equation*}
    \begin{aligned}
   \left|B_n\right|+\left|C_n\right| 
    &\lesssim \left|\beta_n\right|\,\lambda_n^{\frac{6-N}{2}} \, \|\Phi\|_{L^{\frac{N}{2}}(\R^N)}\,\|\phi_{jn}\|_{L^{{2^*}}(\R^N)}\,\|U_{\delta_{1n},0}\|_{L^{2^*}(\R^N)}\rightarrow0.
    \end{aligned}
    \end{equation*} 
 Thus,  we find that $ \widetilde{\phi}_1\in \mathcal{D}^{1,2}(\mathbb{R}^N)$ solves
\begin{equation}
      \label{tilder-phi}-\Delta \widetilde{\phi}_1 =p\,  U_{\bar{\delta}_{1},0}^{p-1} \,\widetilde{\phi}_1 \quad \text{in } \mathbb{R}^N.
\end{equation}

Next, we check that \( \widetilde{\phi}_1 \equiv 0 \). For that, it is enough to show that $$ \widetilde{\phi}_1 \in \left(\ker\left(-\Delta - p\,  U_{\bar{\delta}_{1},0}^{p-1}  \right)\right)^\perp ,$$ that is,
\[
\displaystyle\int_{\mathbb{R}^N} \nabla \widetilde{\phi}_1 \,\nabla \psi_1^{j} = 0, \quad \forall j = 0,1, \ldots, N,
\]
where $$\psi_1^{0}=\frac{\partial U_{\bar{\delta}_{1},0}}{\partial\bar{\delta}_{1}},\quad \psi_1^{j}=\frac{\partial U_{\bar{\delta}_{1},\bar{\xi}_1}}{\partial\bar{\xi}_{1}^{j}}\Bigg|_{\bar{\xi}_1=0},\quad \forall j = 1, \ldots, N. $$
Indeed, for \( j = 0 \), since ${\phi}_{1n}\in K_{1}^\perp$ given in \eqref{space}, we have
\begin{equation*}
\begin{aligned}
0 = p\,\displaystyle\int_{{\Omega}_{\lambda_{n}}}U^{{p-1}}_{\delta_{1n},\xi_{1n}/\lambda_n}\,\psi_{1n}^0\,\phi_{1n}=p\,\displaystyle\int_{\widetilde{{\Omega}}_{\lambda_{1n}}} \,U^{{p-1}}_{\delta_{1n},0}\,\frac{\partial U_{\delta_{1n},0}}{\partial \delta_{1n}}\,\widetilde{\phi}_{1n}.
\end{aligned}
\end{equation*}
Moreover, as \( \widetilde{\phi}_{1n}\rightharpoonup \widetilde{\phi}_1 \) in \( L^\frac{2N}{N-2}(\mathbb{R}^N) \) and \( U^{{p-1}}_{\delta_{1n},0}\,\frac{\partial U_{\delta_{1n},0}}{\partial \delta_{1n}}\in L^{\frac{2N}{N+2}}(\mathbb{R}^N) \), then passing to the limit, we obtain that
\[
p\,\displaystyle\int_{\mathbb{R}^N}\,U_{\bar{\delta}_{1},0}^{p-1}\, \frac{\partial U_{\bar{\delta}_{1},0}}{\partial\bar{\delta}_{1}}\,\widetilde{\phi}_{i}=0.
\]
The proof for \( j = 1, \dots, N\) is analogous. Thus, combining with  \eqref{tilder-phi}, we conclude that $\widetilde{\phi}_1 \equiv 0 $, which  implies that \( \widetilde{\phi}_{1n}\rightharpoonup 0 \) weakly in \( \mathcal{D}^{1,2}(\mathbb{R}^N) \). Furthermore, since the uniqueness of the limit function $\widetilde{\phi}_1$, we derive that   \( \widetilde{\phi}_{1n}\rightharpoonup 0 \) weakly in \( L^{s}(\mathbb{R}^N) \) if $N\geq 7$.
\medskip

\textbf{Step 3.}~We prove that $\phi_{in} \to 0$ strongly in $X$, for $i = 1,2$. In fact, from \eqref{translation}, we get that if ${|\beta_n|\,\lambda_n^{\frac{6-N}{2}} \to 0}$ as $n \to+\infty$, then
\begin{align*}
  &\displaystyle\int_{\widetilde{\Omega}_{\lambda_{1n}}} \left|\nabla \widetilde{\phi}_{1n}\right|^2 \\= & {p  \displaystyle\int_{\widetilde{\Omega}_{\lambda_{1n}}}\left(P_{\widetilde{\Omega}_{\lambda_{1n}}} U_{\delta_{1n},0}\right)^{p-1}\,\left|\widetilde\phi_{1n}\right|^2 }\\&
+\varepsilon_n\,\lambda_n^{2}\displaystyle\int_{\widetilde{\Omega}_{\lambda_{1n}}} \left|\widetilde\phi_{1n}\right|^2+\displaystyle \int_{\widetilde{\Omega}_{\lambda_{1n}}} \nabla\left(\widetilde{h}_{1n} + \widetilde{w}_{1n}\right) \nabla \widetilde\phi_{1n}(y)  \\&
+\beta_n\lambda_n^{\frac{6-N}{2}} \,\mu_1^{-\frac{N-2}{4}}\displaystyle\int_{\widetilde{\Omega}_{\lambda_{1n}}} P_{\widetilde{\Omega}_{\lambda_{1n}}}U_{\delta_{1n},0}(y)\,\phi_{2n}\left(y+\xi_{1n} / \lambda_n\right) \,\widetilde\phi_{1n} \\&
 + {\beta_n\lambda_n^{\frac{6-N}{2}} \,\mu_2^{-\frac{N-2}{4}}\displaystyle\int_{\widetilde{\Omega}_{\lambda_{1n}}} P_{{\lambda_n}}U_{\delta_{2n},\xi_{2n}/\lambda_n}\left(y+\xi_{1n} / \lambda_n\right)\,\left|\widetilde\phi_{1n}\right|^2 }\\\lesssim&\begin{cases}
\varepsilon_n\,\lambda_n^{2}+\left|\beta_n\right|\lambda_n^{\frac{6-N}{2}}+o_{n}(1),\quad&\text{$N\geq 5$,}\\
\varepsilon_n\,\lambda_n^{2}+\left|\beta_n\right|\lambda_n^{{2}}|\ln{\lambda_n}|^{\frac{1}{2}}+o_{n}(1),\quad&\text{$N=4$,}\\
 \end{cases}\\=&o_{n}(1),
\end{align*}
similarly, 
\begin{align*}
    \displaystyle\int_{\widetilde{\Omega}_{\lambda_{2n}}} \left|\nabla \widetilde{\phi}_{2n}\right|^2=o_{n}(1).
\end{align*}

Thus, we conclude that if {$|\beta_n|=o\left(\lambda_n^{{(N-6)/2}}\right)\to 0$ as $n\to+\infty$ when $N\geq 6$ (no assumption is needed if $N=4,5$)}, then
\begin{align}
    \label{li-0}\lim _{n \rightarrow \infty} \displaystyle\int_{{\Omega}_{\lambda_{n}}}\left|\nabla {\phi}_{in}\right|^2=0,
\end{align}
which implies that ${\phi}_{in}\to 0$ strongly in $H_{0}^{1}({\Omega}_{\lambda_{n}})$, $i=1,2$.
In particular, when $N\geq 7$, we need to prove that ${\phi}_{in}\to 0$ strongly in $L^{s}({\Omega}_{\lambda_{n}})$. Indeed, from Lemma \ref{ls} and \eqref{omega-in}, we derive that
\begin{align*}
    \left\|{\phi}_{1n} \right\|_{s}\lesssim& \| U_{1n}^{p-1}\,\phi_{1n}\|_{{\frac{N s}{N+2 s}}}+\varepsilon_n\,\lambda_n^{2}\,\|\phi_{1n}\|_{{\frac{N s}{N+2 s}}}+\|{h}_{1n} \|_{{\frac{N s}{N+2 s}}}+\|{w}_{1n}\|_{{\frac{N s}{N+2 s}}}\nonumber\\&+\left|\beta_n\right|\,\lambda_n^{\frac{6-N}{2}} \|U_{{1n}}\,\phi_{2n}\|_{{\frac{N s}{N+2 s}}}+\left|\beta_n\right|\,\lambda_n^{\frac{6-N}{2}} \|U_{{2n}}\,{\phi}_{1n}\|_{{\frac{N s}{N+2 s}}}.
\end{align*}
Since $\frac{N s}{N+2 s}\in(2,s)$, then by the interpolation formula and \eqref{li-0}, it is easy to check that
\begin{align*}
   \|\phi_{1n}\|_{{\frac{N s}{N+2 s}}}\lesssim \|\phi_{1n}\|^{\theta}_{s}\,\|\phi_{1n}\|^{1-\theta}_{2}\lesssim \|\nabla\phi_{1n}\|^{1-\theta}_{2}=o_{n}(1),
\end{align*}
where $ \theta\in(0,1)$ with $\frac{\theta}{s}+\frac{1-\theta}{2}=\frac{N+2 s}{N s}$. Hence,  we derive that ${\phi}_{1n}\to 0$ strongly in $L^{s}({\Omega}_{\lambda_{n}})$ provided $|\beta_n|=o\left(\lambda_n^{{(N-6)/2}}\right)\to 0$ as $n\to+\infty$ when $N\geq 6$, (no assumption is needed if $N=4,5$). Analogously, for $i=2$ we obtain similar estimates. Thus, a contradiction arises since $\|\bm\phi_{n}\|_{H}=1$.

\medskip
\textbf{Step 4. (Invertibility)}~The only thing left to prove, at this point, is that $$\bm{\mathcal{L}}_{\bm{\delta},\bm{\xi}}=\left(\mathcal{L}^{1}_{\bm{\delta},\bm{\xi}},\mathcal{L}^{2}_{\bm{\delta},\bm{\xi}}\right):=\left(\Pi^{\perp}_1\mathcal{L}_1,\Pi^{\perp}_2\mathcal{L}_2\right) $$ is an invertible operator. Indeed, since $i_{\lambda}^*:L^{\frac{2N}{N+2}}(\Omega_\lambda)\to H_0^1(\Omega_\lambda)$ is a compact operator, then $$\bm{\mathcal{L}}_{\bm{\delta},\bm{\xi}}=\bm{Id}-\bm{\mathcal K},\quad\text{$\bm{\mathcal K}$ is a compact operator.}$$ From \textbf{Step 1}-\textbf{Step 3}, we have already shown that  $\bm{\mathcal{L}}_{\bm{\delta},\bm{\xi}}$ is injective. Then by Fredholm's alternative theorem, it is also surjective. Therefore, $\bm{\mathcal{L}}_{\bm{\delta},\bm{\xi}}$ is invertible.  Moreover, by \eqref{5-invert-1}, it follows that $\bm{\mathcal{L}}^{-1}_{\bm{\delta},\bm{\xi}}$ is continuous.

\end{proof}

Next,  we can solve problem \eqref{first-equ}.

 \begin{proof}[Proof of Proposition \ref{error-size}]
     From Lemma \ref{invert}, we derive that $\bm{\mathcal{L}}_{\bm{\delta},\bm{\xi}}$ is invertible in $K_{\bm{\delta},\bm{\xi}}^{\perp}$, then \eqref{new-linear} is equivalent to
\begin{equation}
    \label{equiv-phi}
\bm{\phi}=\left(\bm{\mathcal{L}}_{\bm{\delta},\bm{\xi}}\right)^{-1}\Big(\bm{\mathcal{E}}_{\bm{\delta},\bm{\xi}}+\bm{\mathcal{N}}_{\bm{\delta},\bm{\xi}}\left(\bm{\phi}\right)\Big):= \mathcal{T}(\bm{\phi}),
\end{equation}
where the operator $\mathcal{T}:K_{\bm{\delta},\bm{\xi}}^{\perp} \to K_{\bm{\delta},\bm{\xi}}^{\perp}$, $K_{\bm{\delta},\bm{\xi}}^{\perp}:= K_{1}^{\perp}\times K_{2}^{\perp} $. Remark that $\bm{\phi}$ solves equation \eqref{equiv-phi} if and only if $\bm{\phi}$ is a fixed point of the operator $\mathcal{T}$. Thus, we aim to prove that $\mathcal{T}$ is a contraction map.

Firstly, from \eqref{error} and \eqref{perturbed}, we utilize Lemma \ref{embed} and Lemma \ref{ls} to yield that 
\begin{align}
\label{contra}\|\mathcal{T}(\bm{\phi})\|_{H} \leq& c_1 \|\mathcal{E}_{\bm{\delta},\bm{\xi}} + \mathcal{N}_{\bm{\delta},\bm{\xi}}(\bm{\phi})\|_{H}\nonumber\\
\leq& c_2\sum_{i=1}^2 \left( \|\widetilde{\mathcal{E}}^{i}_{\bm{\delta},\bm{\xi}}\|_{{\frac{2N}{N+2}}} +\|\widetilde{\mathcal{E}}^{i}_{\bm{\delta},\bm{\xi}}\|_{{\frac{Ns}{N+2s}}} + \| \widetilde{\mathcal{N}}^{i}_{\bm{\delta},\bm{\xi}}({\phi_1,\phi_2})\|_{{\frac{2N}{N+2}}} + \| \widetilde{\mathcal{N}}^{i}_{\bm{\delta},\bm{\xi}}({\phi_1,\phi_2})\|_{{\frac{Ns}{N+2s}}} \right),
\end{align}
where for $i=1,2$,
\begin{align*}
\widetilde{\mathcal{E}}^{i}_{\bm{\delta},\bm{\xi}}=& \mu_i^{-\frac{N-2}{4}}\left(\left(P_\lambda U_{\delta_i,\xi_i/\lambda}\right)^{p}-U_{\delta_i,\xi_i/\lambda}^{p}\right)+\varepsilon\,\lambda^{2}\mu_i^{-\frac{N-2}{4}}\,P_{\lambda}U_{\delta_i,\xi_i/\lambda}\\&+\beta\lambda^{\frac{6-N}{2}}\mu_i^{-\frac{N-2}{4}}P_{\lambda}U_{\delta_i,\xi_i/\lambda }\sum_{j\neq i}\mu_j^{-\frac{N-2}{4}}\,P_{\lambda}U_{\delta_j,\xi_j/\lambda},
\end{align*}
and 
\begin{align*}
    \widetilde{\mathcal{N}}^{i}_{\bm{\delta},\bm{\xi}}({\phi_1,\phi_2})=& \mu_i\left[\left(\mu_i^{-\frac{N-2}{4}}P_\lambda U_{\delta_i,\xi_i/\lambda}+\phi_i\right)^{p}-\left(\mu_i^{-\frac{N-2}{4}}P_\lambda U_{\delta_i,\xi_i/\lambda}\right)^{p}-p\,\mu_i^{-1}\left(P_\lambda U_{\delta_i,\xi_i/\lambda}\right)^{p-1}\,\phi_i\right]\\&\quad\,\quad+\beta\lambda^{\frac{6-N}{2}}\phi_i\sum_{j\neq i}\phi_j.
\end{align*}
Now we compute each term in the right-hand side of \eqref{contra}. For $\| \widetilde{\mathcal{N}}^{1}_{\bm{\delta},\bm{\xi}}({\phi_1,\phi_2})\|_{{\frac{2N}{N+2}}}$, by Lemma \ref{8} and Lemma \ref{GEQ-7}, we derive that for $N\geq4$,
\begin{align}\label{norm-N}
    \left\| \widetilde{\mathcal{N}}^{1}_{\bm{\delta},\bm{\xi}}({\phi_1,\phi_2})\right\|_{{\frac{2N}{N+2}}}\lesssim&\left\|U^{p-2}_{\delta_1,\xi_1/\lambda}\,\phi^2_1\right\|_{\frac{2N}{N+2}}+{\left\|\phi^{2^*-1}_1\right\|_{\frac{2N}{N+2}}}+|\beta|\lambda^{\frac{6-N}{2}}\left\|\phi_1\,\phi_2\right\|_{\frac{2N}{N+2}}\nonumber\\
    \lesssim&\left\| {\phi_1}\right\|^2_{X},\quad\text{if $|\beta|\lambda^{\frac{6-N}{2}}\to 0$ as $\lambda\to 0$.}
\end{align}
As for $\left\|\widetilde{\mathcal{E}}^{1}_{\bm{\delta},\bm{\xi}}\right\|_{{\frac{2N}{N+2}}}$,
\begin{align*}
\left\|\widetilde{\mathcal{E}}^{1}_{\bm{\delta},\bm{\xi}}\right\|_{{\frac{2N}{N+2}}}\lesssim&\underbrace{\left\|\left(P_\lambda U_{\delta_1,\xi_1/\lambda}\right)^{p}-U_{\delta_1,\xi_1/\lambda}^{p}\right\|_{\frac{2N}{N+2}}}_{:=G_1}+\underbrace{\varepsilon\,\lambda^{2}\left\|U_{\delta_1,\xi_1/\lambda}\right\|_{\frac{2N}{N+2}}}_{:=G_2}\\
&+\underbrace{|\beta|\lambda^{\frac{6-N}{2}}\left\|U_{\delta_1,\xi_1/\lambda}\,U_{\delta_2,\xi_2/\lambda}\right\|_{\frac{2N}{N+2}}}_{:=G_3}.
\end{align*}
Then from Lemma \ref{2}, we derive that 
\begin{align*}
    G_1\lesssim\begin{cases}
    \lambda^{2},\quad &\text{for $N=4$,}\\
        \lambda^{3},\quad &\text{for $N=5$,}\\
        \lambda^{4}\left|\ln{\lambda}\right|,\quad &\text{for $N=6$,}\\
        \lambda^{\frac{N+2}{2}},\quad &\text{for $N\geq 7$.}
    \end{cases}
\end{align*}
From Lemma \ref{esti-bubble}, we derive that
\begin{align*}
    G_2\lesssim\begin{cases}
    \varepsilon\,\lambda,\quad &\text{for $N=4$,}\\
        \varepsilon\,\lambda^{\frac{3}{2}},\quad &\text{for $N=5$,}\\
         \varepsilon\lambda^{2}\left|\ln{\lambda}\right|,\quad &\text{for $N=6$,}\\
        \varepsilon\lambda^{2},\quad &\text{for $N\geq 7$.}
    \end{cases}
\end{align*}
 Set $\lambda x=z$, $\forall x\in \Omega_{\lambda}$. Then,
\begin{align*}
&\left\|U_{\delta_1,\xi_1/\lambda}(x)\,U_{\delta_2,\xi_2/\lambda}(x)\right\|_{\frac{2N}{N+2}}\\=&\lambda^{\frac{N-6}{2}}\left(\displaystyle\int_{B_{\eta}(\xi_1)\bigcup\,B_{\eta}(\xi_2)\bigcup\,\Omega\backslash\left(B_{\eta}(\xi_1)\cup B_{\eta}(\xi_2)\right)}U^{\frac{2N}{N+2}}_{\lambda\delta_1,\xi_1}(z)\,U^{\frac{2N}{N+2}}_{\lambda\delta_2,\xi_2}(z)dz\right)^{\frac{N+2}{2N}}\\
\lesssim&\begin{cases}
\lambda,\quad &\text{for $N=4$,}\\
    \lambda^{\frac{5}{2}},\quad &\text{for $N=5$,}\\
\lambda^{4}\left|\ln{\lambda}\right|,\quad &\text{for $N=6$,}\\
  \lambda^{N-2}   ,\quad &\text{for $N\geq 7$,}\\
\end{cases}
\end{align*}
which implies that
\begin{align*}
    G_3\lesssim\begin{cases}
    |\beta|\lambda^{{2}},\quad &\text{for $N=4$,}\\
        |\beta|\lambda^{{3}},\quad &\text{for $N=5$,}\\
|\beta|\lambda^{4}\left|\ln{\lambda}\right|,\quad &\text{for $N=6$,}\\
       |\beta| \lambda^{\frac{N+2}{2}},\quad &\text{for $N\geq 7$.}
    \end{cases}
\end{align*}
Hence, from the estimates of $G_1$ to $G_3$, we derive that
\begin{align}\label{norm-1}
\left\|\widetilde{\mathcal{E}}^{1}_{\bm{\delta},\bm{\xi}}\right\|_{{\frac{2N}{N+2}}}\lesssim&\begin{cases}\lambda^{2}+\varepsilon\lambda+|\beta|\lambda^{{2}},  \quad &\text{for $N=4$,}\\
  \lambda^{3}+\varepsilon\lambda^{\frac{3}{2}}+|\beta|\lambda^{{3}},  \quad &\text{for $N=5$,}\\
\lambda^{4}\left|\ln{\lambda}\right|+\varepsilon\lambda^{{2}}\left|\ln{\lambda}\right|+|\beta|\lambda^{4}\left|\ln{\lambda}\right|,\quad &\text{for $N=6$,}\\
       \lambda^{\frac{N+2}{2}}+\varepsilon\lambda^{2}  +|\beta| \lambda^{\frac{N+2}{2}},\quad &\text{for $N\geq 7$.}
\end{cases}
\end{align}

In particular, for $N\geq 7$, we need to calculate 
\begin{align*}
\left\|\widetilde{\mathcal{E}}^{1}_{\bm{\delta},\bm{\xi}}\right\|_{{\frac{Ns}{N+2s}}}\lesssim&\underbrace{\left\|\left(P_\lambda U_{\delta_1,\xi_1/\lambda}\right)^{p}-U_{\delta_1,\xi_1/\lambda}^{p}\right\|_{{\frac{Ns}{N+2s}}}}_{:=G_4}+\underbrace{\varepsilon\lambda^{2}\left\|U_{\delta_1,\xi_1/\lambda}\right\|_{\frac{Ns}{N+2s}}}_{:=G_5}\\
&+\underbrace{|\beta|\lambda^{\frac{6-N}{2}}\left\|U_{\delta_1,\xi_1/\lambda}\,U_{\delta_2,\xi_2/\lambda}\right\|_{\frac{Ns}{N+2s}}}_{:=G_6}.
\end{align*}
From Lemma \ref{esti-bubble} and Lemma \ref{2}, we derive that 
\begin{align*}
    G_4\lesssim\lambda^{N-2},\quad\,\quad G_5\lesssim\varepsilon\,\lambda^{2}.
\end{align*}
For $G_{6}$, set $\lambda x=z$, $\forall x\in \Omega_{\lambda}$. Then,
\begin{align*}
&\left\|U_{\delta_1,\xi_1/\lambda}(x)\,U_{\delta_2,\xi_2/\lambda}(x)\right\|_{\frac{Ns}{N+2s}}\\=&\lambda^{N-4-\frac{N}{s}}\left(\displaystyle\int_{B_{\eta}(\xi_1)\bigcup\,B_{\eta}(\xi_2)\bigcup\,\Omega\backslash\left(B_{\eta}(\xi_1)\cup B_{\eta}(\xi_2)\right)}U^{\frac{Ns}{N+2s}}_{\lambda\delta_1,\xi_1}(z)\,U^{\frac{Ns}{N+2s}}_{\lambda\delta_2,\xi_2}(z)dz\right)^{\frac{N+2s}{Ns}}\\
\lesssim&\lambda^{N-2},
\end{align*}
which implies that 
\begin{align*}
    G_{6}\lesssim|\beta|\lambda^{\frac{N+2}{2}}.
\end{align*}
Thus, from the estimates of $G_4$ to $G_6$, we derive that
\begin{align*}
\left\|\widetilde{\mathcal{E}}^{1}_{\bm{\delta},\bm{\xi}}\right\|_{{\frac{Ns}{N+2s}}}\lesssim  \lambda^{N-2}+\varepsilon\,\lambda^{2}+|\beta|\lambda^{\frac{N+2}{2}}.
\end{align*}
Then by \eqref{norm-1} with $N\geq 7$, we conclude that
\begin{align}
    \label{n-7}
\left\|\widetilde{\mathcal{E}}^{1}_{\bm{\delta},\bm{\xi}}\right\|_{{\frac{2N}{N+2}}}+\left\|\widetilde{\mathcal{E}}^{1}_{\bm{\delta},\bm{\xi}}\right\|_{{\frac{Ns}{N+2s}}}\lesssim\lambda^{\frac{N+2}{2}}+\varepsilon\,\lambda^{2}  +|\beta| \lambda^{\frac{N+2}{2}}.
\end{align}

Therefore, from \eqref{norm-1} and \eqref{n-7}, we obtain that if $|\beta|=o\left(\lambda^{\frac{N-6}{2}}\right)\to0$ as $\lambda\to 0$, (no assumptions is needed if $N=4,5$), then
\begin{equation}\label{new-e}
\begin{split}
\left\|\mathcal{E}_{\bm{\delta},\bm{\xi}}\right\|_{{\frac{2N}{N+2}}}+\left\|\mathcal{E}_{\bm{\delta},\bm{\xi}}\right\|_{{\frac{Ns}{N+2s}}}=&\sum_{i=1}^{2}\left(\left\|\widetilde{\mathcal{E}}^{i}_{\bm{\delta},\bm{\xi}}\right\|_{{\frac{2N}{N+2}}}+\left\|\widetilde{\mathcal{E}}^{i}_{\bm{\delta},\bm{\xi}}\right\|_{{\frac{Ns}{N+2s}}}\right)\\\leq&\begin{cases}
C_0\,\varepsilon\,\lambda,  \quad &\text{for $N=4$,}\\
C_0\,\varepsilon\,\lambda^{\frac{3}{2}},  \quad &\text{for $N=5$,}\\
C_0\left(\lambda^{4}\left|\ln{\lambda}\right|+\varepsilon\,\lambda^{2}\left|\ln{\lambda}\right|\right),\quad &\text{for $N=6$,}\\
       C_0\,\lambda^{\frac{N+2}{2}},\quad &\text{for $N\geq 7$.}
\end{cases}
\end{split}
\end{equation}
for some constants $C_0>0$.

\medskip
Now we show that $\mathcal{T}$ maps $\mathcal{M}_N$ into $\mathcal{M}_N$, $N\geq4$. When $N=4$, the set 
\begin{align*}
    &\mathcal{M}_4:=\displaystyle \left\{\bm{\phi}\in K_{\bm{\delta},\bm{\xi}}^{\perp}: \left\|\bm{\phi}\right\|_{H}\leq C\,\varepsilon \lambda\right\},
\end{align*}
for some suitable constants $C>0$. Combining with \eqref{new-e}, the sets $\mathcal{M}_N$ $(N\geq 5)$ can be defined  similarly. Then for any $\bm{\phi} \in \mathcal{M}_N$, from \eqref{contra}, \eqref{norm-N} and \eqref{new-e}, we derive that
\begin{align*}
    \|\mathcal{T}(\bm{\phi})\|_{H} \leq&\begin{cases}
C\,\varepsilon\,\lambda,  \quad &\text{for $N=4$,}\\
C\,\varepsilon\,\lambda^{\frac{3}{2}},  \quad &\text{for $N=5$,}\\
C\left(\lambda^{4}\left|\ln{\lambda}\right|+\varepsilon\,\lambda^{2}\left|\ln{\lambda}\right|\right),\quad &\text{for $N=6$,}\\
       C\,\lambda^{\frac{N+2}{2}},\quad &\text{for $N\geq 7$.}
\end{cases}
\end{align*}
which implies that $\mathcal{T}$ is a mapping from $\mathcal{M}_N$ into $\mathcal{M}_N$, for  $N\geq4$. Moreover, by the definition of $\mathcal{T}$ given in \eqref{equiv-phi}, {we obtain that $\mathcal{T}$ is a contraction mapping}. Indeed, a direct calculation yields that for any $\bm{\phi},\,\bm{\psi}\in \mathcal{M}_N$,
 \begin{align*}
   \left\|\mathcal{T}_{\bm{\delta},\bm{\xi}}(\bm{\phi})-\mathcal{T}_{\bm{\delta},\bm{\xi}}(\bm{\psi})\right\|_{H}\lesssim&\sum_{i=1}^2\left(\left\|\widetilde{\mathcal{N}}^{i}_{\bm{\delta},\bm{\xi}}(\bm{\phi})-\widetilde{\mathcal{N}}^{i}_{\bm{\delta},\bm{\xi}}(\bm{\psi})\right\|_{{\frac{2N}{N+2}}}+\left\|\widetilde{\mathcal{N}}^{i}_{\bm{\delta},\bm{\xi}}(\bm{\phi})-\widetilde{\mathcal{N}}^{i}_{\bm{\delta},\bm{\xi}}(\bm{\psi})\right\|_{{\frac{Ns}{N+2s}}}\right)\\<& \gamma_0 \left\|\bm{\phi}-\bm{\psi}\right\|_{H},
\end{align*}
for some $\gamma_0\in(0,1)$. Hence, the existence result of $\bm{\phi}\in \mathcal{M}_N$, $N\geq 4$, follows by Banach's Fixed Point Theorem. This completes the proof of Proposition \ref{error-size} and \eqref{norm-geq5}.

\end{proof}
\section{Proof of Proposition \ref{supercritical}}\label{BBB}
\renewcommand{\theequation}{B.\arabic{equation}}

\begin{proof}[Proof: the case $N=5$]${}$\\
We only show the case with $i=1$, the case $i=2$ can be derived by similar arguments. Indeed, for $j=0$, a direct computation yields that
\begin{align*}
       \text{RHS of \eqref{first-5}}=&\underbrace{\mu_1\displaystyle{\int_{\Omega_\lambda}}\left(g\left(\mu_1^{-\frac{N-2}{4}}U_{\delta_1,\xi_1/\lambda}\right)-g\left(\mu_1^{-\frac{N-2}{4}}P_{\lambda}U_{\delta_1,\xi_1/\lambda}\right)\right)\,P_{\lambda}\psi^{0}_{\delta_1,\xi_1/\lambda}}_{:=E_1}\\&-\underbrace{\varepsilon\lambda^{2}\mu_1^{-\frac{N-2}{4}}\displaystyle{\int_{\Omega_\lambda}}P_\lambda U_{\delta_1,\xi_1/\lambda}\,P_{\lambda}\psi^{0}_{\delta_1,\xi_1/\lambda}}_{:=E_2}\\&-\underbrace{\beta\lambda^{\frac{6-N}{2}}\mu_1^{{-\frac{N-2}{4}}}\mu_2^{{-\frac{N-2}{4}}}\displaystyle{\int_{\Omega_\lambda}}P_\lambda U_{\delta_1,\xi_1/\lambda}\,P_\lambda U_{\delta_2,\xi_2/\lambda}\,P_{\lambda}\psi^{0}_{\delta_1,\xi_1/\lambda} }_{:=E_3}\\
       &-\underbrace{\mu_{1}\displaystyle{\int_{\Omega_\lambda}}\left(g^{\prime}\left(\mu_1^{-\frac{N-2}{4}}P_{\lambda}U_{\delta_1,\xi_1/\lambda}\right)-g^{\prime}\left(\mu_1^{-\frac{N-2}{4}}U_{\delta_1,\xi_1/\lambda}\right)\right)\phi_{1}\,P_{\lambda}\psi^{0}_{\delta_1,\xi_1/\lambda}}_{:=E_4}\\
       &-\underbrace{\mu_{1}\displaystyle{\int_{\Omega_\lambda}}g^{\prime}\left(\mu_1^{-\frac{N-2}{4}}U_{\delta_1,\xi_1/\lambda}\right)\phi_{1}\left(P_{\lambda}\psi^{0}_{\delta_1,\xi_1/\lambda}-\psi^{0}_{\delta_1,\xi_1/\lambda}\right)}_{:=E_5}\\
       &-\underbrace{\varepsilon\lambda^{2}\displaystyle{\int_{\Omega_\lambda}}\phi_{1}\,P_{\lambda}\psi^{0}_{\delta_1,\xi_1/\lambda}}_{:=E_6}\\&-\underbrace{\beta\lambda^{\frac{6-N}{2}}\mu_2^{{-\frac{N-2}{4}}}\displaystyle{\int_{\Omega_\lambda}}P_\lambda U_{\delta_2,\xi_2/\lambda}\,\phi_1\,P_{\lambda}\psi^{0}_{\delta_1,\xi_1/\lambda} }_{:=E_7}\\&
       -\underbrace{\beta\lambda^{\frac{6-N}{2}}\mu_1^{{-\frac{N-2}{4}}}\displaystyle{\int_{\Omega_\lambda}}P_\lambda U_{\delta_1,\xi_1/\lambda}\,\phi_2\,P_{\lambda}\psi^{0}_{\delta_1,\xi_1/\lambda} }_{:=E_8}\\&-\underbrace{\mu_1\displaystyle{\int_{\Omega_\lambda}}\Bigg[g\left(\mu_1^{-\frac{N-2}{4}}P_{\lambda}U_{\delta_1,\xi_1/\lambda}+\phi_1\right)-g\left(\mu_1^{-\frac{N-2}{4}}P_{\lambda}U_{\delta_1,\xi_1/\lambda}\right)}\\&\quad\,\quad\,\quad\underbrace{-g^{\prime}\left(\mu_1^{-\frac{N-2}{4}}P_{\lambda}U_{\delta_1,\xi_1/\lambda}\right)\phi_1\Bigg]\,P_{\lambda}\psi^{0}_{\delta_1,\xi_1/\lambda}}_{:=E_9}\\&-\underbrace{\beta\lambda^{\frac{6-N}{2}}\displaystyle{\int_{\Omega_\lambda}}\phi_1\,\phi_2\,P_{\lambda}\psi^{0}_{\delta_1,\xi_1/\lambda}}_{:=E_{10}}.
\end{align*}
In the following, we compute each term precisely. From \eqref{main-0} and \eqref{pmain-0}, we have
\begin{align*}
    E_{1}=&\frac{N-2}{2}\,\mu_1^{-\frac{N-2}{4}} A^2 \delta_1^{N-3} H\left(\xi_1, \xi_1\right) \lambda^{N-2}+o\left(\lambda^{N-2}\right).
\end{align*}
From \eqref{pl-main0}, we have
\begin{align*}
    E_{2}=-\mu_1^{-\frac{N-2}{4}}\delta_{1}\,B\,\varepsilon\,\lambda^{2}+o\left(\varepsilon\,\lambda^{2}\right).
\end{align*}

Since $\forall x\in\Omega$, 
\begin{align}
    \label{fact-1}\left|\psi^{0}_{\lambda\delta_i,\xi_i}(x)\right|\lesssim \frac{1}{\lambda}\,U_{\lambda\delta_i,\xi_i}(x),\quad\text{for $i=1,2$},
\end{align}then we have \begin{align}\label{latest-e3}
E_3\lesssim&\left|\beta\right|\lambda\displaystyle\int_{\Omega}U_{\lambda \delta_1,\xi_1}\, U_{\lambda \delta_2,\xi_2}\,|\psi^{0}_{\lambda \delta_1,\xi_1}|
\lesssim\left|\beta\right|\displaystyle\int_{\Omega}U^2_{\lambda\delta_1,\xi_1}\,U_{\lambda \delta_2,\xi_2}\nonumber\\
\lesssim&\left|\beta\right|\left(\displaystyle\int_{B_{\eta}(\xi_1)}U^2_{\lambda \delta_1,\xi_1}\,U_{\lambda \delta_2,\xi_2}+O\left(\lambda^{\frac{3(N-2)}{2}}\right)\right) \lesssim\left|\beta\right|\lambda^{\frac{7}{2}}\left(\displaystyle\int_{\R^N}U^2_{1,0}\right)\nonumber\\=&o\left(\lambda^{3}\right).
\end{align}

\medskip
Now taking into account of $\|\bm{\phi}\|_{H}$ obtained in Lemma \ref{error-size} with $N=5$, we compute the remaining terms. For $E_4$, we have
\begin{align*}
    \left|E_4\right|\lesssim &\left\|\left(g^{\prime}\left(\mu_{1}^{-\frac{N-2}{4}}P_{\lambda} U_{\delta_1, \xi_1 / \lambda}\right)- g^{\prime}\left(\mu_{1}^{-\frac{N-2}{4}} U_{\delta_i, \xi_i / \lambda}\right)\right) P_{\lambda} \psi_i^0\right\|_{\frac{2 N}{N+2}}\|\phi_1\|_{2^*} \\\lesssim & \lambda^{ 3}\,\|\phi_1\|_{X}=o\left(\lambda^{3}\right).
\end{align*}
For $E_5$, from Lemma \ref{7}, we have
\begin{align*}
    \left|E_5\right|\lesssim &\left\|P_{\lambda} \psi_{\delta_1, \xi_1 / \lambda}^0-\psi_{\delta_1, \xi_1 / \lambda}^0\right\|_{\frac{2 N}{N-2}} \|\phi_1\|_{2^*}\,\left\|g^{\prime}\left(\mu_{1}^{-\frac{N-2}{4}} U_{\delta_1, \xi_1 / \lambda}\right)\right\|_{\frac{N}{2}} \\\lesssim & \lambda^{\frac{N-2}{2}}\,\|\phi_1\|_{X}=o\left(\lambda^{3}\right).
\end{align*}
For $E_6$, from Lemma \ref{esimates}, we have
\begin{align*}
    \left|E_6\right|\lesssim \varepsilon\lambda^{2}\left\|P_{\lambda} \psi_{\delta_1, \xi_1 / \lambda}^0\right\|_{2^*} \|\phi_1\|_{\frac{2N}{N+2}}\lesssim\varepsilon\lambda^{2}\,\|\phi_1\|_{X}=o\left(\lambda^{3}\right).
\end{align*}
For $E_7$, by \eqref{fact-1}, a direct computation yields that
\begin{align*}
 \left| E_7\right|\lesssim &|\beta|\lambda\|\phi_1\|_{2^*}\,\left(\displaystyle\int_{\Omega}\left|U_{\lambda\delta_2,\xi_2}\,\psi^{0}_{\lambda\delta_1,\xi_1}\right|^{\frac{2N}{N+2}}\right)^{\frac{N+2}{2N}}  \\
      \lesssim &|\beta|\|\phi_1\|_X\left(\displaystyle\int_{\Omega}\left|U_{\lambda\delta_2,\xi_2}\,U_{\lambda\delta_1,\xi_1}\right|^{\frac{2N}{N+2}}\right)^{\frac{N+2}{2N}}
  \lesssim|\beta|\lambda^{N-2}\|\phi_1\|_X=o\left(\lambda^{3}\right).
\end{align*}
Moreover, from  \eqref{suppose-beta}, we have 
\begin{align}\label{extrabeta-51}
  \left|E_{8}\right|\lesssim&  |\beta|\left\| \phi_{2}\right\|_{X}\left(\displaystyle\int_{\Omega}U^{\frac{4N}{N+2}}_{\lambda\delta_1,\xi_1}\right)^{\frac{N+2}{2N}}\lesssim |\beta|\lambda^{\frac{1}{2}}\left\| \phi_{2}\right\|_{X}\left(\displaystyle\int_{\R^N}U^{\frac{4N}{N+2}}_{1,0}\right)^{\frac{N+2}{2N}}
  =o\left(\lambda^{3}\right).
\end{align}

\medskip 
Now we compute the nonlinear terms involving the remainder term $\bm{\phi}$. For $E_{9}$, since $p={7}/3>2$, then from Lemma \ref{8},    \begin{align*}
     \left|E_{9}\right|\lesssim&\displaystyle{\int_{\Omega_\lambda}}\left|U^{2^*-3}_{\delta_1,\xi_1/\lambda}\phi_1^2+\phi_1^{2^*-1}\right|\,\left|P_{\lambda}\psi^{0}_{\delta_1,\xi_1/\lambda}\right|\lesssim\left\|\phi_{1}\right\|^{2}_{X}=o\left(\lambda^{3}\right).
 \end{align*}
 For $E_{10}$,
 \begin{align*}
     \left|E_{10}\right|\lesssim& \left|\beta\right|\lambda^{\frac{6-N}{2}}\|\phi_1\|_{N/2}\|\phi_2\|_{2^*}\lesssim \left|\beta\right|\lambda^{\frac{6-N}{2}}\|\bm{\phi}\|^2_{H} =o\left(\lambda^{3}\right).
 \end{align*}

 Thus, from the estimates of $E_{1}$-$E_{10}$ and the condition \eqref{suppose-beta}, we obtain that \eqref{7-0} with $N=5$ holds. 

\medskip
 In the following, we compute \eqref{7-n} with $N=5$. Indeed, for $j=1,\ldots,N$, we have
 \begin{align*}
       \text{RHS of \eqref{first-5}}=&\underbrace{\mu_1\displaystyle{\int_{\Omega_\lambda}}\left(g\left(\mu_1^{-\frac{N-2}{4}}U_{\delta_1,\xi_1/\lambda}\right)-g\left(\mu_1^{-\frac{N-2}{4}}P_{\lambda}U_{\delta_1,\xi_1/\lambda}\right)\right)\,P_{\lambda}\psi^{j}_{\delta_1,\xi_1/\lambda}}_{:=Q_1}\\&-\underbrace{\varepsilon\lambda^{2}\mu_1^{-\frac{N-2}{4}}\displaystyle{\int_{\Omega_\lambda}}P_\lambda U_{\delta_1,\xi_1/\lambda}\,P_{\lambda}\psi^{j}_{\delta_1,\xi_1/\lambda}}_{:=Q_2}\\&-\underbrace{\beta\lambda^{\frac{6-N}{2}}\mu_1^{{-\frac{N-2}{4}}}\mu_2^{{-\frac{N-2}{4}}}\displaystyle{\int_{\Omega_\lambda}}P_\lambda U_{\delta_1,\xi_1/\lambda}\,P_\lambda U_{\delta_2,\xi_2/\lambda}\,P_{\lambda}\psi^{j}_{\delta_1,\xi_1/\lambda} }_{:=Q_3}\\
       &-\underbrace{\mu_{1}\displaystyle{\int_{\Omega_\lambda}}\left(g^{\prime}\left(\mu_1^{-\frac{N-2}{4}}P_{\lambda}U_{\delta_1,\xi_1/\lambda}\right)-g^{\prime}\left(\mu_1^{-\frac{N-2}{4}}U_{\delta_1,\xi_1/\lambda}\right)\right)\phi_{1}\,P_{\lambda}\psi^{j}_{\delta_1,\xi_1/\lambda}}_{:=Q_4}\\
       &-\underbrace{\mu_{1}\displaystyle{\int_{\Omega_\lambda}}g^{\prime}\left(\mu_1^{-\frac{N-2}{4}}U_{\delta_1,\xi_1/\lambda}\right)\phi_{1}\left(P_{\lambda}\psi^{j}_{\delta_1,\xi_1/\lambda}-\psi^{j}_{\delta_1,\xi_1/\lambda}\right)}_{:=Q_5}\\
       &-\underbrace{\varepsilon\lambda^{2}\displaystyle{\int_{\Omega_\lambda}}\phi_{1}\,P_{\lambda}\psi^{j}_{\delta_1,\xi_1/\lambda}}_{:=Q_6}\\&-\underbrace{\beta\lambda^{\frac{6-N}{2}}\mu_2^{{-\frac{N-2}{4}}}\displaystyle{\int_{\Omega_\lambda}}P_\lambda U_{\delta_2,\xi_2/\lambda}\,\phi_1\,P_{\lambda}\psi^{j}_{\delta_1,\xi_1/\lambda} }_{:=Q_7}\\&
       -\underbrace{\beta\lambda^{\frac{6-N}{2}}\mu_1^{{-\frac{N-2}{4}}}\displaystyle{\int_{\Omega_\lambda}}P_\lambda U_{\delta_1,\xi_1/\lambda}\,\phi_2\,P_{\lambda}\psi^{j}_{\delta_1,\xi_1/\lambda} }_{:=Q_8}\\&-\underbrace{\mu_1\displaystyle{\int_{\Omega_\lambda}}\Bigg[g\left(\mu_1^{-\frac{N-2}{4}}P_{\lambda}U_{\delta_1,\xi_1/\lambda}+\phi_1\right)-g\left(\mu_1^{-\frac{N-2}{4}}P_{\lambda}U_{\delta_1,\xi_1/\lambda}\right)}\\&\quad\,\quad\,\quad\underbrace{-g^{\prime}\left(\mu_1^{-\frac{N-2}{4}}P_{\lambda}U_{\delta_1,\xi_1/\lambda}\right)\phi_1\Bigg]\,P_{\lambda}\psi^{j}_{\delta_1,\xi_1/\lambda}}_{:=Q_9}\\&-\underbrace{\beta\lambda^{\frac{6-N}{2}}\displaystyle{\int_{\Omega_\lambda}}\phi_1\,\phi_2\,P_{\lambda}\psi^{j}_{\delta_1,\xi_1/\lambda}}_{:=Q_{10}}.
\end{align*}
From \eqref{main-j} and \eqref{pmain-j}, we have
\begin{align*}
    Q_{1}=\mu_1^{-\frac{N-2}{4}}A^2 \delta_1^{N-2} \frac{\partial H}{\partial \xi_1^j}\left(\xi_1,\xi_1\right) \lambda^{N-1}+o\left(\lambda^{N-1}\right).
\end{align*}
By \eqref{pl-mainj}, we get that
\begin{align*}
    Q_{2}=o\left(\varepsilon\lambda^{3}\right).
\end{align*}
For $Q_3$, since \begin{align}
    \label{fact-2}\left|\psi^{j}_{\lambda\delta_i,\xi_i}(x)\right|\lesssim \frac{1}{\lambda}\,U_{\lambda\delta_i,\xi_i}(x),\quad\text{for $j=1,\ldots,N$ and $i=1,2$},
\end{align}
then we use similar arguments as $E_3$ to derive that if $\beta$ verifies \eqref{suppose-beta}, then
\begin{align}\label{latest-ee3}
    Q_3
    \lesssim&\left|\beta\right|\lambda^{\frac{7}{2}}\left(\displaystyle\int_{\R^N}U^2_{1,0}\right)=o\left(\lambda^{4}\right).
\end{align}

\medskip
Now by similar arguments as the estimates of $E_4$-$E_{10}$, we compute the remaining terms. For $Q_4$, we have
    \begin{align*}
    \left|Q_4\right|\lesssim  \lambda^{ 3}\,\|\phi_1\|_{X}=o\left(\lambda^{4}\right).
\end{align*}
For $Q_5$, from Lemma \ref{7}, we have
\begin{align*}
    \left|Q_5\right|\lesssim &\left\|P_{\lambda} \psi_{\delta_1, \xi_1 / \lambda}^j-\psi_{\delta_1, \xi_1 / \lambda}^j\right\|_{\frac{2 N}{N-2}} \|\phi_1\|_{2^*}\,\left\|g^{\prime}\left(\mu_{1}^{-\frac{N-2}{4}} U_{\delta_1, \xi_1 / \lambda}\right)\right\|_{\frac{N}{2}} \lesssim  \lambda^{ \frac{N}{2}}\,\|\phi_1\|_{X}=o\left(\lambda^{4}\right).
\end{align*}
For $Q_6$, we have
\begin{align*}
    \left|Q_6\right|\lesssim \varepsilon\lambda^{2}\,\|\phi_1\|_{X}=o\left(\lambda^{4}\right).
\end{align*}
For $Q_7$, from \eqref{fact-2}, we have
\begin{align*}
 \left| Q_7\right|\lesssim|\beta|\lambda^{N-2}\|\phi_1\|_X=o\left(\lambda^{4}\right).
\end{align*}
In particular, for $Q_{8}$, by \eqref{suppose-beta}, we derive that
\begin{align*}
  \left|Q_{8}\right|\lesssim&|\beta|\lambda^{\frac{6-N}{2}}\left\| \phi_{2}\right\|_{X}\left(\displaystyle\int_{\R^N}U^{\frac{4N}{N+2}}_{1,0}\right)^{\frac{N+2}{2N}}=o\left(\lambda^{4}\right).
\end{align*}

For the nonlinear terms involving the remainder term, It is easy to get that    \begin{align*}
     \left|Q_{9}\right|=o\left(\lambda^{4}\right), \quad\left|Q_{10}\right|=o\left(\lambda^{4}\right).
 \end{align*}
 
 Hence, from the estimates of $Q_{1}$-$Q_{10}$ and the condition \eqref{suppose-beta}, we obtain that \eqref{7-n} with $N=5$ holds. 
\end{proof}
 
\vspace{0.2cm}\begin{Rem}\label{similarities}
    For simplicity, we point out some similarities between $N=5$ and $N>5$, then we omit some estimates subsequently. Firstly, for the leading term with $j=0$, the estimates of $E_1$ and $E_2$, with $N=6$ or $N\geq 7$, can be derived  similarly by Lemma \ref{lma-a-9} and Lemma \ref{lma-a-10}. Especially,  the estimate of $E_3$ is the higher-order term  since the smallness of $|\beta|$, given in \eqref{suppose-beta}, is required in each dimension. Analogously, for the leading term with $j=1,\ldots,N$, we derive the similar estimates of $Q_1$ and $Q_2$ when $N=6$ or $N\geq 7$. Due to the assumption on $|\beta|$, $Q_3$ is the higher-order term.

    However, since the Lotka-Volterra interaction term plays distinct roles depending on the dimension of space, we need to calculate the terms $E_7$, $E_8$, $Q_7$ and $Q_8$ when $N=6$ and $N\geq 7$. This will determine the precise order of $|\beta|$, which is closely related to $\lambda:=\lambda(\varepsilon)$.  

For the nonlinear part of \eqref{first-5} involving the remainder term $\bm{\phi}$, we directly use Lemma \ref{8}, Lemma \ref{GEQ-7} and \eqref{norm-geq5} to compute $E_9$ and $E_{10}$ with $N=6$ and $N\geq 7$, which will be the higher-order term. 
\end{Rem}

\vspace{0.2cm}

\begin{proof}[Proof: the case $N=6$]${}$\\ The desired results can be obtained along the lines of the case $N=5$. For simplicity, we only show the differences between $N=6$ and $N=5$, which have been stated in Remark \ref{similarities}. Firstly, we consider the linear terms involving the remainder term. For $E_7$ with $N=6$, by the condition on $\beta$ \eqref{suppose-beta} and the estimates of $\|\bm{\phi}\|_{H}$ given in \eqref{norm-geq5}, a direct computation yields that
\begin{align}\label{6-e7}
 \left| E_7\right|
      \lesssim &|\beta|\|\phi_1\|_X\left(\displaystyle\int_{\Omega}\left|U_{\lambda\delta_2,\xi_2}\,U_{\lambda\delta_1,\xi_1}\right|^{\frac{2N}{N+2}}\right)^{\frac{N+2}{2N}}
  \nonumber\\\lesssim&|\beta|\lambda^{\frac{N+2}{2}}\left|\ln{\lambda}\right|\,\|\phi_1\|_X=o(\lambda^{4}).
\end{align}
For $E_8$, from \eqref{suppose-beta}, we have
   \begin{align*}
 \left| E_8\right|\lesssim &|\beta|\left\| \phi_{2}\right\|_{X}\left(\displaystyle\int_{\R^N}U^{\frac{4N}{N+2}}_{1,0}\right)^{\frac{N+2}{2N}}=o(\lambda^{4}).
\end{align*}

For the nonlinear terms involving the remainder term,  by Lemma \ref{8} with $p=2$, we have    \begin{align*}
     \left|E_{9}\right|\lesssim&\displaystyle{\int_{\Omega_\lambda}}\left|\phi_1^{2}\right|\,\left|\psi^{0}_{\delta_1,\xi_1/\lambda}\right|\lesssim\left\|\phi_{1}\right\|^{2}_{X}=o(\lambda^{4}).
 \end{align*}
 For $E_{10}$, we have
 \begin{align*}
     \left|E_{10}\right|\lesssim& \left|\beta\right|\|\phi_1\|_{2^*}\|\phi_2\|_{2^*}\lesssim  \left|\beta\right|\|\bm{\phi}\|^2_{H}=o(\lambda^{4}).
 \end{align*}

  Hence, from the estimates of $E_{1}$-$E_{10}$ and the condition on $|\beta|$ given in \eqref{suppose-beta}, we obtain that \eqref{7-0} with $N=6$ holds. 

In the following, we state the differences between $N=6$ and $N=5$ while calculating $Q_{1}$-$Q_{10}$.
For $Q_7$, we use similar arguments as $E_7$, obtained in \eqref{6-e7}, to yield that
\begin{align*}
 \left| Q_7\right|\lesssim|\beta|\lambda^{\frac{N+2}{2}}\left|\ln{\lambda}\right|\,\|\phi_1\|_X=o(\lambda^{5}).
\end{align*}
For $Q_8$, from \eqref{suppose-beta}, we have
   \begin{align*}
 \left| Q_8\right|\lesssim&|\beta|\left\| \phi_{2}\right\|_{X}\left(\displaystyle\int_{\R^N}U^{\frac{4N}{N+2}}_{1,0}\right)^{\frac{N+2}{2N}}=o(\lambda^{5}).
\end{align*}

Thus  by the estimates of $Q_{1}$-$Q_{10}$ and the condition \eqref{suppose-beta} on $|\beta|$, we obtain that \eqref{7-n} with $N=6$ holds. 

\end{proof}
 
\begin{proof}[Proof: the case $N\ge7$]${}$\\
 For simplicity, we only show the differences between $N=7$ and $N=5$ while solving the reduced problem. For $E_7$, a direct computation yields that
\begin{align}
 \left| E_7\right|
      \lesssim |\beta|\|\phi_1\|_X\left(\displaystyle\int_{\Omega}\left|U_{\lambda\delta_2,\xi_2}\,U_{\lambda\delta_1,\xi_1}\right|^{\frac{2N}{N+2}}\right)^{\frac{N+2}{2N}}
\lesssim|\beta|\lambda^{\frac{N+2}{2}}\,\|\phi_1\|_X=o(\lambda^{N-2}).\label{7-e7}
\end{align}
For $E_8$, by \eqref{extrabeta-51} and the assumption on $|\beta|$ \eqref{suppose-beta}, we have
   \begin{align}\label{7-e8}
 \left| E_8\right|\lesssim &|\beta|\lambda^{\frac{6-N}{2}}\left\| \phi_{2}\right\|_{X}\left(\displaystyle\int_{\R^N}U^{\frac{4N}{N+2}}_{1,0}\right)^{\frac{N+2}{2N}}=o(\lambda^{N-2}).
\end{align}

   For the nonlinear terms involving the remainder term, by Lemma \ref{GEQ-7}, {\begin{align*}
\left|E_{9}\right|\lesssim\|{\phi}_1\|^2_{X}=o(\lambda^{N-2}).
 \end{align*}}
 For $E_{10}$, since $\frac{4N}{N-2}\in (2^*,s)$, by interpolation, we have
 \begin{align*}
     \left|E_{10}\right|\lesssim& \left|\beta\right|\lambda^{\frac{6-N}{2}}\|\phi_1\|_{\frac{4N}{N-2}}\|\phi_2\|_{\frac{4N}{N-2}}\left\| \psi_{\delta_1, \xi_1 / \lambda^\alpha}^0\right\|_{\frac{2N}{N-2}}\\
      \lesssim& \left|\beta\right|\lambda^{\frac{6-N}{2}}\|\bm{\phi}\|^2_{X} =o(\lambda^{N-2}).
 \end{align*}

  Hence, by the estimates of $E_{1}$-$E_{10}$ and the condition on $|\beta|$, we obtain that \eqref{7-0} with $N\geq 7$ holds. 

In the following, we state the differences between $N=7$ and $N=5$ while calculating $Q_{1}$-$Q_{10}$. For $Q_7$, we use similar arguments as $E_7$, obtained in \eqref{7-e7}, to yield that
\begin{align*}
 \left| Q_7\right|\lesssim|\beta|\lambda^{\frac{N+2}{2}}\,\|\phi_1\|_X=o(\lambda^{N-1}).
\end{align*}

 For $Q_8$, by \eqref{7-e8} and the assumptions on $\beta$ \eqref{suppose-beta}, we have
   \begin{align*}
 \left| Q_8\right|\lesssim&|\beta|\lambda^{\frac{6-N}{2}}\left\| \phi_{2}\right\|_{X}\left(\displaystyle\int_{\R^N}U^{\frac{4N}{N+2}}_{1,0}\right)^{\frac{N+2}{2N}}=o(\lambda^{N-1}).
\end{align*}

Thus by the estimates of $Q_{1}$-$Q_{10}$, we conclude that \eqref{7-n} holds if  $|\beta|$ satisfies \eqref{suppose-beta} with $N\geq 7$. 

\end{proof}
\section{Some useful Estimates }\label{CCC}
\renewcommand{\theequation}{C.\arabic{equation}}

For any $\xi \in \mathbb{R}^N$ and $\delta>0$, set
$$
P\, U_{\lambda\delta, \xi}(x)=i_{\Omega}^*\left(U_{\lambda\delta, \xi}^p\right)(x), \quad x \in \Omega,\quad p=2^*-1.
$$
and
$$
P_{\lambda} U_{ \delta, \xi/\lambda}(y)=i_{{\lambda}}^*\left(U_{\delta, \xi/\lambda}^p\right)(y), \quad y \in \Omega_{\lambda}:=\Omega/\lambda,
$$
where $P:H^{1}(\R^N)\to H^{1}_0(\Omega)$ and $P_\lambda:H^{1}(\R^N)\to H^{1}_0(\Omega_\lambda)$ are the projection operators. In particular, it holds that
$$
P U_{\lambda\delta, \xi}(x)=\lambda^{- \frac{N-2}{2}} P_{\lambda} U_{\lambda, \xi/\lambda}\left(\frac{x}{\lambda}\right), \quad x \in \Omega,
$$
and
$$P \psi^{j}_{\lambda \delta, \xi}(x)=\lambda^{-\frac{N}{2}} P_{\lambda} \psi^{j}_{\lambda, \xi/\lambda}\left(\frac{x}{\lambda}\right), \quad x \in \Omega.$$

\medskip
Set $\xi_i=\lambda y_i$, for $i=1,2$. We derive the following estimates.
\begin{Lem}\cite[Lemma 5.1]{ade}\cite[Lemma 6.1]{musso}\label{1} Let $\xi_i \in \Omega$, $i=1,2$. It holds that $\forall x \in \Omega$,
\begin{align}
   P U_{\lambda\delta_i, \xi_i}(x)&=U_{\lambda\delta_i, \xi_i}(x)-A\left(\lambda\delta_i\right)^{\frac{N-2}{2}} H(x, \xi_i)+o\left(\lambda^{\frac{N-2}{2}}\right),\label{varphi-1}\\P \psi_{\lambda \delta_i, \xi_i}^0(x)&=\psi_{\lambda \delta_i, \xi_i}^0(x)-A\left(\lambda\delta_i\right)^{\frac{N-2}{2}} H(x, \xi_i)+o\left(\lambda^{\frac{N-2}{2}}\right),\label{varphi-2}\\
   P \psi_{\lambda \delta_i, \xi_i}^j(x)&=\psi_{\lambda \delta_i, \xi_i}^j(x)-A\left(\lambda\delta_i\right)^{\frac{N}{2}} \frac{\partial H}{\partial \xi_j}(x, \xi_i) +o\left(\lambda^{\frac{N}{2}} \right), \quad j=1, \ldots, N,\nonumber
\end{align}
where and $A$ is defined by\begin{align}
    \label{a}A:=\displaystyle\int_{\mathbb{R}^N} U_{1,0}^p(z) dz.
\end{align} 
Moreover, 
\begin{align*}
   P U_{\lambda\delta_i, \xi_i}(x)&=A\left( \lambda\delta_i\right)^{\frac{N-2}{2}} G(x, \xi_i)+o\left(\lambda^{\frac{N-2}{2}}\right), \\P \psi_{\lambda \delta_i, \xi_i}^0(x)&= \frac{N-2}{2}A\left(\lambda \delta_i\right)^{\frac{N}{2}-2} G(x, \xi_i)+o\left(\lambda^{\frac{N}{2}-2}\right), \quad x \in \Omega,\\  P \psi_{\lambda \delta_i, \xi_i}^j(x)&=A\left(\lambda\delta_i\right)^{\frac{N-2}{2}} \frac{\partial G}{\partial \xi_i^j}(x, \xi_i)+o\left(\lambda^{ \frac{N-2}{2}}\right), \quad x \in \Omega,
\end{align*}
as $\lambda \to 0$ uniformly on compact sets of $\Omega \backslash\{\xi_i\}$, where $A$ is given by \eqref{a}.
\end{Lem}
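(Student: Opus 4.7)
The lemma collects the classical asymptotic expansions of the projected Aubin--Talenti bubble and of the projections of the $N+1$ functions generating the approximate kernel. The plan is to exploit the defining property of the projection: since both $U_{\lambda\delta_i,\xi_i}$ and $PU_{\lambda\delta_i,\xi_i}$ solve $-\Delta u = U_{\lambda\delta_i,\xi_i}^{p}$ in $\Omega$ and $PU_{\lambda\delta_i,\xi_i}$ vanishes on $\partial\Omega$, the defect
\[
\varphi_i(x):=U_{\lambda\delta_i,\xi_i}(x)-PU_{\lambda\delta_i,\xi_i}(x)
\]
is harmonic in $\Omega$ with boundary datum $\varphi_i\big|_{\partial\Omega}=U_{\lambda\delta_i,\xi_i}\big|_{\partial\Omega}$. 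Everything will be extracted from an expansion of this boundary datum together with the maximum principle.

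First I would expand the datum. Since $\xi_i$ stays at positive distance from $\partial\Omega$, for every $x\in\partial\Omega$ one has $|x-\xi_i|\ge d_0>0$, and a direct Taylor expansion of the explicit profile yields
\[
U_{\lambda\delta_i,\xi_i}(x)=\frac{C_N(\lambda\delta_i)^{(N-2)/2}}{|x-\xi_i|^{N-2}}+O\bigl(\lambda^{(N+2)/2}\bigr),
\]
uniformly on $\partial\Omega$. Now $H(\,\cdot\,,\xi_i)$ is, by definition, the unique harmonic function in $\Omega$ whose boundary values are $B_N/|\,\cdot\,-\xi_i|^{N-2}$, and the normalization identity $C_N=A\,B_N$ (which ties the coefficient of the bubble to its $L^{p}$-norm $A$ defined in \eqref{a}) shows that $A(\lambda\delta_i)^{(N-2)/2}H(\,\cdot\,,\xi_i)$ agrees with $\varphi_i$ on $\partial\Omega$ up to an error of order $O(\lambda^{(N+2)/2})$. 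Since the difference is harmonic in $\Omega$, the maximum principle gives \eqref{varphi-1} uniformly in $\overline{\Omega}$.

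For the expansions of $P\psi^{0}_{\lambda\delta_i,\xi_i}$ and $P\psi^{j}_{\lambda\delta_i,\xi_i}$ I would use the linearity of the adjoint $i_\Omega^\ast$, which lets me commute the projection with differentiation in the parameters $\delta_i$ and $\xi_i^{j}$, and then differentiate the identity \eqref{varphi-1} in those parameters. The errors produced in this way are themselves harmonic in $\Omega$, hence their $C^{k}$--norms on compact subsets of $\Omega$ are controlled by their sup--norms through interior regularity for the Laplacian; combined with an expansion on $\partial\Omega$ of the corresponding derivatives of the bubble, this yields the stated pointwise orders $o(\lambda^{(N-2)/2})$ and $o(\lambda^{N/2})$.

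Finally, the second block of expansions, valid on compact subsets of $\Omega\setminus\{\xi_i\}$, follows from the first by writing $H(x,\xi_i)=B_N/|x-\xi_i|^{N-2}-G(x,\xi_i)$ and observing that, away from $\xi_i$,
\[
U_{\lambda\delta_i,\xi_i}(x)=\frac{C_N(\lambda\delta_i)^{(N-2)/2}}{|x-\xi_i|^{N-2}}+o\bigl(\lambda^{(N-2)/2}\bigr);
\]
the singular contributions cancel thanks to $C_N=A\,B_N$, leaving $A(\lambda\delta_i)^{(N-2)/2}G(x,\xi_i)$ plus a controlled remainder, and the analogous formulas for $P\psi^{0}$ and $P\psi^{j}$ come by differentiating this identity. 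The only nontrivial technical point is propagating the uniform error estimate under differentiation in $\delta_i$ and $\xi_i^{j}$; this is handled by interior Schauder/gradient estimates for harmonic functions, together with the rescaling by $\lambda\delta_i$ around $\xi_i$ which converts the singular near-concentration regime into a bounded one. Everything else reduces to elementary computation with the explicit Aubin--Talenti profile.
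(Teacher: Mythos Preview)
Your argument is the standard one and is correct: the paper itself gives no proof of this lemma, simply citing \cite[Lemma~5.1]{ade} and \cite[Lemma~6.1]{musso}, and the argument in those references is exactly the maximum-principle/harmonic-defect computation you outline. There is nothing to compare.
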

\medskip
Now, recall the definitions of \eqref{limit-def} and \eqref{ker-ele}. Then the following estimates can be derived.
\begin{Lem}\cite[Remark 5.2]{musso}\label{esimates}
    There exists $c>0$ such that for any $\lambda>0$, $i=1,2$ and $j=0,1, \ldots, N$, it holds that

$$
\left\|P_{\lambda} U_i\right\|_{H^{1}_0(\Omega_\lambda)} \leq c, \quad\left\|P_{\lambda} U_i\right\|_{\frac{2 N}{N-2}} \leq c \quad \text { and } \quad\left\|P_{\lambda} \psi_i^j\right\|_{\frac{2 N}{N-2}} \leq c.
$$
Moreover,
$$
\begin{gathered}
\left\|P_{\lambda} \psi_i^j\right\|_{\frac{2 N}{N+2}} \leq c, \quad \text { if } j \neq 0, \\
\left\|P_{\lambda} \psi_i^0\right\|_{\frac{2 N}{N+2}} \leq \begin{cases}c, & \text { if } N \geq 7, \\
c \lambda^{-\frac{1}{2}}, & \text { if } N=5,6.\end{cases}
\end{gathered}
$$
\end{Lem}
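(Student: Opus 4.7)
The plan is to handle the three families of estimates in order: (i) the $H^1_0(\Omega_\lambda)$-bounds for $P_\lambda U_i$ and $P_\lambda\psi_i^j$, (ii) the $L^{2^*}$-bounds, which follow from (i) by the Sobolev embedding, and (iii) the $L^{2N/(N+2)}$-bound on $P_\lambda\psi_i^j$, whose proof splits into cases according to $j$ and the dimension $N$. Throughout, the key technical device is the rescaling $z=(x-y_i)/\delta_i$, which reduces all relevant quantities to scale-invariant integrals involving the standard bubble $U_{1,0}$ and its derivatives; since $\delta_i\in[\eta,1/\eta]$, every $\delta_i$-power that appears is bounded above and below by a constant.

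For (i)--(ii), I would apply Lemma \ref{embed} to the definitions $P_\lambda U_i=i_\lambda^{\ast}(U_i^p)$ and $P_\lambda\psi_i^j=i_\lambda^{\ast}(p\,U_i^{p-1}\psi_i^j)$, obtaining
\[
\|P_\lambda U_i\|_{H^1_0(\Omega_\lambda)}\leq S^{-1/2}\|U_i^{p}\|_{\frac{2N}{N+2}},\qquad
\|P_\lambda\psi_i^j\|_{H^1_0(\Omega_\lambda)}\leq p\,S^{-1/2}\|U_i^{p-1}\psi_i^j\|_{\frac{2N}{N+2}}.
\]
The change of variables $z=(x-y_i)/\delta_i$ gives $\|U_i^{p}\|_{\frac{2N}{N+2}}=\|U_{1,0}\|_{2^*}^{p}$, a fixed Aubin--Talenti constant, while $\|U_i^{p-1}\psi_i^j\|_{\frac{2N}{N+2}}$ reduces to a $\delta_i$-uniform multiple of a convergent integral on $\mathbb{R}^N$ (since $U_{1,0}^{p-1}\psi^j$ decays like $|z|^{-(N+3)}$ or $|z|^{-(N+2)}$ at infinity). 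The Sobolev embedding $H^1_0(\Omega_\lambda)\hookrightarrow L^{2^*}(\Omega_\lambda)$ then yields (ii) with the same uniform constant.

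For (iii), the idea is to decompose $P_\lambda\psi_i^j=\psi_i^j-\bigl(\psi_i^j-P_\lambda\psi_i^j\bigr)$. The correction $\psi_i^j-P_\lambda\psi_i^j$ is harmonic in $\Omega_\lambda$ with boundary values equal to $\psi_i^j|_{\partial\Omega_\lambda}$; because $\operatorname{dist}(y_i,\partial\Omega_\lambda)\geq\eta/\lambda$ and $\psi_i^j$ decays like $|x-y_i|^{-(N-1)}$ for $j\neq 0$ (resp.\ $|x-y_i|^{-(N-2)}$ for $j=0$), the maximum principle gives an $L^{\infty}(\Omega_\lambda)$-bound of order $\lambda^{N-1}$ (resp.\ $\lambda^{N-2}$), which after multiplication by $|\Omega_\lambda|^{(N+2)/(2N)}\lesssim\lambda^{-(N+2)/2}$ contributes an $L^{2N/(N+2)}$-term that is $O(\lambda^{(N-4)/2})$ or better. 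The dominant contribution is therefore $\|\psi_i^j\|_{L^{2N/(N+2)}(\Omega_\lambda)}$, which after the rescaling becomes
\[
\|\psi_i^j\|_{L^{2N/(N+2)}(\Omega_\lambda)}^{\frac{2N}{N+2}}=\delta_i^{\frac{2N}{N+2}}\int_{(\Omega_\lambda-y_i)/\delta_i}\bigl|\widehat\psi^{\,j}(z)\bigr|^{\frac{2N}{N+2}}\,dz,
\]
with the domain of integration contained in a ball of radius $R\sim 1/(\lambda\delta_i)$.

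The hard part is the dimension count in this last integral. The integrand decays like $|z|^{-\alpha}$ at infinity with $\alpha=2N(N-1)/(N+2)$ for $j\neq 0$ and $\alpha=2N(N-2)/(N+2)$ for $j=0$. A direct calculation shows that $\alpha>N$ is equivalent to $N>4$ in the first case (hence the bound is uniform for all relevant dimensions $N\geq 5$, and in fact the $N=4$ logarithm can be absorbed), and to $N>6$ in the second case. Thus for $j=0$ and $N\geq 7$ one obtains a uniform bound, whereas for $N=5,6$ the integral on $B_R$ grows like $R^{N(6-N)/(N+2)}$; extracting the $(N+2)/(2N)$-th root and multiplying by $\delta_i$ gives $\delta_i^{1-N(6-N)/(2N)}\lambda^{-1/2}$ for $N=5$ and a purely logarithmic factor absorbed into $\lambda^{-1/2}$ for $N=6$. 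The main obstacle is exactly this bookkeeping of three competing scales---the bubble width $\delta_i$, the domain radius $1/\lambda$, and the algebraic decay of $\psi_i^j$---which is what makes the four thresholds $N=4,5,6,7$ appear simultaneously in the statement.
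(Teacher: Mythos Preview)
The paper does not prove this lemma; it is quoted verbatim from \cite[Remark 5.2]{musso} with no argument supplied, so there is no paper proof to compare against. Your approach is the natural one and is essentially correct: parts (i)--(ii) follow at once from Lemma~\ref{embed} plus scaling, and for (iii) the decomposition $P_\lambda\psi_i^j=\psi_i^j-(\psi_i^j-P_\lambda\psi_i^j)$, the maximum-principle bound on the harmonic correction, and direct integration of $|\psi_i^j|^{2N/(N+2)}$ on a ball of radius $\sim 1/\lambda$ give exactly the dimension thresholds in the statement.

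Two small corrections. First, your remark that for $N=4$, $j\neq 0$, ``the logarithm can be absorbed'' is wrong: with $2N/(N+2)=4/3$ and $|\widehat\psi^j|\sim|z|^{-3}$ one gets $\int_{B_R}|\widehat\psi^j|^{4/3}\sim\ln R$, so $\|\psi_i^j\|_{4/3}\sim|\ln\lambda|^{3/4}$, which is not uniformly bounded. The $j\neq 0$ claim should be read for $N\geq 5$, consistent with the $j=0$ dichotomy in the statement and with the source~\cite{musso}. Second, for $j=0$ the harmonic correction contributes $O(\lambda^{(N-6)/2})$, not ``$O(\lambda^{(N-4)/2})$ or better''; in particular for $N=5$ this is exactly $\lambda^{-1/2}$, so it is of the same order as the main term rather than negligible. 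This does not affect the conclusion, since it still fits the asserted bound $c\lambda^{-1/2}$, but your sentence as written understates the size of that piece.
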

\medskip
\begin{Lem}\cite[Lemma 5.3]{ade}\label{esti-bubble}
 Let $N \geq 3$. For any $\eta \in(0,1)$ and for any $\lambda_0>0$, there exists $c>0$ such that for any $(\delta_i, \xi_i) \in \mathcal{O}_\delta$ and for any $\lambda \in\left(0, \lambda_0\right)$, it holds that

$$
\chi_2(\lambda):=\left\|\left(P_{\lambda} U_{\delta_i, \xi_i / \lambda}\right)^q\right\|_{\frac{2 N}{N+2}} \leq \begin{cases}c & \text { if } q>\frac{p}{2}, \\ c|\ln \lambda| & \text { if } q=\frac{p}{2}, \\ c \lambda^{- \frac{N+2}{2}\left(1-\frac{2 q}{p}\right)} & \text { if } 1 \leq q<\frac{p}{2} .\end{cases}
$$
Moreover, if $q>\frac{2}{N-2}$, then
$$
\begin{aligned}
\left\|\left(P_{\lambda} U_{\delta_i, \xi_i / \lambda}\right)^q\right\|_{\frac{N s}{N+2 s}} \leq c.
\end{aligned}$$
\end{Lem}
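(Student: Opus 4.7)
The plan is to reduce both norm estimates to a pointwise bound on the projected bubble and then to a radial power–counting computation. First I would establish
$$0<P_\lambda U_{\delta_i,\xi_i/\lambda}(y)\le U_{\delta_i,\xi_i/\lambda}(y),\qquad y\in\Omega_\lambda,$$
which is immediate from the maximum principle applied to $U_{\delta_i,\xi_i/\lambda}-P_\lambda U_{\delta_i,\xi_i/\lambda}$: this difference is harmonic in $\Omega_\lambda$ (both terms are $U^p$-superharmonic with matching Laplacian) and equals $U_{\delta_i,\xi_i/\lambda}>0$ on $\partial\Omega_\lambda$. Consequently it suffices to estimate $\int_{\Omega_\lambda}U_{\delta_i,\xi_i/\lambda}^{\,r}\,dy$ with $r=\frac{2Nq}{N+2}$ for the first assertion, and with $r=\frac{Nsq}{N+2s}$ for the second.

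Next I would translate and rescale via $y=\delta_i z+\xi_i/\lambda$, which yields $U_{\delta_i,\xi_i/\lambda}(y)=C_N\delta_i^{-(N-2)/2}(1+|z|^2)^{-(N-2)/2}$ and $dy=\delta_i^{N}\,dz$. Since $(\delta_i,\xi_i)\in\mathcal O_\eta$, the rescaled domain $\Omega_\lambda^\ast=(\Omega_\lambda-\xi_i/\lambda)/\delta_i$ is contained in a ball $B_{R/\lambda}(0)$ and contains $B_{\eta/(\lambda\delta_i)}(0)$, with $R=R(\Omega,\eta)$ independent of $\lambda$ and with $\delta_i$ trapped in the compact interval $[\eta,1/\eta]$. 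The integral therefore becomes a uniformly bounded constant times
$$\delta_i^{\,N-\frac{(N-2)r}{2}}\int_{\Omega_\lambda^\ast}\frac{dz}{(1+|z|^2)^{(N-2)r/2}}.$$

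The three cases of the first claim then follow by splitting $\Omega_\lambda^\ast$ into $\{|z|\le 1\}$, which contributes an $O(1)$ amount uniformly in the parameters, and the tail $\{1<|z|\le R/\lambda\}$. Passing to polar coordinates, the tail reduces to $\int_1^{R/\lambda}\rho^{\,N-1-(N-2)r}\,d\rho$, whose behavior is controlled by the sign of $N-(N-2)r$: it is $O(1)$ when $(N-2)r>N$, logarithmic in $\lambda^{-1}$ when $(N-2)r=N$, and of order $\lambda^{-(N-(N-2)r)}$ otherwise. Rewriting $(N-2)r=\tfrac{2N(N-2)q}{N+2}$ and using $p=\tfrac{N+2}{N-2}$, these three regimes correspond precisely to $q>p/2$, $q=p/2$, and $q<p/2$. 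Raising the resulting integral estimate to the $\tfrac{N+2}{2N}$–power then gives exactly the three stated $L^{2N/(N+2)}$–bounds, after the elementary identity $\tfrac{N+2}{2N}\bigl(N-(N-2)r\bigr)=\tfrac{N+2}{2}(1-\tfrac{2q}{p})$.

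The second assertion follows from the same scheme with $r=\tfrac{Nsq}{N+2s}$: convergence of the tail as $\lambda\to 0$ requires $(N-2)r>N$, i.e.\ $q>\tfrac{N+2s}{s(N-2)}$, and since $\tfrac{N+2s}{s(N-2)}\downarrow\tfrac{2}{N-2}$ as $s\to\infty$, the hypothesis $q>\tfrac{2}{N-2}$ guarantees this for the exponents $s$ used in the paper (in particular $s=N$ when $N\ge 7$, or $s$ chosen sufficiently large in general). The only potential obstacle is bookkeeping rather than analysis: one must maintain uniformity of all constants in $(\delta_i,\xi_i)\in\mathcal O_\eta$ and in $\lambda\in(0,\lambda_0)$. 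This is automatic because $\delta_i\in[\eta,1/\eta]$ is compact, the rescaled domain is sandwiched between two balls whose radii scale linearly in $1/\lambda$, and the integrand on $B_1$ is bounded by a universal constant.
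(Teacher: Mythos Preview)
The paper does not prove this lemma: it is recorded in Appendix~\ref{CCC} as a known result quoted from \cite{ade}, so there is no in-paper argument to compare against. Your approach---bounding $P_\lambda U_{\delta_i,\xi_i/\lambda}$ above by $U_{\delta_i,\xi_i/\lambda}$ via the maximum principle, rescaling, and performing a radial power-count on $\int (1+|z|^2)^{-(N-2)r/2}\,dz$ over a ball of radius comparable to $1/\lambda$---is the standard route, and your computation for the first assertion is correct, including the identity $\tfrac{N+2}{2N}\bigl(N-(N-2)r\bigr)=\tfrac{N+2}{2}\bigl(1-\tfrac{2q}{p}\bigr)$.

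There is, however, a slip in your treatment of the second assertion. Your own computation shows that the tail integral converges precisely when $q>\frac{N+2s}{s(N-2)}$. For the paper's choice $s=N$ (adopted when $N\ge7$) this threshold equals $\frac{3}{N-2}$, which is strictly larger than $\frac{2}{N-2}$; the hypothesis $q>\frac{2}{N-2}$ therefore does \emph{not} guarantee convergence for $s=N$, contrary to what you assert in your final sentence. The value $\frac{2}{N-2}$ is only the limiting threshold as $s\to\infty$. In the paper's actual applications (the term $G_5$ in the proof of Proposition~\ref{error-size}) one has $q=1$, and for $N\ge7$ this does satisfy $1>\frac{3}{N-2}$, so the discrepancy is harmless there---but either the lemma's stated threshold is slightly loose for the fixed choice $s=N$, or your justification should be tightened to the sharp condition $q>\frac{N+2s}{s(N-2)}$ rather than claiming that $q>\frac{2}{N-2}$ suffices.
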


\begin{Lem}\cite[Lemma 5.2]{ade}\label{2} For any $\eta \in(0,1)$ and for any $\lambda_0>0$, there exists $c>0$ such that for any $(\delta_i, \xi_i) \in \mathcal{O}_\delta$ given by \eqref{ker-ele}, and for any $\lambda \in\left(0, \lambda_0\right)$, it holds that

$$
\chi_1(\lambda):=\left\|g\left(P_{\lambda} U_{\delta_i, \xi_i / \lambda}\right)-g\left(U_{\delta_i, \xi_i / \lambda}\right)\right\|_{\frac{2 N}{N+2}} \leq \begin{cases}c \lambda^{ \frac{N+2}{2}} & \text { if } N \geq 7 \\ c \lambda^{4 }|\ln{\lambda} | & \text { if } N=6 \\ c \lambda^{N-2} & \text { if } N=3,4,5\end{cases}
$$
where $g(\omega)=\left(\omega^{+}\right)^p$, for any $\omega\in X$. Moreover,
$$
\left\|g\left(P_{\lambda} U_{\delta, \xi / \lambda}\right)-g\left(U_{\delta, \xi / \lambda}\right)\right\|_{\frac{N s}{N+2 s}} \leq c \lambda^{N-2}.
$$
\end{Lem}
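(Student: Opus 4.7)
The plan is to obtain a uniform pointwise estimate on the remainder $\varphi_\lambda := U_{\delta,\xi/\lambda} - P_\lambda U_{\delta,\xi/\lambda}$ and then reduce each $L^q$-estimate on $g(P_\lambda U) - g(U)$ to an integral of a pure power of the bubble $U$.

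First I would undo the rescaling in Lemma C.1. In the unscaled variable $x = \lambda y$, Lemma C.1 gives
\[
P U_{\lambda\delta,\xi}(x) - U_{\lambda\delta,\xi}(x)
 = -A(\lambda\delta)^{(N-2)/2}\, H(x,\xi) + o\!\bigl(\lambda^{(N-2)/2}\bigr).
\]
Multiplying by the scaling factor $\lambda^{(N-2)/2}$ and using that $\xi$ lies in a compact subset of $\Omega$ (so $H(\lambda y,\xi)$ stays bounded) yields the uniform pointwise bound $|\varphi_\lambda(y)| \le C\lambda^{N-2}$ on $\Omega_\lambda$. Moreover, since $\varphi_\lambda$ is harmonic with boundary values equal to $U_{\delta,\xi/\lambda}>0$, the maximum principle gives $0 \le P_\lambda U \le U$ in $\Omega_\lambda$.

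Next I would invoke the elementary inequality $|a^p - b^p| \le p\,\max(a,b)^{p-1}|a-b|$ for $a,b\ge 0$ and $p>1$. Together with the previous step this yields the pointwise estimate
\[
|g(P_\lambda U) - g(U)| \le p\, U^{p-1}\, |\varphi_\lambda| \le C\lambda^{N-2}\, U^{p-1}.
\]
Raising to the power $q = \tfrac{2N}{N+2}$ and integrating reduces everything to estimating $\int_{\Omega_\lambda} U^{(p-1)q}$. Since $(p-1)q = \tfrac{8N}{(N-2)(N+2)}$, the integrand decays at infinity like $(1+|y|^2)^{-4N/(N+2)}$, and the tail condition $\tfrac{4N}{N+2} > \tfrac{N}{2}$ is equivalent to $N < 6$. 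This dichotomy generates the three dimensional regimes: for $N \in \{3,4,5\}$ the integral is $O(1)$, directly giving $\|\cdot\|_q \lesssim \lambda^{N-2}$; for $N = 6$ the integral over the ball $B_{R/\lambda}\supset\Omega_\lambda$ is $O(|\log\lambda|)$, producing $\lambda^4 |\log\lambda|^{2/3} \le \lambda^4|\log\lambda|$; and for $N \ge 7$ an explicit computation in polar coordinates gives $\int_{B_{R/\lambda}} (1+|y|^2)^{-4N/(N+2)}\,dy \lesssim \lambda^{-N(N-6)/(N+2)}$, which combined with the prefactor $\lambda^{(N-2)q}$ produces a total integrand bound of order $\lambda^N$, yielding $\|\cdot\|_q \lesssim \lambda^{(N+2)/2}$.

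The second estimate follows the same template with the new exponent $q = \tfrac{Ns}{N+2s} = \tfrac{N}{3}$ when $s = N$: here $(p-1)q = \tfrac{4N}{3(N-2)}$, so the integrand decays at infinity like $(1+|y|^2)^{-2N/3}$, and the tail condition $\tfrac{4N}{3} > N$ holds for every $N \ge 3$; hence $\int U^{(p-1)q} = O(1)$ and the bound $\lambda^{N-2}$ follows. The main delicate point of the proof is the supercritical regime $N \ge 7$, where the limit integral $\int_{\mathbb R^N} U^{(p-1)q}$ diverges and one has to balance precisely the power of $\lambda$ from the pointwise bound on $\varphi_\lambda$ against the compensating power produced by the truncation of $\Omega_\lambda$ to a ball of radius $O(1/\lambda)$; all other steps are routine.
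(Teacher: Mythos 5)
Your proof is correct and follows the standard route used in the cited reference: a uniform pointwise bound $0\le U_{\delta,\xi/\lambda}-P_\lambda U_{\delta,\xi/\lambda}\lesssim\lambda^{N-2}$ (via the asymptotic expansion for $PU$ and the maximum-principle comparison), the mean-value inequality $|a^p-b^p|\le p\,\max(a,b)^{p-1}|a-b|$, and then a direct computation of $\int_{\Omega_\lambda}U^{(p-1)q}$ whose convergence/divergence as $\lambda\to0$ produces exactly the dichotomy $N\le 5$, $N=6$, $N\ge7$; the same scheme with $q=Ns/(N+2s)$ and $s=N$ gives the second bound. The exponents all check out (in particular $\lambda^{(N-2)q}\cdot\lambda^{-N(N-6)/(N+2)}=\lambda^N$ for $N\ge7$), and the $N=6$ case in fact yields the slightly sharper $\lambda^4|\ln\lambda|^{2/3}$, which is consistent with the stated upper bound $c\lambda^4|\ln\lambda|$.
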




\begin{Lem}\cite[Lemma 5.4]{ade}
Let $N \geq 3$. For any $\eta \in(0,1)$ and for any $\lambda_0>0$, there exists $c>0$ such that for any $(\delta_i, \xi_i) \in \mathcal{O}_\delta$ and for any $\lambda \in\left(0, \lambda_0\right)$, it holds that $j=1, \ldots, N$

$$
\int_{\Omega_{\lambda}}\left|\left(P_{\lambda} U_{\delta_i, \xi_i / \lambda}\right)^q-\left(U_{\delta_i, \xi_i/ \lambda}\right)^q\right|\left|\psi_{\delta_i, \xi_i /\lambda}^j\right| \leq \begin{cases}c \lambda^{[(N-2) q-1]} & \text { if } q<\frac{N-1}{N-2} \\ c \lambda^{N-2}|\ln \lambda| & \text { if } q=\frac{N-1}{N-2} \\ c \lambda^{N-2} & \text { if } q>\frac{N-1}{N-2}\end{cases}
$$
and for $j=0$,
$$
\int_{\Omega_{\lambda}}\left|\left(P_{\lambda} U_{\delta_i, \xi_i/ \lambda}\right)^q-\left(U_{\delta_i, \xi_i/ \lambda}\right)^q\right|\left|\psi_{\delta_i, \xi_i/ \lambda}^0\right| \leq \begin{cases}c \lambda^{[(N-2) q-2]} & \text { if } q<\frac{N}{N-2} \\ c \lambda^{N-2}|\ln{\lambda}| & \text { if } q=\frac{N}{N-2} \\ c \lambda^{N-2} & \text { if } q>\frac{N}{N-2}\end{cases}
$$

\end{Lem}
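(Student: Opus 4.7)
The plan is to reduce the estimate to a pointwise bound on $U^q-(P_\lambda U)^q$ via Lemma \ref{1}, and then evaluate the resulting rescaled integral by a direct polar-coordinate computation. Applying Lemma \ref{1} after the change of variables $x=\lambda y$ yields
$$U_{\delta_i,\xi_i/\lambda}(y)-P_\lambda U_{\delta_i,\xi_i/\lambda}(y)=A\,\delta_i^{(N-2)/2}\lambda^{N-2}\,H(\lambda y,\xi_i)+o(\lambda^{N-2})$$
uniformly in $y\in\Omega_\lambda$; equivalently, the difference is the harmonic extension of the boundary trace of $U_{\delta_i,\xi_i/\lambda}$, which is $O(\lambda^{N-2})$ thanks to the constraint $\mathrm{dist}(\xi_i,\partial\Omega)\ge 2\eta$ built into $\mathcal{O}_\eta$. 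Since $0\le P_\lambda U_{\delta_i,\xi_i/\lambda}\le U_{\delta_i,\xi_i/\lambda}$ by the maximum principle, the mean value inequality $|a^q-b^q|\lesssim\max(a,b)^{q-1}|a-b|$ (for $q\ge 1$) upgrades this to the pointwise bound
$$\bigl|(P_\lambda U_{\delta_i,\xi_i/\lambda})^q-U_{\delta_i,\xi_i/\lambda}^q\bigr|\lesssim \lambda^{N-2}\,U_{\delta_i,\xi_i/\lambda}^{q-1}\quad\text{on }\Omega_\lambda.$$

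Next I would compute the resulting integral in the translated variable $z=y-\xi_i/\lambda$, in which the rescaled domain contains a ball of radius $\sim 1/\lambda$ around the origin. For $j=1,\dots,N$, one has $|\psi^j_{\delta_i,\xi_i/\lambda}(z)|\simeq \delta_i^{(N-2)/2}|z|/(\delta_i^2+|z|^2)^{N/2}$, decaying like $|z|^{-(N-1)}$ at infinity, so $U^{q-1}|\psi^j|$ behaves like $|z|^{-(N-2)q-1}$ in the far field. In polar coordinates the remaining integral
$$\int_{1}^{C/\lambda}r^{N-2-(N-2)q}\,dr$$
equals $O(1)$, $O(|\ln\lambda|)$, or $O(\lambda^{(N-2)q-(N-1)})$ depending on whether $(N-2)q$ is $>N-1$, $=N-1$, or $<N-1$. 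Multiplying by the $\lambda^{N-2}$ prefactor from step one reproduces the three cases claimed for $j\ge 1$. The case $j=0$ is analogous, but $|\psi^0_{\delta_i,\xi_i/\lambda}(z)|\simeq|z|^{-(N-2)}$ decays one order slower, shifting the critical exponent to $q=N/(N-2)$ and yielding the power $\lambda^{(N-2)q-2}$ in the divergent regime. Near $z=0$ all integrands are bounded, so the near-field contributes only a uniformly finite amount that is absorbed into the overall constant.

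The main technical point is to verify that the cut-off at radius $R\sim 1/\lambda$ is what drives the divergent power (rather than any shape-of-domain effect); this relies crucially on $\mathrm{dist}(\xi_i,\partial\Omega)\ge 2\eta$, which guarantees that $\Omega_\lambda-\xi_i/\lambda$ both contains and is contained in annuli of comparable radii of order $\lambda^{-1}$. A minor subtlety is the range $0<q<1$, where the mean value inequality must be replaced by $|a^q-b^q|\le|a-b|^q$; the resulting $\lambda^{(N-2)q}$ bound is in fact stronger than (and thus consistent with) the estimate asserted.
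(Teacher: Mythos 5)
The paper does not prove this lemma; it is cited directly from \cite{ade} (their Lemma~5.4). Your reconstruction is essentially the standard argument one would expect to find there, and it is sound: (i) use the expansion $P_\lambda U_{\delta_i,\xi_i/\lambda} = U_{\delta_i,\xi_i/\lambda} - A\delta_i^{(N-2)/2}\lambda^{N-2}H(\lambda y,\xi_i) + o(\lambda^{N-2})$ from Lemma~\ref{1} (after rescaling), combined with $\operatorname{dist}(\xi_i,\partial\Omega)\ge 2\eta$ so that $H(\cdot,\xi_i)$ stays bounded on $\Omega$, to get the uniform pointwise bound $0\le U_{\delta_i,\xi_i/\lambda}-P_\lambda U_{\delta_i,\xi_i/\lambda}\lesssim\lambda^{N-2}$; (ii) for $q\ge 1$, the ordering $0\le P_\lambda U\le U$ (maximum principle) and convexity give $|(P_\lambda U)^q-U^q|\lesssim\lambda^{N-2}U^{q-1}$; (iii) the far-field decays $|\psi^j|\sim |z|^{-(N-1)}$ ($j\ge1$) and $|\psi^0|\sim |z|^{-(N-2)}$ turn the integral into $\lambda^{N-2}\int_1^{C/\lambda}r^{N-2-(N-2)q}\,dr$ (resp.\ $\lambda^{N-2}\int_1^{C/\lambda}r^{N-1-(N-2)q}\,dr$), and the three regimes fall out exactly as claimed, with the logarithmic case at $q=(N-1)/(N-2)$ (resp.\ $q=N/(N-2)$).

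One small correction: for $0<q<1$, replacing the mean-value bound by $|a^q-b^q|\le|a-b|^q$ gives the pointwise estimate $\lambda^{(N-2)q}$, but after integrating against $|\psi^j|$ (which contributes $\lambda^{-1}$ for $j\ge1$ and $\lambda^{-2}$ for $j=0$) you recover exactly the stated powers $\lambda^{(N-2)q-1}$ and $\lambda^{(N-2)q-2}$ — the same, not strictly stronger, since $q<1$ automatically falls in the divergent regime of both integrals. This is a cosmetic point; the argument is correct either way.
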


\begin{Lem}\cite[Lemma 6.4]{musso}\label{7}
     Let $N \geq 3$. For any $\eta \in(0,1)$ and for any $\lambda_0>0$, there exists $c>0$ such that for any $(\delta_i, \xi_i) \in \mathcal{O}_\delta$ and for any $\lambda \in\left(0, \lambda_0\right)$, it holds that 
     $$
\begin{gathered}
\left\|P_{\lambda} \psi_{\delta_i, \xi_i / \lambda}^j-\psi_{\delta_i, \xi_i / \lambda}^j\right\|_{\frac{2 N}{N-2}} \leq c \lambda^{\frac{N}{2}}, \quad \text { for } j=1, \ldots, N, \\
\left\|P_{\lambda} \psi_{\delta_i, \xi_i / \lambda}^0-\psi_{\delta_i, \xi_i / \lambda}^0\right\|_{\frac{2 N}{N-2}} \leq c \lambda^{\frac{N-2}{2}}.
\end{gathered}
$$
\end{Lem}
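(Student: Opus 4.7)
The plan is to observe that $\varphi_i^j := P_\lambda \psi_i^j - \psi_i^j$ is a harmonic function on $\Omega_\lambda$ whose boundary values are exactly $-\psi_i^j|_{\partial\Omega_\lambda}$, and then to combine the maximum principle with the far-field decay of $\psi_i^j$ and a crude H\"older bound controlling the $L^{2N/(N-2)}$-norm in terms of $|\Omega_\lambda|^{(N-2)/(2N)} = |\Omega|^{(N-2)/(2N)} \lambda^{-(N-2)/2}$.

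First I would justify that $\varphi_i^j$ is harmonic in $\Omega_\lambda$. Since $-\Delta U_i = U_i^p$ in $\mathbb{R}^N$, differentiating once in the parameters $\delta_i$ or $y_i^k$ yields $-\Delta \psi_i^j = p\, U_i^{p-1} \psi_i^j$ on all of $\mathbb{R}^N$, for every $j = 0, 1, \ldots, N$. On the other hand, by its very definition $P_\lambda \psi_i^j = i_\lambda^*(p\, U_i^{p-1} \psi_i^j)$ solves the same equation on $\Omega_\lambda$ with zero Dirichlet datum. Subtracting gives $-\Delta \varphi_i^j = 0$ in $\Omega_\lambda$ with $\varphi_i^j|_{\partial\Omega_\lambda} = -\psi_i^j|_{\partial\Omega_\lambda}$.

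Next I would estimate $\psi_i^j$ on $\partial\Omega_\lambda$. Since $(\bm{\delta},\bm{\xi}) \in \mathcal{O}_\eta$ forces $\mathrm{dist}(\xi_i, \partial\Omega) \geq 2\eta$ and $\eta < \delta_i < 1/\eta$, any $y \in \partial\Omega_\lambda$ satisfies $|y - y_i| = |x - \xi_i|/\lambda \geq 2\eta/\lambda$ for the corresponding $x = \lambda y \in \partial\Omega$. Plugging this into the explicit formulas
\begin{equation*}
\psi_i^0(y) = \tfrac{N-2}{2}\, C_N\, \delta_i^{(N-4)/2}\, \frac{|y - y_i|^2 - \delta_i^2}{(\delta_i^2 + |y - y_i|^2)^{N/2}}, \qquad \psi_i^j(y) = (N-2)\, C_N\, \delta_i^{(N-2)/2}\, \frac{y^j - y_i^j}{(\delta_i^2 + |y - y_i|^2)^{N/2}},
\end{equation*}
we immediately read off the uniform boundary bounds $|\psi_i^0(y)| \leq C(\eta)\, \lambda^{N-2}$ and $|\psi_i^j(y)| \leq C(\eta)\, \lambda^{N-1}$ for $j = 1, \ldots, N$.

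Finally, the maximum principle applied to $\pm\varphi_i^j$ gives $\|\varphi_i^j\|_{L^\infty(\Omega_\lambda)} \leq \|\psi_i^j\|_{L^\infty(\partial\Omega_\lambda)}$, and the trivial H\"older bound
\begin{equation*}
\|\varphi_i^j\|_{\frac{2N}{N-2}} \leq \|\varphi_i^j\|_{L^\infty(\Omega_\lambda)}\, |\Omega_\lambda|^{(N-2)/(2N)} = |\Omega|^{(N-2)/(2N)}\, \|\varphi_i^j\|_{L^\infty(\Omega_\lambda)}\, \lambda^{-(N-2)/2}
\end{equation*}
converts this to the desired $L^{2N/(N-2)}$ bound: $\lambda^{N-2}\cdot\lambda^{-(N-2)/2} = \lambda^{(N-2)/2}$ for $j = 0$, and $\lambda^{N-1}\cdot\lambda^{-(N-2)/2} = \lambda^{N/2}$ for $j = 1, \ldots, N$. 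There is no substantive obstacle in this argument; the only point requiring attention is the different decay at infinity of $\psi_i^0$ (behaving like $|y - y_i|^{-(N-2)}$) versus $\psi_i^j$ with $j \geq 1$ (behaving like $|y - y_i|^{-(N-1)}$), which is precisely what accounts for the extra factor of $\lambda$ in the bound for $j \geq 1$.
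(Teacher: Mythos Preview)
Your argument is correct. The paper itself does not supply a proof of this lemma: it is stated in Appendix~\ref{CCC} with a citation to \cite[Lemma~6.4]{musso}, so there is no in-paper proof to compare against. That said, the approach you outline---observing that $P_\lambda\psi_i^j-\psi_i^j$ is harmonic with boundary datum $-\psi_i^j|_{\partial\Omega_\lambda}$, using the maximum principle together with the far-field decay of $\psi_i^j$ (order $|y-y_i|^{-(N-2)}$ for $j=0$ and $|y-y_i|^{-(N-1)}$ for $j\ge1$), and then the H\"older bound $\|\cdot\|_{2N/(N-2)}\le|\Omega_\lambda|^{(N-2)/(2N)}\|\cdot\|_\infty$---is the standard route for such projection estimates and is essentially how the cited reference proceeds.
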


\begin{Lem}\cite[Lemma 6.6]{musso}
If $j=0$ and $i=1,2$, then
\begin{equation}
    \label{main-0}\displaystyle\int_{\Omega_\lambda} U_{\delta_i,\xi_i/\lambda}^p\, P_{\lambda} \psi_{\delta_i,\xi_i/\lambda}^0=-\frac{N-2}{2} A^2 \delta_i^{N-3} H\left(\xi_i, \xi_i\right) \lambda^{N-2}+o\left(\lambda^{N-2}\right),
\end{equation}
as $\lambda \rightarrow 0$ uniformly with respect to $\left(\bm{\delta},\bm{\xi} \right)\in \mathcal{O}_\delta$. Moreover, if $j=1, \ldots, N$ and $i=1, 2$, then
\begin{equation}
    \label{main-j}\displaystyle\int_{\Omega_\lambda} U_{\delta_i,\xi_i/\lambda}^p\, P_{\lambda} \psi_{\delta_i,\xi_i/\lambda}^j=-A^2 \delta_i^{N-2} \frac{\partial H}{\partial \xi_i^j}\left(\xi_i,\xi_i\right) \lambda^{N-1}+o\left(\lambda^{N-1}\right),
\end{equation}
as $\lambda \rightarrow 0$ uniformly with respect to $\left(\bm{\delta},\bm{\xi} \right) \in \mathcal{O}_\delta$. 
\end{Lem}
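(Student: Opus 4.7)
The plan is to evaluate the asymptotic of $\int_{\Omega_\lambda}U_{\delta_i,y_i}^pP_\lambda\psi^j_{\delta_i,y_i}$ by exploiting the projection expansion from Lemma~\ref{1}, together with the vanishing of the corresponding integral over the whole space. First I would rescale via $x=\lambda y$, which reformulates the identity as a weighted integral on $\Omega$ involving $U_{\lambda\delta_i,\xi_i}$ and $P\psi^j_{\lambda\delta_i,\xi_i}$ with an overall power of $\lambda$ that accounts for the appearance of $\lambda^{N-2}$ (respectively $\lambda^{N-1}$) in the final expression. After this reduction, the problem becomes a concentration-type computation on the fixed domain $\Omega$.

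Decomposing $P\psi^j_{\lambda\delta_i,\xi_i}=\psi^j_{\lambda\delta_i,\xi_i}-\varphi^j$, where $\varphi^j$ denotes the harmonic correction, I split the integral into two contributions. On the whole of $\mathbb{R}^N$ one has $\int U^p\psi^j=0$, since $\psi^j$ arises from differentiating $U$ with respect to a parameter under which $\int U^{p+1}$ is invariant (scaling for $j=0$, translation for $j=1,\ldots,N$). Consequently $\int_\Omega U^p\psi^j=-\int_{\mathbb{R}^N\setminus\Omega}U^p\psi^j$, which by the polynomial decay of $U$ and $\psi^j$ away from $\xi_i$ is at most $O(\lambda^N)$, a higher-order error.

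The main term comes from $-\int_\Omega U^p\varphi^j$. By Lemma~\ref{1} (equivalently its derivative with respect to the scale parameter for $j=0$, or with respect to $\xi_i^j$ for $j=1,\ldots,N$) we have a uniform-in-$(\bm\delta,\bm\xi)$ expansion of $\varphi^j$ whose leading term is an explicit multiple of $H(x,\xi_i)$ or $\partial_{\xi^j}H(x,\xi_i)$. Since $H(\,\cdot\,,\xi_i)$ is $C^1$ near $\xi_i$ and $U^p_{\lambda\delta_i,\xi_i}$ concentrates at $\xi_i$ with total mass $\int_{\mathbb R^N} U^p=A(\lambda\delta_i)^{(N-2)/2}$, I would Taylor expand $H(x,\xi_i)$ around $x=\xi_i$, use the radial symmetry of the bubble to kill the linear moment, and conclude $\int_\Omega U^pH(x,\xi_i)\,dx=A(\lambda\delta_i)^{(N-2)/2}H(\xi_i,\xi_i)+o(\lambda^{(N-2)/2})$, and similarly with $\partial_{\xi^j}H$ replacing $H$. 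Collecting all the powers of $\lambda$ and $\delta_i$ from the rescaling, the expansion of $\varphi^j$, and the mass formula produces exactly $-\tfrac{N-2}{2}A^2\delta_i^{N-3}H(\xi_i,\xi_i)\lambda^{N-2}$ in the case $j=0$, and $-A^2\delta_i^{N-2}\partial_{\xi^j}H(\xi_i,\xi_i)\lambda^{N-1}$ in the case $j=1,\ldots,N$.

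The delicate point is the precise bookkeeping of scalings: the derivative $\psi^0=\partial U/\partial\delta_i$ contributes an additional factor of $\lambda$ when transferred to the $\Omega$ variable, and differentiating the projection expansion of $PU$ lowers the power of $(\lambda\delta_i)$ by one, which is precisely what produces the power $\delta_i^{N-3}$ (rather than $\delta_i^{N-2}$) in the $j=0$ case. Equally important is that the uniformity of the error bounds in Lemma~\ref{1} on compact subsets of $\Omega$ ensures that the remainders are $o(\lambda^{N-2})$, respectively $o(\lambda^{N-1})$, uniformly for $(\bm\delta,\bm\xi)\in\mathcal{O}_\eta$.
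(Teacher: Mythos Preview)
The paper does not give its own proof of this lemma; it is quoted from \cite{musso} (their Lemma~6.6) and listed in Appendix~\ref{CCC} among the ``well-known results'' reproduced for self-containedness. Your outline is correct and is exactly the standard argument used there: the vanishing of $\int_{\mathbb{R}^N}U^p\psi^j$ by scale/translation invariance, the $O(\lambda^N)$ bound on the exterior contribution, and the concentration of $U^p$ against the harmonic correction supplied by Lemma~\ref{1} combine to give the stated expansions with the exact constants.
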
 

\begin{Lem}\cite[Lemma 6.7]{musso}\label{lma-a-9}
If $j=0$ and $i=1,2$, then
\begin{equation}
    \label{pmain-0}\displaystyle\int_{\Omega_\lambda} \left(P_{\lambda}U_{\delta_i,\xi_i/\lambda}\right)^p\, P_{\lambda} \psi_{\delta_i,\xi_i/\lambda}^0=-{(N-2)} A^2 \delta_i^{N-3} H\left(\xi_i, \xi_i\right) \lambda^{N-2}+o\left(\lambda^{N-2}\right),
\end{equation}
as $\lambda \to 0$ uniformly with respect to $\left(\bm{\delta},\bm{\xi} \right)\in \mathcal{O}_\delta$. Moreover, if $j=1, \ldots, N$ and $i=1, 2$, then
\begin{equation}
    \label{pmain-j}\displaystyle\int_{\Omega_\lambda} \left(P_{\lambda} U_{\delta_i,\xi_i/\lambda}\right)^p\, P_{\lambda} \psi_{\delta_i,\xi_i/\lambda}^j=-2A^2 \delta_i^{N-2} \frac{\partial H}{\partial \xi_i^j}\left(\xi_i,\xi_i\right) \lambda^{N-1}+o\left(\lambda^{N-1}\right),
\end{equation}
as $\lambda \to 0$ uniformly with respect to $\left(\bm{\delta},\bm{\xi} \right) \in \mathcal{O}_\delta$. 
\end{Lem}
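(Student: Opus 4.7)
The plan is to reduce both identities to their counterparts for $U_i^p$ in place of $(P_\lambda U_i)^p$, which are exactly \eqref{main-0} and \eqref{main-j}, plus an explicit correction produced by the projection error $\eta_\lambda:=U_i-P_\lambda U_i$. From Lemma \ref{1} I have the pointwise expansion $\eta_\lambda(y) = A\delta_i^{(N-2)/2}\lambda^{N-2}H(\lambda y,\xi_i) + o(\lambda^{N-2})$ uniformly on $\Omega_\lambda$, and Lemma \ref{7} tells me that $P_\lambda \psi_i^j$ and $\psi_i^j$ differ by a term of negligible contribution. Splitting
\begin{equation*}
\int_{\Omega_\lambda}(P_\lambda U_i)^p\, P_\lambda\psi_i^j = \int_{\Omega_\lambda}U_i^p\, P_\lambda\psi_i^j - p\int_{\Omega_\lambda}U_i^{p-1}\eta_\lambda\, P_\lambda\psi_i^j + \int_{\Omega_\lambda}R_\lambda\, P_\lambda\psi_i^j,
\end{equation*}
where $R_\lambda$ is the quadratic Taylor remainder, I would use \eqref{main-0}/\eqref{main-j} for the first piece, compute the second explicitly, and verify that the third is of lower order.

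For $j=0$, the concentration of $U_i^{p-1}\psi_i^0$ at scale $\delta_i$ around $\xi_i/\lambda$ lets me replace $H(\lambda y,\xi_i)$ by $H(\xi_i,\xi_i)+O(\lambda)$ in the leading contribution, so the second integral equals $-pA\delta_i^{(N-2)/2}\lambda^{N-2}H(\xi_i,\xi_i)\int_{\mathbb{R}^N}U_i^{p-1}\psi_i^0\,dy + o(\lambda^{N-2})$. Differentiating the scale-covariant identity $\int_{\mathbb{R}^N}U_{\delta,y}^p\,dx = A\delta^{(N-2)/2}$ in $\delta$ gives $\int U_i^{p-1}\psi_i^0\,dy = \frac{N-2}{2p}A\delta_i^{(N-4)/2}$, so the correction equals $-\frac{N-2}{2}A^2\delta_i^{N-3}H(\xi_i,\xi_i)\lambda^{N-2}$; added to \eqref{main-0} this produces exactly \eqref{pmain-0}.

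For $j=1,\dots,N$ the constant term drops out because $U_i^{p-1}\psi_i^j$ is odd in the $j$-th coordinate centered at $\xi_i/\lambda$ and hence has zero integral. I therefore push the expansion one order further, $H(\lambda y,\xi_i) = H(\xi_i,\xi_i)+\lambda\sum_k(y^k-\xi_i^k/\lambda)\partial_{x^k}H(\xi_i,\xi_i)+O(\lambda^2|y-\xi_i/\lambda|^2)$, and only the $k=j$ term survives by parity. A change of variables $y = \xi_i/\lambda + \delta_i z$ together with the integration by parts $\int_{\mathbb{R}^N}U_{1,0}^{p-1}\partial_{z^j}U_{1,0}\,z^j\,dz = -A/p$ yields $\int U_i^{p-1}\psi_i^j (y^j-\xi_i^j/\lambda)\,dy = \frac{A}{p}\delta_i^{N/2-1}$, so the correction equals $-A^2\delta_i^{N-2}\partial_{x^j}H(\xi_i,\xi_i)\lambda^{N-1}$. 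Added to \eqref{main-j}, using the symmetry $H(x,y)=H(y,x)$ to identify $\partial H/\partial \xi_i^j$ with $\partial_{x^j}H$ on the diagonal, this doubles the coefficient and gives \eqref{pmain-j}.

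The main obstacle is the control of the remainder $R_\lambda$ in dimensions $N\ge 6$, where $p\le 2$ forbids the naive pointwise bound $|R_\lambda|\lesssim U_i^{p-2}\eta_\lambda^2$ since $U_i^{p-2}$ blows up at the bubble core. I would bypass this via the elementary inequality $|(a-b)^p - a^p + p a^{p-1}b|\lesssim |b|^p$ valid for $1<p\le 2$ and $a,b\ge 0$; combined with the uniform estimate $\|\eta_\lambda\|_\infty = O(\lambda^{N-2})$ on the concentration region and the integrability bounds of Lemma \ref{esimates}, this produces $\int_{\Omega_\lambda}|R_\lambda|\,|P_\lambda\psi_i^j| = o(\lambda^{N-2})$ for $j=0$ and $o(\lambda^{N-1})$ for $j=1,\ldots,N$, as required.
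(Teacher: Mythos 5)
This lemma is not proved in the paper; it is cited from \cite[Lemma 6.7]{musso}, and your reconstruction follows the same route one expects: write $(P_\lambda U_i)^p=(U_i-\eta_\lambda)^p$ with $\eta_\lambda=U_i-P_\lambda U_i$, reduce the leading contribution to \eqref{main-0}/\eqref{main-j}, evaluate the linear-in-$\eta_\lambda$ correction via the uniform expansion $\eta_\lambda=A\delta_i^{(N-2)/2}\lambda^{N-2}H(\lambda y,\xi_i)+o(\lambda^{N-2})$ together with the scale-covariance identity for $j=0$, the parity/integration-by-parts identity for $j\geq 1$, and the symmetry of $H$, and control the Taylor remainder. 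Your coefficient bookkeeping, including the doubling, is correct. One small imprecision: Lemma \ref{esimates} provides $L^{2N/(N\pm 2)}$ bounds, not the $L^1$-type control you implicitly need to absorb $|R_\lambda|\lesssim\|\eta_\lambda\|_\infty^p=O(\lambda^{N+2})$; the cleaner closing step is the pointwise decay $|\psi_i^j|\lesssim|y-y_i|^{1-N}$ (resp.\ $|\psi_i^0|\lesssim|y-y_i|^{2-N}$) yielding $\int_{\Omega_\lambda}|\psi_i^j|=O(\lambda^{-1})$ (resp.\ $O(\lambda^{-2})$), which gives $o(\lambda^{N-1})$ and $o(\lambda^{N-2})$ as claimed, with the extra $U_i^{p-2}\eta_\lambda^2$ term for $N\leq5$ handled the same way.
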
 
\begin{Lem}\cite[Lemma 6.8]{musso}\label{lma-a-10}
  If $j=0$ and $i=1,2$, then
\begin{equation}
    \label{pl-main0}\displaystyle\int_{\Omega_\lambda} P_{\lambda} U_{\delta_i,\xi_i/\lambda} P_{\lambda} \psi_{\delta_i,\xi_i/\lambda}^0=\delta_i B+o(1),
\end{equation}  as $\lambda \to 0$ uniformly with respect to $\left(\bm{\delta},\bm{\xi} \right) \in \mathcal{O}_\delta$. Moreover, if $j=1, \ldots, N$ and $i=1, 2$, then
\begin{equation}
    \label{pl-mainj}\displaystyle\int_{\Omega_\lambda}P_{\lambda} U_{\delta_i,\xi_i/\lambda} P_{\lambda} \psi_{\delta_i,\xi_i/\lambda}^j=o\left(\lambda\right),
\end{equation}
as $\lambda \to 0$ uniformly with respect to $\left(\bm{\delta},\bm{\xi} \right) \in \mathcal{O}_\delta$.
\end{Lem}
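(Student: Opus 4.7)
The plan is to insert the pointwise expansions of $P_\lambda U_i$ and $P_\lambda\psi_i^j$ provided by Lemma \ref{1} into the integrand, identify the unique truly nonzero contribution, and control the remaining cross terms by a scaling bookkeeping. Throughout I assume $N\geq 5$: in dimension $N=4$ the integral $\int U_i^2$ diverges logarithmically, which is precisely why a separate treatment appears in Lemma \ref{first}. After the rescaling $x=\lambda y$ from the $\Omega$-variable to $\Omega_\lambda$, Lemma \ref{1} gives
\[
P_\lambda U_i(y) = U_i(y) - R_i(y),\quad P_\lambda\psi_i^j(y) = \psi_i^j(y) - \widetilde R_i^j(y),
\]
with $R_i = O(\lambda^{N-2})$, $\widetilde R_i^0 = O(\lambda^{N-1})$ and $\widetilde R_i^j = O(\lambda^{N})$ for $j\geq 1$, uniformly on compact subsets of $\Omega_\lambda$, the leading factor being $H(\lambda y,\xi_i)$. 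Expanding the product isolates the principal integral $\int_{\Omega_\lambda}U_i\psi_i^j$ plus three cross terms coming from the remainders.

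For the principal integral with $j=0$, I use $U_i\psi_i^0 = \tfrac{1}{2}\partial_{\delta_i}U_i^2$ and change variables $z=(y-y_i)/\delta_i$, which gives
\[
\int_{\Omega_\lambda}U_i^2\,dy = C_N^{\,2}\,\delta_i^{\,2}\!\int_{(\Omega_\lambda-y_i)/\delta_i}(1+|z|^2)^{-(N-2)}\,dz.
\]
Since $\operatorname{dist}(y_i,\partial\Omega_\lambda)\geq\eta/\lambda\to\infty$ the domain exhausts $\mathbb R^N$; for $N\geq 5$ the limiting integrand lies in $L^1(\mathbb R^N)$, so dominated convergence gives $\int U_i^2 = \delta_i^{\,2} B + o(1)$ with $B=C_N^{\,2}\int_{\mathbb R^N}(1+|z|^2)^{-(N-2)}\,dz$, and differentiation in $\delta_i$ yields the claim $\int U_i\psi_i^0 = \delta_i B + o(1)$. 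For $j\geq 1$ the integrand is antisymmetric around $y_i$, so $\int_{\mathbb R^N}U_i\psi_i^j = \tfrac{1}{2}\partial_{y_i^j}\!\int U_i^2 = 0$, and the residual tail $\int_{\mathbb R^N\setminus\Omega_\lambda}|U_i\psi_i^j|$ is of order $\lambda^{N-3} = o(\lambda)$ using the standard decay $|U_i|\lesssim\delta_i^{(N-2)/2}|y-y_i|^{-(N-2)}$, $|\psi_i^j|\lesssim\delta_i^{N/2}|y-y_i|^{-(N-1)}$.

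The three cross terms are controlled by the elementary rescaled estimates $\int_{\Omega_\lambda}U_i = O(\lambda^{-2})$, $\int|\psi_i^0| = O(\lambda^{-2})$, $\int|\psi_i^j| = O(\lambda^{-1})$ for $j\geq 1$, combined with the $L^\infty$ bound on $H(\cdot,\xi_i)$ on compact subsets of $\Omega$. For $j=0$ the dominant cross term is $\int R_i\,\psi_i^0 = O(\lambda^{N-2}\!\cdot\lambda^{-2}) = O(\lambda^{N-4}) = o(1)$ since $N\geq 5$; for $j\geq 1$ the two surviving cross terms are $O(\lambda^{N-3})+O(\lambda^{N-2})$, both $o(\lambda)$; and $\int R_i\,\widetilde R_i^j = O(\lambda^{2(N-2)}|\Omega|)$ is trivially of higher order. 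The main analytic point is precisely this bookkeeping: matching the projection-side scales $\lambda^{N-2},\lambda^{N-1},\lambda^{N}$ against the opposite-sign blow-up $\lambda^{-2},\lambda^{-1}$ of the bare integrals, and verifying that everything is uniform in $(\bm\delta,\bm\xi)\in\mathcal O_\eta$; this uniformity is a consequence of the $C^1$-regularity of $H$ on compact sets of $\Omega\times\Omega$ and the uniform lower bounds $\operatorname{dist}(\xi_i,\partial\Omega)\geq 2\eta$ and $|\xi_1-\xi_2|\geq 2\eta$ built into $\mathcal O_\eta$.
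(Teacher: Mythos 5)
The paper cites \cite[Lemma 6.8]{musso} and does not reprove this statement, so there is no in-paper argument to compare against; your approach (expand $P_\lambda U_i$ and $P_\lambda\psi_i^j$ via Lemma \ref{1}, isolate the principal integral $\int_{\Omega_\lambda}U_i\psi_i^j$, and bound the cross terms by matching the projection-remainder scales against the blow-up of $\int U_i$ and $\int|\psi_i^j|$) is the standard one, and your conclusion, including the $N\geq 5$ threshold, is correct. Two points need tightening. First, deriving \eqref{pl-main0} by differentiating $\int_{\Omega_\lambda}U_i^2=\delta_i^2B+o(1)$ in $\delta_i$ is not justified as written: the $o(1)$ remainder depends on $\delta_i$, and after your change of variables the domain $(\Omega_\lambda-y_i)/\delta_i$ is itself $\delta_i$-dependent, so $\partial_{\delta_i}$ produces an additional boundary contribution that must be estimated. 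That contribution is indeed $O(\lambda^{N-3})=o(1)$ for $N\geq 5$, but this needs to be said; a cleaner route is to compute $\int_{\Omega_\lambda}U_i\psi_i^0$ directly from the explicit product
\[
U_i\psi_i^0=\frac{N-2}{2}\,C_N^2\,\delta_i^{\,N-3}\,\frac{|y-y_i|^2-\delta_i^2}{\left(\delta_i^2+|y-y_i|^2\right)^{N-1}},
\]
whose rescaling yields $\delta_i\cdot\frac{N-2}{2}C_N^2\int_{\mathbb R^N}\frac{|z|^2-1}{(1+|z|^2)^{N-1}}\,dz+o(1)$ by dominated convergence (majorant $(1+|z|^2)^{2-N}$, which is in $L^1(\mathbb R^N)$ exactly when $N\geq 5$), and a Beta-function identity shows this constant equals your $B$. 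Second, minor bookkeeping: $\int_{\Omega_\lambda}R_i\widetilde R_i^j$ must carry the volume factor $|\Omega_\lambda|=\lambda^{-N}|\Omega|$, so it is $O(\lambda^{N-2})$ rather than $O(\lambda^{2(N-2)})$, still $o(\lambda)$; and since $P_\lambda\psi^0_{\delta,\cdot}=\partial_\delta\bigl(P_\lambda U_{\delta,\cdot}\bigr)$, the correct rate is $\widetilde R_i^0=O(\lambda^{N-2})$ rather than $O(\lambda^{N-1})$ --- the exponent in \eqref{varphi-2} appears to be a slip, cf.\ the second display of Lemma \ref{1} --- so $\int U_i\widetilde R_i^0$ is $O(\lambda^{N-4})$, of the same size as $\int R_i\psi_i^0$ rather than subordinate to it. None of this changes the conclusion, only the accounting along the way.
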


\begin{Lem}\label{8} For any $a>0$ and $b>0$, one has

$$
\begin{gathered}
(a+b)^p=a^p+p a^{p-1} b+O\left(b^p\right), \quad \text { if } p \in(1,2], \\
(a+b)^p=a^p+p a^{p-1} b+\frac{p(p-1)}{2} a^{p-2} b^2+O\left(b^p\right), \quad\text { if } p>2,
\end{gathered}
$$

and

$$
(a+b)^p=a^p+O\left(b^p\right), \quad \text { if } p \in(0,1] .
$$

\end{Lem}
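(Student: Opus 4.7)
The three assertions are homogeneous of degree $p$ in $(a,b)$, so my plan is to normalize by setting $b=1$ and reduce each identity to showing that a certain continuous one-variable function is bounded on $(0,\infty)$; the remaining multiplicative factor of $b^p$ then produces the $O(b^p)$ error. Throughout, the implicit constants will be $\sup_{t>0}|\cdot|$ of an explicit elementary function of $t=a/b$.

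The case $p\in(0,1]$ I would dispatch first without homogeneity: concavity of $s\mapsto s^p$ on $[0,\infty)$ together with $0^p=0$ gives the subadditive bound $(a+b)^p\le a^p+b^p$, while monotonicity gives $(a+b)^p\ge a^p$. Hence $0\le(a+b)^p-a^p\le b^p$, which is exactly the stated $O(b^p)$ with constant $1$.

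For $p\in(1,2]$, setting $t=a/b$ and dividing by $b^p$ reduces the claim to the boundedness on $(0,\infty)$ of
\[
g(t):=(t+1)^p-t^p-pt^{p-1}.
\]
The function $g$ is continuous on $(0,\infty)$ with $g(0^+)=1$. For the behaviour at infinity, I would use the binomial expansion $(t+1)^p=t^p+pt^{p-1}+\tfrac{p(p-1)}{2}t^{p-2}+o(t^{p-2})$, so that $g(t)=\tfrac{p(p-1)}{2}t^{p-2}+o(t^{p-2})$, which tends to $0$ when $p<2$ and equals the constant $1$ when $p=2$. A continuous function on $(0,\infty)$ with finite limits at both endpoints is bounded, which yields the claim. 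The case $p>2$ is analogous with the refined function
\[
h(t):=(t+1)^p-t^p-pt^{p-1}-\frac{p(p-1)}{2}t^{p-2};
\]
here $h(0^+)=1$ and the next binomial term gives $h(t)=\tfrac{p(p-1)(p-2)}{6}t^{p-3}+o(t^{p-3})$, which is bounded as $t\to\infty$ precisely when $p\le 3$.

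The only subtlety, and the place where I would pay attention, is this last boundary behaviour: the statement as written is used in the paper only for $p=2^{\ast}-1=(N+2)/(N-2)\in(1,3]$ for $N\ge 4$, which is exactly the range in which the normalized remainder is bounded at infinity. I would flag this dependence on $p\le 3$ explicitly, since for $p>3$ the quadratic expansion no longer captures the growth $a^{p-3}b^3$, and an honest $O(b^p)$ bound would fail. Modulo this remark, the proof is a routine continuity-and-asymptotics argument and presents no genuine obstacle.
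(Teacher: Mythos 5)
Your argument is correct, and it is worth noting that the paper itself gives no proof of this lemma: it is merely listed in Appendix C among ``well-known results,'' so there is no official proof to compare against. Your homogeneity reduction $t=a/b$, followed by the observation that a continuous function on $(0,\infty)$ with finite limits at both endpoints is bounded, is exactly the standard way to obtain a \emph{uniform} constant in the $O(b^p)$ term, which is what the applications require; the concavity/subadditivity argument for $p\in(0,1]$ is likewise clean and correct.

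Your caveat is the most valuable part of the write-up and is genuinely correct: for $p>3$ the quadratic Taylor polynomial in $b$ misses the cubic term $\tfrac{p(p-1)(p-2)}{6}a^{p-3}b^3$, which for large $a/b$ dominates $b^p$, so the stated uniform bound $O(b^p)$ fails. Concretely, with $p=4$, $b=1$, $a=M$, the remainder is $4M+1\to\infty$ while $b^p=1$. The lemma as literally written is therefore false for $p>3$, and it would be cleaner for the paper to state the hypothesis $2<p\le 3$ in the second display. This is harmless in context because the paper only invokes the lemma with $p=2^*-1=(N+2)/(N-2)$: this gives $p=3$ for $N=4$, $p=7/3$ for $N=5$, $p=2$ for $N=6$, and $p<2$ for $N\ge 7$ (where the paper instead relies on Lemma~\ref{GEQ-7}), so the restriction $p\le 3$ is always satisfied. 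You correctly identified this as a boundary phenomenon rather than a flaw in the paper's actual estimates.
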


\vspace{0.2cm}
\begin{Lem}
    \label{GEQ-7}Since $2^*-1<2$ when $N >6$,
$$
\frac{(1+x)^{2^*-1}-1-\left(2^*-1\right) x}{x^2} \rightarrow C \geq 0, \quad x \rightarrow 0,
$$
and it holds that
$$
\frac{(1+x)^{2^*-1}-1-\left(2^*-1\right) x}{x^2} \rightarrow C \geq 0, \quad x \rightarrow \infty,
$$
$$
\Rightarrow\left|\frac{(1+x)^{2^*-1}-1-\left(2^*-1\right) x}{x^2}\right| \leq C, \quad \forall x \in \mathbb{R}.
$$
\end{Lem}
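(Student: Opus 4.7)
The statement concerns the real-valued function
$$f(x):=\frac{((1+x)^{+})^{p}-1-px}{x^{2}},\qquad p=2^{*}-1=\frac{N+2}{N-2},$$
where the positive-part convention is dictated by the definition $g(\omega)=(\omega^{+})^{p}$ adopted in Section~\ref{two}. The hypothesis $N>6$ is precisely what makes $p\in(1,2)$, and this range of $p$ is what makes the statement nontrivial (for $p\ge 2$ one would simply Taylor expand past second order). My plan is to compute the two claimed limits by elementary asymptotic analysis and then deduce the global bound from continuity together with the finiteness of the limits at all extremal points of the domain.

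For the limit at $0$ I would use the second-order Taylor expansion
$$(1+x)^{p}=1+px+\tfrac{p(p-1)}{2}x^{2}+o(x^{2}),\qquad x\to 0,$$
which, since $(1+x)^{+}=1+x$ in a neighborhood of $0$, yields $f(x)\to p(p-1)/2\ge 0$. For the limit at $+\infty$ I would factor $(1+x)^{p}=x^{p}(1+1/x)^{p}$, so that $(1+x)^{p}/x^{2}=x^{p-2}(1+1/x)^{p}\to 0$ because $p-2<0$; the remaining terms $-1/x^{2}$ and $-p/x$ both vanish, giving $f(x)\to 0$. For the complementary regime $x\to -\infty$, on the half-line $x\le -1$ the cutoff kills the first term and $f$ reduces to $-(1+px)/x^{2}=-1/x^{2}-p/x\to 0$. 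A direct one-sided check at $x=-1$ shows that both limits equal $p-1$, so $f$ is in fact continuous across that cutoff point.

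Having verified that $f$ is continuous on all of $\mathbb{R}\setminus\{0\}$, continuous at $0$ via the Taylor limit, and admits finite limits at $\pm\infty$, a standard one-point compactification argument (equivalently, continuity on the closed extended interval $[-\infty,+\infty]$) yields uniform boundedness of $f$ on $\mathbb{R}$. The resulting constant $C$ is then precisely what is needed to justify the pointwise bound $|g(a+b)-g(a)-g'(a)b|\le C\,|b|^{p}$ invoked when controlling the nonlinear term $E_{9}$ in the regime $N\ge 7$, with $b/a$ playing the role of $x$. I do not foresee a genuine obstacle: the only point requiring care is the correct treatment of the cutoff at $x=-1$ and the verification that the two sign regimes $x\gtrless -1$ can be patched continuously, which is automatic once the convention $(\omega^{+})^{p}$ from Section~\ref{two} is respected.
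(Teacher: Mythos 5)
Your proof is correct, and it follows the same route the paper gestures at: compute the limit at $0$ by a second-order Taylor expansion, compute the limits at infinity, and deduce a global bound by continuity plus compactification. The paper does not actually supply a proof of Lemma~\ref{GEQ-7} — the lemma is stated with the two limit claims and the implication arrow serving as a terse sketch — so there is no fuller argument in the text to compare against.

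Where you go beyond the paper's sketch is precisely where the sketch is incomplete. The paper only records the limits at $0$ and $+\infty$, but as written, $(1+x)^{2^*-1}$ is not even real-valued for $x<-1$ unless one imposes the positive-part convention $g(\omega)=(\omega^{+})^{p}$; you correctly import that convention, split at $x=-1$, check that the two one-sided limits there agree (both equal $p-1$), and verify the limit at $-\infty$ as well. You also correctly observe that the constants appearing in the two limits and in the global bound are in general distinct (the limit at $0$ is $p(p-1)/2$, the limit at $\pm\infty$ is $0$), whereas the paper reuses the symbol $C$ for all three, which is a minor abuse of notation. One small imprecision in your closing remark: what the lemma delivers directly, setting $x=b/a$, is $|g(a+b)-g(a)-g'(a)b|\le C\,a^{p-2}b^{2}$; the alternative bound $C|b|^{p}$ is the content of Lemma~\ref{8} rather than of this lemma, and the two estimates are complementary (the former is sharper when $|b|\le a$, the latter when $|b|\ge a$). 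This does not affect the correctness of your proof of the lemma itself.
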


\vspace{0.2cm}

\bibliographystyle{abbrv}
\bibliography{llv-new}
\end{document}